\newcommand{\rvline}{\hspace*{-\arraycolsep}\vline\hspace*{-\arraycolsep}}
\mathchardef\ordinarycolon\mathcode`\:
\renewcommand*\env@matrix[1][\arraystretch]{%
	\edef\arraystretch{#1}%
	\hskip -\arraycolsep
	\let\@ifnextchar\new@ifnextchar
	\array{*\c@MaxMatrixCols c}}
\newtheorem{theorem}{Theorem}[section]
\newtheorem{proposition}[theorem]{Proposition}
\newtheorem*{notation}{Notation}
\newtheorem{corollary}{Corollary}[theorem]
\theoremstyle{definition}
\newtheorem{definition}[theorem]{Definition}
\newtheorem{remark}[theorem]{Remark}
\newtheorem{exam}[theorem]{Example}
\newcommand{\F}{\mathcal{F}}
\newcommand{\G}{\mathcal{G}}
\newcommand{\R}{\mathcal{R}}
\newcommand{\V}{\mathcal{V}}
\newcommand{\E}{\mathcal{E}}
\DeclareMathOperator{\lcm}{lcm}
\title[]{A combinatorial approach to study subshifts associated with multigraphs}
\author[Nikita Agarwal]{Nikita Agarwal}
\address{Department of Mathematics\\
	Indian Institute of Science Education and Research Bhopal\\
	Bhopal Bypass Road, Bhauri \\
	Bhopal 462 066, Madhya Pradesh\\
	India}
\email{nagarwal@iiserb.ac.in}
\author[Haritha Cheriyath]{Haritha Cheriyath}
\address{School of Mathematics\\
	Tata Institute of Fundamental Research\\
	Mumbai 400 005, Maharashtra\\
	India}
\email{harithacheriyath@gmail.com}
\author[Sharvari Neetin Tikekar]{Sharvari Neetin Tikekar}
\address{School of Mathematics\\
	Tata Institute of Fundamental Research\\
	Mumbai 400 005, Maharashtra\\
	India}
\email{sharvari.tikekar@gmail.com}
\date{\today}
\begin{document} 
	\maketitle

	\begin{abstract} 
		A subshift of finite type over finitely many symbols can be described as a collection of all infinite walks on a digraph with at most a single edge from a vertex to another. The associated finite set $\F$ of forbidden words is a constraint which determines the language of the shift entirely. In this paper, in order to describe infinite walks on a multigraph, we introduce the notion of multiplicity of a word (finite walk) and define repeated words as those having multiplicity at least $2$. In general, for given collections $\F$ of forbidden words and $\R$ of repeated words with pre-assigned multiplicities, we define notion of a generalized language which is a multiset. We obtain a subshift associated with $\F$ and $\R$ such that its entropy is calculated using the generalized language. We also study the relationship between the language of this subshift and the generalized language. We then obtain a combinatorial expression for the generating function that enumerates the number of words of fixed length in this generalized language. This gives the Perron root and eigenvectors of the adjacency matrix with integer entries associated to the underlying multigraph. Using this, the topological entropy and an alternate definition of Parry measure for the associated edge shift are obtained. We also discuss some properties of Markov measures on this subshift.
	\end{abstract}
	
	\noindent \textbf{Keywords}: Subshift of finite type, edge shift, Perron root and eigenvectors, Markov measure, correlation polynomial, generating function\\
	\noindent \textbf{2020 Mathematics Subject Classification}: 37B10 (Primary); 68R15, 05A16 (Secondary)
	
	\section{Introduction}
	Symbolic dynamics is mainly used as a tool to study dynamical systems that exhibit some kind of hyperbolicity such as Anosov diffeomorphisms or Axiom A maps~\cite{MH,Ruelle_book,Smale,Hadamard}. It involves studying the collection of infinite sequences on a given set of symbols known as a \textit{shift space} and is quite combinatorial in nature. A shift space which is characterized in terms of a finite collection of words that are forbidden, is popularly known as a \textit{subshift of finite type}. For instance, the collection of all infinite paths on a given digraph describes a subshift of finite type. The complexity of this subshift is given by the connectivity of this graph, which is therefore, given by the spectral properties of the associated adjacency matrix. When a graph does not have multiple edges from a vertex to another, any path can be represented in terms of its vertices as well as its edges. Such shifts are, in particular, known as the \textit{vertex} and \textit{edge} shifts, respectively. Shift spaces commonly appear, albeit with different terminologies, in other areas of sciences as well. For instance, in coding and information theory, the finite collection of words that are forbidden is called a \textit{constraint} and the corresponding subshift is known as the \textit{constrained coding}, refer~\cite{Coding_book, semiconstrainedTom, semocontrainedTom2,MR2032296}.
		
	A directed graph containing multiple edges between some pair of initial and terminal vertices is known as a {\it multigraph} and its associated adjacency matrix has non-negative integer entries. In this case, two distinct paths may have identical representation in terms of vertices. Hence, to avoid such a discrepancy, distinct labels are assigned to each edge. Each path is then described in terms of labelled edges and associated edge shifts are considered. Some dynamical properties of the edge shifts are studied in~\cite{Lind,Williams}. For instance, Williams~\cite{Williams}, using the concept of Markov partitions, models a class of dynamical systems as edge shifts associated with non-negative integer matrices via conjugacy. They also provide a complete characterization of these systems in terms of their adjacency matrices. Although any edge shift can be thought of as a vertex shift with respect to some other digraph, it is more convenient and useful to consider edge shift since it makes the associated adjacency matrices
	closed under multiplication. It also often reduces the size of the adjacency matrix significantly, but at the expense of adding multiple edges. Hence the study of edge shifts is important to understand several other complex dynamical systems. 
	
	An important invariant of shift spaces under conjugacy is {\it topological entropy}, which measures the growth rate of the number of words of finite length in the shift, and can be computed for a wide class of shifts. Computing entropy also gives rise to some interesting combinatorial problems, as it involves counting the number of finite words. However, there is not much literature on counting the finite words in an edge shift, especially when the edge shift is associated to some multigraph. This constitutes one of the problems that we aim to address in this work. Recent studies have proved that multigraphs provide a more suitable and efficient framework to model real life systems, including financial risk models~\cite{applicationFRM} and software architecture~\cite{onegraph_software}. A simple example would be of a world wide web multigraph model, where a vertex represents a webpage and a directed edge represents a hyperlink (arrowhead matrices). A certain class of directed multigraphs associated with arrowhed matrices are studied in~\cite{hubdirectedmulti}. 
	
	

	In this paper, we focus on multigraphs and present an alternate interpretation of edge shifts as follows. Instead of assigning distinct labels to all the directed edges between a fixed pair of initial and terminal vertices in a multigraph, we consider only a single edge and associate a number with it, called as {\it multiplicity} of an edge. The multiplicity represents the number of all the edges between the fixed pair of initial and terminal vertices. In this way, each edge can be represented uniquely by its initial and terminal vertices. 
	Since we keep track of the multiple occurrence of each edge through its multiplicity, information on the total number of paths is preserved. At the same time, the collection of symbols is reduced which makes the computations easier. 
	
	One of the fundamental results in the study of spectral properties of matrices is the well-known \textit{Perron Frobenius theorem}. It guarantees the existence of a maximal real simple eigenvalue of any irreducible non-negative matrix. This result has plenty of applications in various branches of mathematics, as well as in other disciplines such as network theory and engineering, see~\cite{Brin,Dembele21,Hawkins_book21,Keyfitz_book,NM,Parry64,Penner,BCL2021}. The Perron Frobenius theorem for matrices is useful in studying spectral and thermodynamic properties of shift spaces with associated binary adjacency matrix (a real matrix with each entry either 0 or 1). Guibas and Odlyzko~\cite{Guibas} give formulas for enumeration of words of given length over some symbol set which avoid certain set of words in terms of correlation polynomials of these words. Using these combinatorial techniques, an expression for the Perron root (the largest eigenvalue) and Perron eigenvectors of the binary adjacency matrix are obtained in~\cite{Parry}. These results have several independent applications in other areas including symbolic dynamics, game theory, information theory and network theory, some of which are given in~\cite{ABBG18,ABR20,Perron1,Perron_book}. Building upon the work of Guibas and Odlyzko, in this paper, we develop similar combinatorial tools by considering a modified set of rules in terms of multiple occurrence and non-occurrence of certain words. We then employ these results to study the spectral and thermodynamic properties of edge shifts associated with a non-negative integer matrix. 
	
	\subsection*{Organization of the paper}
	
	In Section~\ref{sec:SFT_binary}, we discuss some preliminaries on subshifts of finite type (SFT) associated to a binary adjacency matrix and some known results in this setup. In Section~\ref{sec:SFT_general}, we consider a collection $\F$ of forbidden words and $\R$ of repeated words having multiplicities at least $2$, with symbols from a finite set $\Sigma$. We introduce the concept of \emph{generalized language} denoted by $\Lambda^{p}$ associated to $\F$ and $\R$ where multiplicity of any word is computed in terms of multiplicities of words from $\R$. Here $p$ is the length of the longest word in $\F \cup \R$. We derive a necessary and sufficient condition on $\F$ and $\R$ such that $\Lambda^p$ becomes the language of an edge shift associated to some non-negative integer matrix. Moreover, in case when $\Lambda^{p}$ is not a language, there exists a non-negative integer matrix such that the language of the edge shift associated to this matrix is maximally contained in $\Lambda^{p}$. Interestingly, the entropy of this edge shift can be computed in terms of the number of words of length $n$, say $f(n)$, in $\Lambda^{p}$ with multiplicities.
	
	In Section~\ref{sec:recurrence relations}, a formula for generating function $F(z)$ of $f(n)$ is obtained. It is observed that the Perron root of the adjacency matrix associated to $\F$ and $\R$ can be expressed as the largest real pole of $F(z)$. In Section~\ref{sec:Perron EVectors}, we obtain an expression for the left and right Perron eigenvectors of the adjacency matrix associated to $\F$ and $\R$ in terms of the correlation of words from $\F\cup\R$. In Section~\ref{sec:normalization}, we derive a simple expression for the normalization factor for the Perron eigenvectors. The theory has been well-studied in~\cite{Combinatorial, Parry}, when the collection $\R$ is empty, that is, when there are no repeated words, or equivalently, no multiple edges in the associated graph. 
	
	In Section~\ref{sec:conjugacy and measures}, we observe that for a fixed $\F$, the SFT associated to $\F$ is a factor of the edge shift associated to $\F$ and $\R$ for any given collection $\R$. We then study the relation between the Markov measures on the respective shifts and find that the Parry measure on the SFT associated to $\F$ can be obtained as a push forward of the Shannon-Parry measure on the edge shift associated to $\F$ and $\R$ for some collection $\R$. 
	Section~\ref{sec:conclusion} consists of some concluding remarks and future directions of study. 
	%

	\section{Preliminaries}\label{sec:SFT_binary}
	Let $\Sigma$ be a finite set of symbols and $\Sigma^\mathbb{N}$ be the set of all one-sided sequences with symbols from $\Sigma$. A finite sequence with symbols from $\Sigma$ is called a \textit{word}. For a word $w$, let $|w|$ denote its length. A finite collection of words is said to be \textit{reduced} if for any two distinct words in the collection, one is not a subword of the other. Let $\mathcal{F}$ be a finite reduced collection of words with symbols from $\Sigma$ having length at least 2. Define the \textit{subshift of finite type}, denoted as $\Sigma_\F$, consisting of one-sided sequences which do not contain any word from $\F$ as a subword. A word is said to be \textit{allowed} if it is a subword of some sequence in $\Sigma_\F$. A word is said to be {\it forbidden} if it is not allowed in $\Sigma_{\F}$. 
	
	\begin{definition}[Language]\label{def:lang}
		The \textit{language of the subshift $\Sigma_\F$} is defined as the collection $\mathcal{L}=\bigcup\limits_{n\ge 1}\mathcal{L}_n$ of words, where for each $n\ge 1$, $\mathcal{L}_n$ denotes the collection of all allowed words of length $n$ in $\Sigma_\F$. 
	\end{definition}
	
	A given subshift of finite type can equivalently be thought as the collection of infinite paths on certain digraph. Hence a given SFT corresponds to a matrix which is the adjacency matrix associated to this digraph. We will soon discuss this correspondence.  For now, in the following subsection, we will focus on subshifts of finite type corresponding to binary adjacency matrices. The subshifts of finite type corresponding to general (non-negative integer) adjacency matrices will be discussed in the next section.
	
	\subsection{A subshift of finite type associated to a binary matrix}
	Let $\Sigma$ be a finite set of symbols. Let $A$ be a binary matrix whose rows and columns are indexed by $\Sigma$. Also, $A$ is assumed to be irreducible (a square non-negative matrix $A=(A_{xy})$ is called \emph{irreducible} if for every $x, y$, there exists $m = m(x,y)$ such that $A^{m}_{xy}>0$). Consider the digraph $\G_A=(\V_A,\E_A)$ associated to $A$ defined as $\V_A=\Sigma$ and $\E_A=\{(xy):A_{xy}=1\}$. An infinite path of vertices on $\G_A$ is of the form $x_1x_2\dots$ where $A_{x_ix_{i+1}}=1$ for all $i$. Similarly, an infinite path of edges on $\G_A$ is of the form $(x_1y_1)(x_2y_2)\dots$ where $A_{x_iy_i}=1$ and $y_i=x_{i+1}$ for all $i$. 
	
	\begin{definition}[Vertex and Edge Shifts]
		The collection of all infinite paths of vertices on $\G_A$ is called as the \emph{vertex shift} associated to $A$ and is denoted as $\Sigma^v_A$. That is,
		\[
		\Sigma^v_A=\left\{x_{1}x_{2}x_{3} \dots \ : \  x_{i} \in \mathcal{V}_{A}, \ A_{x_{i}x_{i+1}}=1\right\}.
		\]
		The \emph{edge shift} associated to $A$, denoted by $\Sigma_A$, is defined as the collection of all infinite paths of edges on $\G_A$, that is,
		\[
		\Sigma_A=\left\{(x_{1}x_{2}) (x_{2}x_{3}) \dots \ : \  x_{i} \in \mathcal{V}_{A}, \ A_{x_{i}x_{i+1}}=1\right\}.
		\]
	\end{definition}
	\noindent Note that $\Sigma^v_A=\Sigma_\F$ where $\F=\{xy:A_{xy}=0\}.$ Here all words in $\F$ are of length 2. Further, the number of allowed words of length $n$ in $\Sigma_{\F}$, given by $|\mathcal{L}_n|$, equals the sum of entries of $A^{n-1}$. Also, $\Sigma^v_A$ and $\Sigma_A$ are conjugates where the conjugacy \footnote{ Here conjugacy is simply a ``recoding" of sequences (we refer to \cite[Definition 1.5.9]{LM_book}). In symbolic dynamics, this type of conjugacy is known as \emph{sliding block code}.} is given by $x_1x_2x_3\dots \longmapsto (x_1x_2)(x_2x_3)\dots$. 
	
	In short, for a given binary matrix, both vertex and edge shifts are conjugate to an SFT where the forbidden collection consists of words of length 2. Conversely, if we are given a forbidden collection $\F$ consisting of words of length 2, we can obtain a digraph such that the vertex shift and the edge shift associated to it are conjugate to $\Sigma_\F$. In the following section, we consider a forbidden collection $\F$ that contains words of arbitrary lengths and analyze properties similar to those just discussed.  
	
	\subsection{A binary adjacency matrix for a general $\F$}
	Let $\F$ be a finite reduced collection of forbidden words with symbols from $\Sigma$. Let $\Sigma_\F$ be the associated SFT. Throughout this paper, \textit{we assume that symbols in $\Sigma$ are not forbidden}, otherwise we can remove them from the collection of symbols $\Sigma$. Let $\mathcal{L}=\bigcup_{n\ge 1}\mathcal{L}_n$ be its language. 
	For $m\ge 1$, we define a conjugacy from $\Sigma_\F$ to an $m$-step subshift $\Sigma_\F^{[m]}$ as 
	$$x_1x_2x_3\dots \longmapsto (x_1\dots x_{m})(x_2\dots x_{m+1})(x_3\dots x_{m+2})\dots, $$ where the subshift $\Sigma_{\F}^{[m]}$ has symbols from $\mathcal{L}_{m}$ (refer higher block presentation in~\cite{LM_book}). 
	
	\begin{definition}\label{def:star}
		Let $m\ge 2$ and $X=x_1x_2\dots x_m,Y=y_1y_2\dots y_m\in\mathcal{L}_m$. Then $X*Y$ is defined only if $x_2\dots x_m=y_1\dots y_{m-1}$, and
		\[
		X*Y:=x_1x_2\dots x_my_m.
		\] 
		If $X=x_1, Y=y_1\in \mathcal{L}_{1}$, then $X*Y=x_1y_1$.
	\end{definition}

	Let $p$ be the length of the longest word in $\F$. 
	Now we define an \emph{adjacency matrix} $A=(A_{XY})$ indexed by words from $\mathcal{L}_{p-1}$ (the indexing of rows and columns are in lexicographic order for convenience), where
	\[A_{XY}=
	\begin{cases}
		
		1, & \text{if } X*Y \text{ is defined and } X*Y\in\mathcal{L}_p\\
		0, & \text{ otherwise}.
	\end{cases}
	\]
	This adjacency matrix gives a digraph $\G_A=(\V_A,\E_A)$ where $\V_A=\mathcal{L}_{p-1}$ and $\E_A=\{(XY):A_{XY}=1\}$. We label $(XY)\in\E_A$ as $X*Y$. Then the vertex shift $\Sigma^v_A$ consists of infinite paths of vertices on $\G_A$ that are of the form $X_1X_2X_3\dots$, where $A_{X_iX_{i+1}}=1$ for all $i$. That is $\Sigma^v_A=\Sigma_\F^{[p-1]}$. Similarly, the edge shift $\Sigma_A= \Sigma_\F^{[p]}$. Note that $\Sigma^v_A$ and $\Sigma_A$ are conjugates to $\Sigma_\F$. Since the words from $\mathcal{L}_{p-1}$ and $\mathcal{L}_{p}$ completely describe the subshift, the words in $\mathcal{L}_{p-1}\cup \mathcal{L}_{p}$ can be regarded as the building blocks of $\Sigma_{\F}$. Moreover, $|\mathcal{L}_n|$ is given by the sum of entries of $A^{n-p+1}$. 
	A vertex  (edge) shift is said to be {\it irreducible}, if the adjacency matrix associated to it is irreducible. In this case, the graph $\G_{A}$ associated to $A$ is strongly connected. 
	
	\subsection{Subshift of finite type as a dynamical system}
	For a given $\F$ with longest word of length $p$, consider the adjacency matrix $A$ indexed by words from $\mathcal{L}_{p-1}$, as defined in the previous subsection. In this subsection, we discuss some properties of $\Sigma^v_A$ from a dynamical viewpoint. Since $\Sigma_{\F}$ or $\Sigma_A$ (in fact any $\Sigma_\F^{[m]}$, $m\ge 1$) is conjugate to $\Sigma^v_A$, they exhibit the same properties.
	
	The \emph{left shift map} $\sigma:\Sigma^v_A\rightarrow\Sigma^v_A$ is defined as $\sigma(X_1X_2X_3\dots)=X_2X_3\dots$. A $\sigma$-invariant probability measure $\mu$ on $\Sigma^v_A$, known as the \emph{Parry measure}, is defined as follows. 
	
	\begin{definition}[The Parry measure]
		For an allowed word $w=X_1X_2\dots X_n$, let $C_w$ denote the cylinder based at $w$, that is, $C_w$ consists of all sequences in $\Sigma^v_A$ that begin with the word $w$. Then
		\[
		\mu(C_w)=\frac{U_{X_1}V_{X_n}}{\theta^{n-1}},
		\] where $\theta$ is the largest eigenvalue of $A$ (known as the Perron root), which exists, and is positive by the Perron Frobenius theorem. The vectors $U$ and $V$, with indexing of rows same as that of $A$, are the left and right Perron (column) eigenvectors corresponding to the Perron root satisfying $U^TV=1$. Extend $\mu$ to the sigma-algebra generated by all cylinders. Since $\Sigma^v_A$ and $\Sigma_A$ are conjugates, this measure can be extended to give a measure on $\Sigma_A$, which will also be called the Parry measure. 
	\end{definition}

	\noindent
	It is a fact that the topological entropy of $\Sigma_\F$ (or $\Sigma^v_A$ or $\Sigma_A$) which is defined as \[
	h_{\text{top}}(\Sigma_\F)=\lim_{n\to\infty}\frac{1}{n}\ln(|\mathcal{L}_n|),\]
	is given by $\ln(\theta)$, where $\theta$ is the Perron root of $A$. The Parry measure is the unique measure of maximal entropy according to the variational principle, that is, the measure theoretic entropy with respect to $\mu$ is equal to $\ln(\theta)$, see~\cite{Walters_book}. In~\cite{Parry}, the authors obtain a combinatorial expression for the Perron root and the associated eigenvectors of $A$, for a given forbidden collection $\F$. We now recall their results. 
	
	\subsection{Summary of existing results}	
	The concept of correlation between two words which determines the overlap between them, is described below.
	%
	\begin{definition}[Correlation]\label{def:corr}
		The \emph{correlation} of two words $u$ and $v$, denoted by $(u, v)$, is a binary string $(c_{1},\dots,\,c_{|u|})$ of length $|u|$, defined by the following algorithm. The $i^{th}$ bit $c_{i}$ is determined by placing $v$ below $u$ such that the leftmost symbol in $v$ lies under the $i^{th}$ symbol from left in $u$. Set $c_{i} = 1$ if and only if the overlapping segments of $u$ and $v$ are identical, else set $c_{i} = 0$.
		The correlation of $u$ and $v$ can also be interpreted as a polynomial in some variable $z$ as, 
		$(u,v)_{z}=\sum\limits_{i=1}^{|u|} c_{i} z^{|u|-i}.$ The polynomial $(u,v)_z$ is termed as the \emph{correlation polynomial}.
	\end{definition}
	
	Let $\F=\{a_1,a_2,\dots,a_s\}$ be a reduced collection of forbidden words with symbols from $\Sigma$, $\Sigma_\F$ be the associated subshift and $\mathcal{L}=\bigcup_{n\ge 1}\mathcal{L}_n$ be its language. Let $f(n)=|\mathcal{L}_n|$ and $f_i(n)$ be the number of words of length $n$ ending with $a_i$ with symbols from $\Sigma$ not containing any of the words from $\F$ except the single appearance of $a_i$ at the end. Let $F(z)=\sum\limits_{n=0}^\infty f(n)z^{-n}$ and $F_i(z)=\sum\limits_{n=0}^\infty f_i(n)z^{-n},\ 1\le i\le s,$ be their respective generating functions. 
	The following result gives an expression for these generating functions in terms of the correlation between the forbidden words. 
	
	\begin{theorem}~\cite[Theorem 1]{Combinatorial}\label{thm:Guibas}
		The generating functions $F(z)$, $F_i(z)$ satisfy the linear system of equations 
		\begin{eqnarray*}
			K(z)\begin{pmatrix}F(z) \\ F_1(z) \\ \vdots \\ F_s(z)\end{pmatrix} = \begin{pmatrix}z \\ 0 \\ \vdots \\ 0\end{pmatrix},
		\end{eqnarray*}
		where $K(z)=\begin{pmatrix}z-q & z \mathbbm{1}^T \\ \mathbbm{1} &-z\mathcal{M}(z)\end{pmatrix}$, $\mathcal{M}(z)=((a_j,a_i)_z)_{1\le i,j\le s}$ is the correlation matrix for the collection $\F$, $\mathbbm{1}$ denotes the column vector of size $s$ with all 1's and $q$ is the size of $\Sigma$. In particular, \[F(z)=\frac{z}{z-q+R(z)},\]
		where $R(z)$ is the sum of the entries of $\mathcal{M}^{-1}(z)$.
	\end{theorem}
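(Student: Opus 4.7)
The plan is to derive two linear relations among $F(z)$ and the $F_i(z)$, and then extract the closed form by a short linear algebra step. The first relation comes from a symbol-appending recurrence: for any allowed word $u$ of length $n$ and any of the $q$ symbols $x\in\Sigma$, the extension $ux$ either remains allowed or contains exactly one forbidden occurrence, which must appear as a suffix equal to some $a_i$. (At most one such suffix can occur because $\F$ is reduced: two distinct suffixal forbidden words would force one to be a substring of the other.) This disjoint partition yields $qf(n)=f(n+1)+\sum_i f_i(n+1)$ for all $n\ge 0$; passing to generating functions, using $f(0)=1$ and $f_i(0)=0$, produces the top row $(z-q)F(z)+z\sum_i F_i(z)=z$ of the claimed system.

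For each fixed $i\in\{1,\dots,s\}$ I would establish the relation $F(z)=z\sum_j(a_j,a_i)_z F_j(z)$ via the Guibas--Odlyzko cluster decomposition. The set $\{u a_i : u\in\mathcal{L}\}$ has generating function $z^{-|a_i|}F(z)$. Decompose each such word by locating the leftmost position at which some forbidden $a_j$ terminates; because $u$ is allowed this position is $|u|+k$ with $1\le k\le|a_i|$, giving a unique factorization $ua_i=v\cdot t$ where $v$ is counted by $f_j$ and $t$ is the length-$(|a_i|-k)$ tail of $a_i$. The compatibility condition is that the last $k$ symbols of $a_j$ coincide with the first $k$ of $a_i$, which is precisely the statement that the correlation bit $c_{|a_j|-k+1}$ of $(a_j,a_i)$ equals $1$. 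Reducedness of $\F$ guarantees that no correlation bit $c_\ell$ with $\ell\le|a_j|-|a_i|$ can be $1$ (otherwise $a_i$ would be a substring of $a_j$), so the constraint $k\le|a_i|$ is automatic and the bijection is exhaustive. Multiplying through by $z^{|a_i|}$ and reindexing by $\ell=|a_j|-k+1$ transforms $z^{-|a_i|}F(z)=\sum_{j,k}z^{-(|a_i|-k)}F_j(z)$ into $F(z)=z\sum_j(a_j,a_i)_z F_j(z)$, which is exactly the $i$-th lower row.

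The closed form is then immediate. From the lower block $F(z)\mathbbm{1}=z\mathcal{M}(z)\mathbf{F}(z)$ one solves $\mathbf{F}(z)=z^{-1}F(z)\mathcal{M}^{-1}(z)\mathbbm{1}$, so that $\sum_i F_i(z)=\mathbbm{1}^T\mathbf{F}(z)=z^{-1}F(z)R(z)$ with $R(z)=\mathbbm{1}^T\mathcal{M}^{-1}(z)\mathbbm{1}$. Substituting into the top row gives $(z-q+R(z))F(z)=z$, hence $F(z)=z/(z-q+R(z))$.

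I anticipate that the main obstacle is verifying bijectivity in the cluster decomposition. One must simultaneously rule out overlapping forbidden words producing multiple candidate $(j,k)$ for the same $ua_i$, show that every tuple $(v,j,k)$ with a matching correlation bit lifts back uniquely to some $ua_i$ with $u$ allowed, and show that the extra bits of $(a_j,a_i)$ corresponding to $k>|a_i|$ all vanish. Each of these reduces cleanly to the reducedness of $\F$, but the verification requires careful bookkeeping of positions together with a case split on whether $k<|a_j|$, $k=|a_j|$ (forcing the diagonal case $j=i$, $k=|a_i|$), or $k>|a_j|$ (impossible).
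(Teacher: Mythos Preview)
Your proposal is correct and follows essentially the same approach as the paper. The paper does not prove this theorem directly (it is cited from \cite{Combinatorial}), but obtains it as the $\R=\emptyset$ specialization of Theorem~\ref{thm:reduced_gen_fun}, whose proof uses precisely your two recurrences: the symbol-appending identity $qf(n)=f(n+1)+\sum_i f_{a_i}(n+1)$ for the top row, and the decomposition of $wa_k$ by the first forbidden occurrence on the join (your ``cluster decomposition'') for the lower rows, followed by the same block-inverse computation to extract $F(z)$.
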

	In~\cite[Theorem 1]{Parry}, the Perron root of the associated adjacency matrix $A$ is proved to be the largest real pole of $F(z)$ and hence we have the following result.
	
	\begin{theorem}
		With the notations as above, the Perron root $\theta>0$ is the largest (in modulus) zero of the rational function  $z - q + R(z)$.
	\end{theorem}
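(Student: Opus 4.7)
The plan is to combine Theorem~\ref{thm:Guibas} with the result cited from~\cite[Theorem 1]{Parry}. From Theorem~\ref{thm:Guibas} we have the closed form
\[
F(z) \;=\; \frac{z}{z-q+R(z)},
\]
which, after clearing the denominators hidden inside $R(z)=\mathbf{1}^{T}\mathcal{M}^{-1}(z)\mathbf{1}$, exhibits $F(z)$ as a rational function whose poles are exactly the zeros of $z-q+R(z)$ (away from $z=0$, which is not a pole of $F$ since $F(z)=\sum_{n\ge 0}f(n)z^{-n}$ is a Laurent series in $z^{-1}$ convergent near $\infty$). Consequently, every positive pole of $F(z)$ is a zero of $z-q+R(z)$, and in particular $\theta>0$ appears as such a zero once we know it is a pole of $F$.

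To upgrade ``largest real,'' which is what~\cite[Theorem 1]{Parry} supplies, to ``largest in modulus,'' I would invoke the Perron-Frobenius theorem. Since $f(n)$ equals, up to a bounded factor depending only on $p$, the sum of the entries of $A^{n-p+1}$, and since $A$ is an irreducible non-negative integer matrix, we have $\limsup_{n\to\infty}f(n)^{1/n}=\theta$, the spectral radius of $A$. Hence the Laurent series defining $F(z)$ converges absolutely on $\{|z|>\theta\}$, which forces every pole of $F(z)$ to lie in the closed disk $\{|z|\le\theta\}$. Combining this with the first step and with the cited result that $\theta$ is itself a pole of $F(z)$, one concludes that $\theta$ is a zero of $z-q+R(z)$ of maximal modulus.

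The step I expect to require the most care is excluding spurious zeros of $z-q+R(z)$ on the circle $|z|=\theta$ that are cancelled exactly by poles of $R(z)$ and therefore fail to show up as poles of $F(z)$. For the point $z=\theta$ itself, Perron-Frobenius resolves the issue: $\theta$ is a simple eigenvalue of $A$ with a strictly positive Perron eigenvector, so $f(n)\sim C\theta^{n}$ for some $C>0$, and hence $F(z)$ has a genuine simple pole at $\theta$. Any zero of $z-q+R(z)$ lying strictly outside $\{|z|\le\theta\}$ would contradict the Perron-Frobenius growth rate of $f(n)$. Beyond these bookkeeping points, the argument is essentially a direct transcription of~\cite[Theorem~1]{Parry} via the generating-function identity supplied by Theorem~\ref{thm:Guibas}, and no new ideas are needed.
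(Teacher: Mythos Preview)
The paper does not supply its own proof of this statement: it is recorded in the preliminaries as a direct consequence of \cite[Theorem~1]{Parry} together with the generating-function identity of Theorem~\ref{thm:Guibas}, and no further argument is given. Your outline is therefore more detailed than what the paper itself provides, and it follows exactly the intended route --- identify the poles of $F$ with the zeros of $z-q+R(z)$, and use the growth rate $f(n)^{1/n}\to\theta$ coming from Perron--Frobenius to pin down $\theta$ as the pole of maximal modulus.

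One comment on your ``care'' paragraph: the worry about spurious zeros is phrased in a slightly misleading way. A pole of $R(z)$ is a point where $z-q+R(z)$ itself blows up, so it cannot simultaneously be a zero of that rational function; there is nothing to cancel. The clean way to settle the bookkeeping is to write $z-q+R(z)=N(z)/D(z)$ in lowest terms, so that $F(z)=zD(z)/N(z)$. Any zero $z_0$ of $N$ is then a pole of $F$ unless $z_0D(z_0)=0$; coprimality of $N$ and $D$ forces $z_0=0$, which is harmless since $\theta>0$. Hence every nonzero zero of $z-q+R(z)$ is a genuine pole of $F$, and the convergence of $F$ on $\{|z|>\theta\}$ finishes the argument. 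With this adjustment your sketch is correct and matches the paper's (implicit) reasoning.
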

	
	In~\cite{Parry}, the authors also obtain an expression for the left and right Perron eigenvectors corresponding to the Perron root, in terms of the correlation polynomials. This is then used to obtain a combinatorial expression for the Parry measure on $\Sigma_\F$. 	
	
	\section{Subshift associated with a non-negative integer matrix}\label{sec:SFT_general}
	In the preceding section, we obtained an SFT associated to a given binary matrix and vice-a-versa. In this section, we generalize this concept to a subshift of finite type associated to a non-negative integer matrix. Here the corresponding digraph may have multiple edges between its vertices. 
	
	\subsection{A subshift of finite type from a non-negative integer matrix}\label{subsec:general_matrix}
	Let $\Sigma$ be a finite symbol set.  
	Let $A=(A_{xy})$ be a non-negative integer matrix indexed by $\Sigma$. Consider the directed multigraph $\mathcal{G}_{A} = (\mathcal{V}_{A}, \mathcal{E}_{A})$ associated to the matrix $A$ where $\mathcal{V}_{A} = \Sigma$ and the number of edges from the vertex $x$ to $y$ is given by $A_{xy}$. Let us visualize $\G_A$ labelled with all the edges of initial vertex $x$ and terminal vertex $y$ distinctly as, $(xy)_{1}, \dots, (xy)_{A_{xy}}$. 
	Using the edge set $\mathcal{E}_A$ as a new symbol set, we consider the edge shift associated with $A$, denoted by $\Sigma_A$, as
	\[
	\Sigma_A=\left\{(x_{1}x_{2})_{i_{1}} (x_{2}x_{3})_{i_{2}} \dots \ : \  x_{k} \in \mathcal{V}_{A}, \ 1 \le i_{k} \le A_{x_{k}x_{k+1}},\ k\ge 1\right\}.
	\]
	Let $\mathfrak{L}_n$ be the set of all allowed words of length $n$ in $\Sigma_A$, and let $\mathfrak{L} = \bigcup\limits_{n \, \ge \, 1} \mathfrak{L}_{n}$ be the language of $\Sigma_A$. Note here that the label for the language is different than that in Definition~\ref{def:lang} .
	
	\begin{definition}[Shannon-Parry measure]\label{def:sp}
		The \emph{Shannon-Parry measure} (refer to~\cite{Shannon,LM_book}) on $\Sigma_A$ is defined as
		\[
		\mu(C_W)=\frac{U_{x_1}V_{x_{n+1}}}{\theta^n},
		\] where $C_W$ is the cylinder based at $W=(x_{1}x_{2})_{i_{1}} (x_{2}x_{3})_{i_{2}} \dots (x_{n}x_{n+1})_{i_{n}}\in\mathfrak{L}_n$, $\theta$ is the Perron root of $A$ and $U$ and $V$ are the left and right Perron (column) eigenvectors such that $U^TV=1$. 
	\end{definition}
	
	The Shannon-Parry measure $\mu$ is invariant with respect to the left shift map $\sigma$ on $\Sigma_A$. It is the unique measure of maximal entropy where the topological entropy of $\Sigma_A$ is given by $h_{\text{top}}(\Sigma_A)=\lim_{n\to\infty}\frac{1}{n}\ln(|\mathfrak{L}_n|)$ and is the same as $\ln(\theta)$.  
	
	Let $\F=\{xy:A_{xy}=0\}$, $\Sigma_\F$ be the associated SFT and $\mathcal{L}=\bigcup_{n\ge 1}\mathcal{L}_n$ be its language. Note that $\Sigma_\F$ consists of all infinite paths of vertices on $\G_A$. However, unlike in Section~\ref{sec:SFT_binary}, $\Sigma_\F$ and $\Sigma_A$ need not be conjugates as $\Sigma_\F$ does not differentiate the multiple edges. To incorporate this, we define a multiplicity for each word in $\Sigma_\F$. 
	
	\begin{definition}[Multiplicity of an allowed word]\label{def:multi}
		Let $w=x_1x_2\dots x_n\in\mathcal{L}_n$. The \emph{multiplicity of $w$ in $\G_A$}, denoted as $m(w)$, is defined as the number of paths (of edges) of length $n-1$ in $\G_A$ with fixed vertices $x_1,\dots,x_{n}$ (in order). In other words, $m(w)=\prod_{i=1}^{n-1}A_{x_ix_{i+1}}$.
	\end{definition}
	
	\noindent
	A word $ w \in \mathcal{L}_n$ corresponds to $m(w)$ many words of the form $(x_{1}x_{2})_{i_{1}} (x_{2}x_{3})_{i_{2}} \dots (x_{n-1}x_{n})_{i_{n-1}}$ in $\mathfrak{L}_{n-1}$. Hence \[|\mathfrak{L}_{n-1}|= \sum_{w\in\mathcal{L}_{n}}m(w),\]
	which equals the sum of entries of the matrix $A^{n-1}$.  
	
	A word is called {\it repeated} if its multiplicity is greater than $1$.
	For $x,y \in \mathcal{V}_{A}$, $m(xy) = A_{xy}$. Define $\R$ to be the collection of repeated words given by $\R=\left\lbrace xy \, : \, A_{xy} >1 \right\rbrace$. Note that the multiplicity of any allowed word can be computed in terms of multiplicities of words from $\R$. Here words from $\F$ and $\R$ have length 2 and they uniquely determine $\Sigma_A$.

	\subsection{A non-negative integer matrix for general collections $\F$ and $\R$}
	Now we consider a general situation where a finite reduced collection $\F$ of forbidden words is given. Let $\Sigma_\F$ be the associated subshift of finite type. Assume that $\Sigma_\F$ is irreducible with language given by $\mathcal{L}=\bigcup_{n\ge 1}\mathcal{L}_n$, where $\mathcal{L}_n$ is the collection of allowed words of length $n$ in $\Sigma_\F$. 
	
	Let $\R=\{ r_{1}, r_{2}, \dots, r_{\ell}  \}$ be a reduced collection of allowed words in $\Sigma_\F$. 
	For each word $r_i\in\R$, we assign a number $m_i$, at least 2, which we call the \textit{multiplicity} of the word $r_i$.  
	For convenience of notation, we denote $\R$ as, $\R = \{ r_{1} (m_{1}), r_{2}(m_{2}), \dots, r_{\ell}(m_{\ell}) \}$, where the multiplicity $m_{i}$ of the word $r_{i}$ is mentioned in the corresponding parentheses. Here the words from $\F$ or $\R$ can be of arbitrary lengths. Given a collection $\R$, we extend the concept of multiplicity to all finite words as follows.
	\begin{definition}[Multiplicity of a word]
		Let $\R=\{r_1(m_1),\dots,r_\ell(m_\ell)\}$ be a given collection consisting of allowed words in $\Sigma_\F$.  For any $w\in\Sigma^n$, we define its \emph{multiplicity}, denoted by $m(w)$, as, 
		\[
		m(w) = \begin{cases}
			\prod_{r_i\in\R} m_i^{n(w ; r_i)}, & \text{ if } w\in\mathcal{L}_n,\\
			0, & \text{ otherwise } \\
		\end{cases}
		\]
		where $n(w;r_i)$ denotes the number of times the word $r_i\in\R$ appears as a subword of the word $w$. An allowed word is said to be \emph{repeated} if $m(w)>1$. 
	\end{definition}

	Clearly if an allowed word $w$ does not contain any repeated word then $m(w)$ equals 1. Moreover for each $r_i\in\R$, $m(r_i)=m_i$.

	
	\begin{definition}[Generalized Language]\label{def:lambda_n}
		For $n \ge 1$, define a multiset
		\[
		\Lambda_{n}:=\left\lbrace (w, m(w)) :w\in\mathcal{L}_n \right\rbrace,
		\] where we use the notation $(w, m(w))$ to denote the fact that an allowed word $w\in \mathcal{L}_n$ appears $m(w)$ many times in $\Lambda_n$. We define the \emph{generalized language corresponding to $\F$ and $\R$ of level $m\ge 1$} as the multiset,
		\[
		\Lambda^m=\bigcup_{n\ge m}\Lambda_n.
		\]
	\end{definition}
	
Using the properties of a multiset, the \textit{cardinality} of $\Lambda_{n}$, denoted by $|\Lambda_{n}|$, is given by, $|\Lambda_{n}| = \sum_{w \in \mathcal{L}_{n}} m(w)$. In what follows, the statement `$w\in\Lambda_n$' will mean that $w$ is an element of $\mathcal{L}_n$ and is repeated $m(w)$ many times in the multiset $\Lambda_n$.

	\begin{exam}
		Let $\Sigma=\{0,1\}$, $\F=\{01\}$ and $\R=\{00(\alpha), 111(\beta)\}$. Then $\Lambda_1=\{(0,1), (1,1)\}$, $\Lambda_2=\{(00,\alpha),(10,1),(11,1)\}$, $\Lambda_3=\{(000,\alpha^2),(100,\alpha),(110,1),(111,\beta)\}$.
	\end{exam}
	
	%
	
	\begin{remark}\label{rem:gen_matrix}
		Let $A=(A_{xy})$ be a non-negative integer matrix and $\G_A$ be the associated digraph. Consider the edge shift $\Sigma_A$ associated to $A$ as in Section~\ref{subsec:general_matrix} and let  $\F=\{xy:A_{xy}=0\}$ and $\R=\{xy:A_{xy}>1\}$ with $m(xy)=A_{xy}$. For $n\ge 2$ and an allowed word $w=x_1\dots x_n\in\Lambda_n$, $m(w)$ is the multiplicity of $w$ in $\G_A$ as given in Definition~\ref{def:multi}, that is, $m(w)$ is the number of paths on $\G_A$ with fixed vertices $x_1,x_2,\dots,x_{n}$ (in order). 
		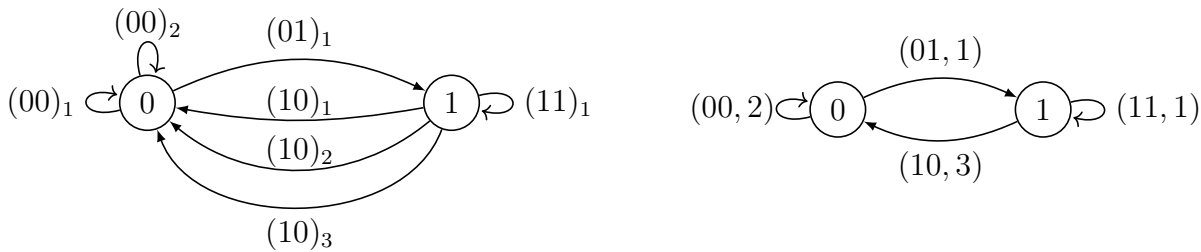
\begin{figure}[h]
			\centering
			$\displaystyle
			\begin {tikzpicture}[-latex ,auto ,node distance =3cm and 4cm ,on grid , semithick , state/.style ={draw, circle}] 
			\node[state,scale=1] (A) {$0$};
			\node[state,scale=1] (B) [right =of A] {$1$};
			\path (A) edge [loop left] node[right=-1.2cm] {$(00)_{1}$} (A);
			\path (A) edge [loop above] node[below=-0.6cm] {$(00)_{2}$} (A);
			\path (A) edge [bend left =25] node[above] {$(01)_{1}$} (B);
			\path (B) edge [bend right =-10] node[below =-.6 cm] {$(10)_{1}$}(A);
			\path (B) edge [bend right = -40] node[below =-.6 cm] {$(10)_{2}$}(A);		
			\path (B) edge [bend right =-70] node[below] {$(10)_{3}$} (A);
			\path (B) edge [loop right] node[right] {$(11)_{1}$} (B);
		\end{tikzpicture}
		\hspace{1cm}
		\begin {tikzpicture}[-latex, baseline=-2cm, auto ,node distance =2cm and 2.7cm ,on grid ,semithick ,state/.style ={draw, circle}] 
		\node[state] (A) {$0$};
		\node[state] (B) [right =of A] {$1$};
		\path (A) edge [loop left] node[right=-1.3cm] {$(00,2)$} (A);
		\path (A) edge [bend left =25] node[above] {$(01,1)$} (B);
		\path (B) edge [bend right=-25] node[below] {$(10,3)$}(A);
		\path (B) edge [loop right] node[right] {$(11,1)$} (B);
	\end{tikzpicture}
	$
	\caption{Standard interpretation on the left and new interpretation on the right}
	\label{fig:new_graph}
\end{figure}

\noindent
For $n\ge 2$, an allowed word $(x_{1}x_{2})_{i_{1}} (x_{2}x_{3})_{i_{2}} \dots (x_{n-1}x_{n})_{i_{n-1}}$ in $\Sigma_A$ corresponds to exactly one of $m(w)$ many repetitions of $w=x_1x_2\dots x_{n}$ giving a bijection between the language of $\Sigma_A$ and the generalized language $\Lambda^2$. Moreover, the topological entropy of $\Sigma_A$ is $h_{\text{top}}(\Sigma_A) = \lim_{n \to \infty} \frac{1}{n} \ln(|\Lambda_{n}|) $. Hence we can say that $\Lambda^2$ provides an alternate interpretation of $\Sigma_A$. An illustration of this interpretation is given in Figure~\ref{fig:new_graph} for the adjacency matrix $A=\begin{pmatrix}
	2&1\\3&1
\end{pmatrix}$. Note that the graph on the left is $\G_A$ with edges labelled.

\end{remark}

Now we address the converse of Remark~\ref{rem:gen_matrix}.
Let $p$ denote the length of the longest word in $\F \cup \R$. Our aim is to find a matrix $A$ (indexed by $\mathcal{L}_{p-1}$) such that the entropy of the edge shift $\Sigma_A$ is $h_{\text{top}}(\Sigma_A) = \lim_{n \to \infty} \frac{1}{n} \ln(|\Lambda_{n}|)$. As before, we will need to consider the higher block representation. One possibility for such a matrix $A$ is when the associated edge shift $\Sigma_A$ has language (in bijection with) $\Lambda^p=\bigcup_{n\ge p}\Lambda_{n}$. 

Using sliding block code representation, $\Lambda^p$ is the language of $\Sigma_A$ for some adjacency matrix $A$ indexed by $\mathcal{L}_{p-1}$ if and only if for each $w=x_1\dots x_n\in\Lambda_n$ with $n\ge p$, $m(w)$ matches with the number of paths with fixed vertices $X_1,\dots,X_{n-p+2}$ (in order) where $X_i=x_i\dots x_{i+p-2}$, for $1\le i\le n-p+2$.



%
%


\begin{exam}\label{ex:1} 
Let $\Sigma=\{0,1\}$, $\F=\{11\}$ and $\R=\{00(\alpha)\}$.
If $A=\begin{pmatrix}
\alpha & 1\\ 1 & 0
\end{pmatrix},$ then 
by Remark~\ref{rem:gen_matrix},  $\Sigma_A$ has language $\Lambda^2$. In fact, whenever words from $\F$ and $\R$ have length 2, then $\Sigma_A$ has language $\Lambda^2$ where $A_{xy}=m(xy)$. 
%
\end{exam}

\begin{remark}\label{rem:full_matrix}
For given collections $\F,\R$, a seemingly natural choice for the adjacency matrix $A$ is the one indexed by words from $\mathcal{L}_{p-1}$, with $A_{XY}=m(X*Y)$, for each $X,Y\in \mathcal{L}_{p-1}$ (as in Example~\ref{ex:1}). But the following example shows that this is not the right choice always. Let $\Sigma=\{0,1\}, \F=\{00\}$ and $\R=\{110(2),01(3)\}$. Here $p=3$, and the matrix $A$, indexed by $\{01,10,11\}$, is given by $\begin{pmatrix}
0&3&3\\3&0&0\\0&2&1
\end{pmatrix}$ and its associated graph $\G_{A}$ is as shown in Figure~\ref{fig:eg}.
\begin{figure}[h]
\centering
$\begin{tikzpicture}[-latex, baseline=-1.2cm, auto=left, node distance =2cm and 2.5cm ,on grid, semithick, state/.style ={draw, circle}]
\node[state,scale=0.8] (A) {$01$};
\node[state,scale=0.8] (B) [right =of A] {$10$};
\node[state,scale=0.8] (C) [below =of A] {$11$};
\path (C) edge [loop below] node[above] {} (C);
\path (A) edge [bend left =25] node[above] {} (B);
\path (A) edge [bend left =50] node[above] {} (B);
\path (A) edge [bend left =75] node[above] {} (B);
\path (A) edge [bend right =25] node[left] {} (C);
\path (A) edge [bend right =50] node[left] {} (C);
\path (A) edge [bend right =75] node[left] {} (C);
\path (B) edge [bend right =-25] node[below] {} (A);
\path (B) edge [bend right =-50] node[below] {} (A);
\path (B) edge [bend right =-75] node[below] {} (A);
\path (C) edge [bend left =-50] node[above] {} (B);
\path (C) edge [bend left =-75] node[above] {} (B);
\end{tikzpicture}
$
\caption{The graph $\G_{A}$}
\label{fig:eg}
\end{figure}

\noindent
Here $m(1010)=3$. However, in $\G_A$, there are 9 distinct paths of length 2 with fixed vertices $X_1=10,X_2=01,X_3=10$ (in order). Also, $h_{\text{top}}(\Sigma_A)\sim\ln(3.9)$, but $\lim_{n \to \infty}\frac{1}{n}\ln(|\Lambda_{n}|)\sim\ln(2.6)$ (this limit is calculated using the results from Section~\ref{sec:recurrence relations}).
\end{remark}


\noindent Let $X=x_1\dots x_n, Y=y_{1}\dots y_{n}$ such that $X*Y$ is defined but $X*Y\notin\R$. Then,
\begin{eqnarray*}
m(X* Y)&=&\dfrac{m(X)m(Y)}{m(x_{2}\dots x_{n})}.
\end{eqnarray*}
In order to incorporate this term in the denominator, for any word $v=v_1\dots v_n\in\mathcal{L}_n$ we define a number
\[
k(v):=\dfrac{m(v)}{m(v_2\dots v_{n})}. 
\] 
Therefore $m(X*Y)=k(X)m(Y)$, provided $X*Y\notin\R$.
It is easy to see that $k(v)>1$ if and only if $v$ begins with a word from $\R$.\\
For each $v\in\Lambda_n,n>p,$ the multiplicity of $v$ can be obtained from the multiplicities of words from $\Lambda_{p-1}\cup \Lambda_p$. Hence the words in $\Lambda_{p-1}\cup \Lambda_p$ act as building blocks for the words in $\Lambda_n$.\\ 
The following result provides a necessary and sufficient condition on $\F$ and $\R$ such that $\Lambda^p$ is a language of $\Sigma_A$ for some $A$.
\begin{theorem}\label{thm:lang}
Let $p$ be the length of the longest word in $\F\cup\R$. The generalized language $\Lambda^{p}=\bigcup_{n\ge p}\Lambda_n$ is the language of an edge shift if and only if all the words in $\R$ are of length $p$. 
\end{theorem}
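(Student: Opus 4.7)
The plan is to use the sliding-block criterion stated just above the theorem: $\Lambda^p$ is the language of $\Sigma_A$ for some adjacency matrix $A$ indexed by $\mathcal{L}_{p-1}$ if and only if, for every $w = x_1\ldots x_n \in \Lambda_n$ with $n \ge p$, the identity $m(w) = \prod_{i=1}^{n-p+1} A_{X_i X_{i+1}}$ holds, where $X_i = x_i\ldots x_{i+p-2}$. Taking $n = p$ forces $A_{XY} = m(X*Y)$ whenever $X*Y \in \mathcal{L}_p$ and $A_{XY} = 0$ otherwise, so the theorem reduces to asking when this canonical $A$ satisfies the criterion for every $n > p$.

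For the sufficiency direction, assume every $r \in \R$ has length exactly $p$. Then for any $w \in \mathcal{L}_n$ with $n \ge p$, the count $n(w; r_j)$ equals the number of length-$p$ windows $x_i\ldots x_{i+p-1}$ of $w$ that coincide with $r_j$, while windows matching no element of $\R$ have multiplicity $1$. Hence
\[
\prod_{i=1}^{n-p+1} A_{X_i X_{i+1}} \;=\; \prod_{i=1}^{n-p+1} m(x_i\ldots x_{i+p-1}) \;=\; \prod_{r_j \in \R} m_j^{n(w;r_j)} \;=\; m(w),
\]
which verifies the criterion.

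For the necessity direction, I argue by contradiction: suppose some $r \in \R$ has length $q < p$. The criterion at $n = p+1$, combined with $A_{XY} = m(X*Y)$, yields the consistency identity $m(u)\,m(v) = m(w)$ for every $w = x_1\ldots x_{p+1} \in \mathcal{L}_{p+1}$, where $u = x_1\ldots x_p$ and $v = x_2\ldots x_{p+1}$. The goal is to exhibit a witness $w$ violating this identity. Using irreducibility of $\Sigma_\F$, I take a bi-infinite point passing through $r$ and cut out a length-$(p+1)$ window placing $r$ at position $2$; this is possible precisely because $q \le p-1$ makes position $2$ strictly interior, so $r$ appears as a subword of both $u$ and $v$. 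A position-by-position tally of exponents then shows that for each $r' \in \R$, an occurrence at position $j \in \{2, \ldots, p - |r'| + 1\}$ in $w$ contributes twice to $n(u; r') + n(v; r')$ but only once to $n(w; r')$, whereas positions $1$ and $p - |r'| + 2$ contribute once to each side. Our construction produces at least one such interior contribution (from the distinguished copy of $r$), giving $m(u)\,m(v) \ge m(r)\,m(w) \ge 2\,m(w) > m(w)$, the desired contradiction.

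The main obstacle is the witness construction together with the exponent bookkeeping: one must verify that the length-$(p+1)$ window extracted from the bi-infinite point does lie in $\mathcal{L}_{p+1}$ (immediate since the point belongs to $\Sigma_\F$) and that the interior-versus-boundary dichotomy for occurrences of each $r' \in \R$ exactly accounts for the discrepancy between $n(u;r') + n(v;r')$ and $n(w;r')$. Once this dichotomy is set up, the inequality $m(r) \ge 2$ guarantees that no cancellation across other repeated words can rescue the consistency identity, and the contradiction follows cleanly.
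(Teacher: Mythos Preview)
Your proposal is correct and follows essentially the same strategy as the paper's proof. Both directions match: for sufficiency you define $A_{XY}=m(X*Y)$ and verify the product formula, exactly as the paper does; for necessity you produce a length-$(p+1)$ witness $w$ for which $m(u)\,m(v)\neq m(w)$.

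The only notable difference is in packaging the contradiction. The paper observes that if some word in $\R$ has length $<p$, then there is $X\in\mathcal{L}_{p-1}$ with $m(X)>1$, extends it to $w=xXy\in\mathcal{L}_{p+1}$, and invokes the clean identity $m(xX)\,m(Xy)=m(xXy)\,m(X)$, which immediately gives the strict inequality. Your position-by-position exponent tally is precisely a derivation of this identity (the interior positions $2,\ldots,p-|r'|+1$ in $w$ are exactly the positions of $r'$ inside the overlap $X$), so the two arguments are the same computation presented at different levels of abstraction. One minor terminological point: the paper works with one-sided sequences, so ``bi-infinite point'' is a slight misnomer, but irreducibility of $\Sigma_\F$ lets you extend $r$ on both sides to obtain the needed $w\in\mathcal{L}_{p+1}$, which is all your construction requires.
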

\begin{proof}
Let $\mathcal{L}=\bigcup_{n\ge1}\mathcal{L}_n$ be the language of $\Sigma_\F$. Suppose $\R$ contains words of equal length $p$. Define a matrix $A$ indexed by $\mathcal{L}_{p-1}$ as follows,
\[A_{XY}=
\begin{cases}
m(X*Y), & \text{ if } X*Y\in \mathcal{L}_{p} \\
0, & \text{ otherwise }
\end{cases}
\]
Consider the associated digraph $\G_A$ and the edge shift $\Sigma_A$. We claim that $\Lambda^p$ is the language of $\Sigma_A$. That is, for each word $w=x_1x_2\dots x_n\in\Lambda_n, n\ge p$, $m(w)$ is same as the number of paths with fixed vertices $X_1,\dots,X_{n-p+2}$ (in order), where $X_i=x_i\dots x_{i+p-2}$, for $1\le i\le n-p+2$. Note that the number of paths in $\G_A$ with fixed vertices $X_1,X_2,\dots,X_n$ (in order) is 
\[
m(X_1*X_{2})m(X_{2}*X_{3})\dots m(X_{n-1}*X_{n}),
\]
which is the same as $m(w)$ since $\R$ has words of length $p$. Hence the claim.\\
Conversely, suppose $\Lambda^p$ is the language of an edge shift. Since $\Lambda_{p-1}\cup\Lambda_p$ is the building block for the words in $\Lambda^p$, the edge shift has to be associated with the matrix $A$ as defined in the previous part of the proof (since path of length 1 defines the entries in the matrix). We need to prove that all words in $\R$ are of length $p$. Suppose on the contrary, there exists $X=x_1\dots x_{p-1}\in\Lambda_{p-1}$, such that $m(X)>1$. Choose $x,y\in\Sigma$, such that $xx_1\dots x_{p-1}y\in\Lambda_{p+1}$ (use irreducibility of $\Sigma_\F$). The number of paths with fixed vertices $xx_1\dots x_{p-2},X,x_2\dots x_{p-1}y$ is $m(xx_1\dots x_{p-1})m(x_1\dots x_{p-1}y)$. However,
\[
m(xx_1\dots x_{p-1})m(x_1\dots x_{p-1}y)=m(xx_1\dots x_{p-1}y)m(X)>m(xx_1\dots x_{p-1}y),
\]
which is a contraction.
\end{proof}
By Theorem~\ref{thm:lang}, $\Lambda^p$ need not always be a language. The following result describes the structure of $\Lambda^p$ in general. Let $\mathcal{L}_1$ and $\mathcal{L}_2$ be languages of two subshifts of finite type. Then we write $\mathcal{L}_1\subseteq \mathcal{L}_2$ if $\mathcal{L}_1$ is contained in  $\mathcal{L}_2$ upon relabeling the symbols of $\mathcal{L}_1$.
\begin{theorem}\label{thm:same_roots}
With the notations as above, let $p$ be the length of the longest word in $\F\cup\R$. Then there exists a non-negative integer matrix $A$ indexed by $\mathcal{L}_{p-1}$ such that the associated edge shift $\Sigma_A$ with language $\mathcal{L}(\Sigma_A)$ satisfies the following.
\begin{enumerate}[label=(\roman*)]
\item $\mathcal{L}(\Sigma_A) \subseteq \Lambda^{p}$, \label{item:prop1}
\item The topological entropy $h_{top}(\Sigma_A)=\lim_{n\rightarrow \infty} \dfrac{1}{n}\ln |\Lambda_n|$. \label{item:prop2}
\item $h_{top}(\Sigma_A)>0$, if $\R\ne\emptyset$.
\end{enumerate}
\end{theorem}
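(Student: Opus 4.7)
The plan is constructive. I would define the matrix $A$ indexed by $\mathcal{L}_{p-1}$ via
\[
A_{XY}=\begin{cases}k(X*Y),&\text{if }X*Y\in\mathcal{L}_p,\\ 0,&\text{otherwise.}\end{cases}
\]
Since $\R$ is reduced, at most one $r\in\R$ can be a prefix of the length-$p$ word $X*Y$, so $A_{XY}$ equals $m(r)$ for that unique prefix (if any) and $1$ otherwise, whenever $X*Y\in\mathcal{L}_p$. In particular $A_{XY}\ge 1$ exactly when $X*Y\in\mathcal{L}_p$, so the underlying digraph $\G_A$ coincides with the graph of the higher-block presentation $\Sigma_\F^{[p-1]}$ and is strongly connected by irreducibility of $\Sigma_\F$. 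The naive choice $A_{XY}=m(X*Y)$ fails (cf.\ Remark~\ref{rem:full_matrix}) because a short $\R$-word lying interior to a length-$p$ window would then be counted by every edge whose window overlaps it; the function $k$ is designed to assign each short $\R$-word to exactly one edge---the window in which it appears at position~$1$---avoiding this overcounting.

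The heart of the argument is the following counting identity. Fix $w=x_1\dots x_n\in\mathcal{L}_n$ with $n\ge p$ and set $X_i=x_i\dots x_{i+p-2}$. The number of labeled edge-paths in $\G_A$ with vertex sequence $(X_1,\dots,X_{n-p+2})$ is
\[
\prod_{i=1}^{n-p+1}A_{X_iX_{i+1}}\;=\;\prod_{i=1}^{n-p+1}k(X_i*X_{i+1})\;=\;\prod_{r\in\R}m(r)^{N(w;r)},
\]
where $N(w;r)$ counts occurrences of $r$ in $w$ starting at positions $1\le j\le n-p+1$. Comparing with $m(w)=\prod_{r\in\R}m(r)^{n(w;r)}$ (which counts occurrences at all positions $1\le j\le n-|r|+1$), only occurrences starting in $[n-p+2,\,n-|r|+1]$---a range of length at most $p-|r|$---are missed. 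Hence
\[
\prod_{i=1}^{n-p+1}A_{X_iX_{i+1}}\;\le\;m(w)\;\le\;C\cdot\prod_{i=1}^{n-p+1}A_{X_iX_{i+1}},\qquad C:=\prod_{r\in\R,\,|r|<p}m(r)^{p-|r|}.
\]
This uniform sandwich---with $C$ depending only on $\F$ and $\R$---is the principal technical point of the proof.

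From here the three properties follow cleanly. For (i), since $A_{XY}\le m(X*Y)$, I relabel each edge $(XY)_i$, $1\le i\le A_{XY}$, to a distinct copy of $X*Y$ in $\Lambda_p$; this extends to an injection $\mathcal{L}(\Sigma_A)_m\hookrightarrow\Lambda_{m+p-1}$ and hence $\mathcal{L}(\Sigma_A)\subseteq\Lambda^p$. For (ii), summing the sandwich over $w\in\mathcal{L}_n$ gives $|\mathcal{L}(\Sigma_A)_{n-p+1}|\le|\Lambda_n|\le C\,|\mathcal{L}(\Sigma_A)_{n-p+1}|$; applying $\tfrac{1}{n}\ln$ and letting $n\to\infty$ (so $\tfrac{n-p+1}{n}\to 1$ and $\tfrac{\ln C}{n}\to 0$) forces both sides to the common limit $h_{\text{top}}(\Sigma_A)$. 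For (iii), when $\R\ne\emptyset$ I pick $r\in\R$ with $m(r)\ge 2$ and use irreducibility to find $X,Y\in\mathcal{L}_{p-1}$ with $X*Y$ having $r$ as a prefix, so $A_{XY}\ge 2$; strong connectivity of $\G_A$ provides a return path from $Y$ to $X$, yielding a cycle of some length $L$ through this edge, and iterating gives $|\mathcal{L}(\Sigma_A)_{kL}|\ge 2^k$, whence $h_{\text{top}}(\Sigma_A)\ge(\ln 2)/L>0$.
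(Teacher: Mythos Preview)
Your proposal is correct and follows essentially the same approach as the paper: you construct the identical matrix $A_{XY}=k(X*Y)$, establish the same sandwich between the edge-path count $\prod_i k(X_i*X_{i+1})$ and $m(w)$ (your explicit constant $C=\prod_{|r|<p}m(r)^{p-|r|}$ plays the role of the paper's $M=\max_{Y\in\mathcal{L}_p}m(Y)/k(Y)$, which is just $\max_Y m(y_2\dots y_p)$), and deduce (i)--(ii) by summing and taking limits. Your argument for (iii) via a cycle in $\G_A$ through an edge of weight $\ge 2$ is a minor variant of the paper's, which instead iterates a word $sws$ on the $\Lambda$ side; the two are equivalent once (ii) is in hand.
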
	
\begin{proof}
	Define a matrix $A=(A_{XY})_{X,Y\in \mathcal{L}_{p-1}}$ indexed by $\mathcal{L}_{p-1}$ as follows: for $X,Y\in\mathcal{L}_{p-1}$, let
	\begin{equation}\label{A_for_F_R}
		A_{XY}=\begin{cases}
			k(X*Y),&\text{ if } X*Y\in\mathcal{L}_p\\
			0,& \text{ otherwise}.
		\end{cases}
	\end{equation}
(i) Let $\G_A$ be the graph associated with the matrix $A$.
	Note that for a word $w=X*Y\in\Lambda_p$, the number of edges in $\G_A$ from vertex labelled $X$ to vertex labelled $Y$ equals $k(X*Y)$, and also $k(X*Y)\le m(w)$.  \\
	Let $n>p$. Let $w=x_1x_2\dots x_n\in\Lambda_n$ be an allowed word. Using induction on $n$, for $X_i=x_i\dots x_{i+p-2}$, we have
	\[
	m(w)=k(X_1*X_2)\dots k(X_{n-p}*X_{n-p+1})m(X_{n-p+1}*X_{n-p+2}).
	\]
	Also, the number of paths with vertices labelled $X_1,\dots, X_{n-p+2}$ (in order) in $\G_A$ is given by $k(X_1*X_2)\dots k(X_{n-p+1}*X_{n-p+2})$, which is at most $m(w)$. Hence, clearly $\mathcal{L}(\Sigma_A)\subseteq\Lambda^p$. \\
	(ii) Let $\tau(n)$ denote the number of paths of length $n$ in the graph $\G_A$, given by, 
	\[
	\tau(n-p+1)=\sum\limits_{w\in\mathcal{L}_n}k(X_1*X_2)\dots k(X_{n-p+1}*X_{n-p+2}).
	\] 
	Set $M:=\max\limits_{Y\in\mathcal{L}_{p}}\left\{\frac{m(Y)}{k(Y)}\right\}>0$. Then
	\[
	|\Lambda_n|=\sum_{w\in\mathcal{L}_n}m(w)\le M\sum_{w\in\mathcal{L}_n}k(X_1*X_2)\dots k(X_{n-p+1}*X_{n-p+2}) = M \, \tau(n-p+1).
	\]
Also by part (i), $\tau(n-p+1)\le |\Lambda_n|$, for each $n>p$. Hence,  
\[
h_{top}(\Sigma_A)=\lim_{n\rightarrow \infty} \dfrac{1}{n}\ln \tau(n)=\lim_{n\rightarrow \infty} \dfrac{1}{n}\ln |\Lambda_n|.
\]
(iii) Finally, let $r\in\R$ be a repeated word with multiplicity $m\ge 2$. Since $\Sigma_\F$ is irreducible, there exist a word $s\in\Lambda_p$ that contains $r$ and a word $sws\in\Lambda^p$. Consider the subsequence $(n_j)_{j\ge 1}$, where $n_j=j(|s|+|w|)$. The word $r$ appears in the word $swsw\dots sw\in\Lambda_{n_j}$ at least $j$ many times. 
Thus $|\Lambda_{n_j}|\ge m^j$ and hence 
\[
h_{\text{top}}(\Sigma_A) = \lim_{j\rightarrow\infty}  \dfrac{1}{n_j}\ln |\Lambda_{n_j}| \ge  \lim_{j\rightarrow\infty}  \frac{j\ln(m)}{j\left(|s|+|w|\right)}=\frac{\ln(m)}{|s|+|w|}>0.
\]
\end{proof}
\begin{definition}[Adjacency matrix associated with the collections $\F$ and $\R$]
The matrix $A$, defined by Equation~\eqref{A_for_F_R} is called the \emph{adjacency matrix associated with the collections $\F$ and $\R$}.
\end{definition}
\begin{exam}\label{exam:1}
Let $\Sigma=\{0,1\}$, $\F=\{010,101,111\}$, and $\R=\{00(2),0110(3)\}$. Here $p=4$ and $\Lambda_{3}=\{(000,4),(001,2),(011,1),(100,2),(110,1)\}$. 
The graph corresponding to the adjacency matrix $A$ associated with the collections $\F$ and $\R$ is shown in Figure~\ref{fig:G_A}.
\end{exam}
\begin{figure}[h]
\centering
\begin {tikzpicture}[-latex, auto ,node distance =2cm and 2.25cm, ,on grid, semithick, state/.style ={draw, circle}]
\node[state] (A) {$000$};
\node[state] (B) [right =of A] {$001$};
\node[state] (C) [right =of B] {$011$};
\node[state] (D) [below =of B] {$100$};
\node[state] (E) [right =of D] {$110$};
\path (A) edge [loop below] node[above] {} (A);
\path (A) edge [loop left] node[above] {} (A);
\path (A) edge [bend left =25] node[above] {} (B);
\path (A) edge [bend right =25] node[below] {} (B);
\path (B) edge [bend left =25] node[above] {} (C);
\path (B) edge [bend right =25] node[below] {} (C);
\path (C) edge [bend left =25] node[above] {} (E);
\path (C) edge node {} (E);
\path (C) edge [bend right =25] node[above] {} (E);
\path (D) edge [bend left =25] node[below] {} (A);
\path (E) edge node {} (D);
\path (D) edge node {} (B);
\end{tikzpicture}
\caption{Graph $\G_A$ for Example~\ref{exam:1}}
\label{fig:G_A}
\end{figure}
\begin{remark} A few observations about the matrix $A$:\\
(1) Since $\R$ is reduced, for $X*Y\in\mathcal{L}_p$,
\begin{equation}\label{eq:ob1}
	k(X*Y)=	\begin{cases}
		k(X),\quad &\text{if } X*Y\notin\R\\
		m(X*Y),\quad & \text{otherwise}.
	\end{cases}
\end{equation}	
(2) The non-zero entries of the row of $A$ indexed by a repeated word $X$ (with $m(X)>1$) are all equal to $k(X)$, using Equation~\eqref{eq:ob1}. \\
(3) Even though the Perron root of $A$ is given by $\lim_{n \to \infty}|\Lambda_n|^{1/n}$, note that $|\Lambda_n|$ may not be equal to the sum of entries of $A^{n-p+1}$. This equality holds when words in $\R$ are all of equal length $p$, by Theorem \eqref{thm:lang}. \\
(4) One can define a new graph by the outsplitting of $A$ at each vertices with multiplicity at least 2 (we refer to~\cite{LM_book} for the definition of outsplitting) and then adding a few stranded vertices (i.e., with no outgoing edges) so that the total number of paths on this new graph is exactly equal to as in $\Lambda^p$. The shift on this graph and $\Sigma_A$ are conjugates, since the stranded vertices do not contribute to the infinite sequences. Hence we can prove the existance of a graph where finite paths completely describle $\Lambda^p$.
\end{remark}

\begin{exam}\label{exam:diff}
Let us review the example given in Remark~\ref{rem:full_matrix}. Consider $\Sigma=\{0,1\}, \ \F=\{00\}$ and $\R=\{110(2),01(3)\}$. Here $p=3$ and the adjacency matrix $A$ (indexed by $\{01,10,11\}$) associated with $\F$ and $\R$ is given by $\begin{pmatrix}
0&3&3\\1&0&0\\0&2&1
\end{pmatrix}$, and its associated graph is shown in Figure~\ref{fig:Graph_for_F_R}.
\begin{figure}[h]
\centering
$ \begin{tikzpicture}[-latex, baseline=-1.2cm, auto=left, node distance 					=2cm and 2.5cm ,on grid, semithick, state/.style ={draw, circle}]
\node[state,scale=0.8] (A) {$01$};
\node[state,scale=0.8] (B) [right =of A] {$10$};
\node[state,scale=0.8] (C) [below =of A] {$11$};
\path (C) edge [loop below] node[above] {} (C);
\path (A) edge [bend left =25] node[above] {} (B);
\path (A) edge [bend left =50] node[above] {} (B);
\path (A) edge [bend left =75] node[above] {} (B);
\path (A) edge [bend right =25] node[left] {} (C);
\path (A) edge [bend right =50] node[left] {} (C);
\path (A) edge [bend right =75] node[left] {} (C);
\path (B) edge [bend right =-25] node[below] {} (A);
\path (C) edge [bend left =-50] node[above] {} (B);
\path (C) edge [bend left =-75] node[above] {} (B);
\end{tikzpicture}
$
\caption{Graph $\G_A$ for Example~\ref{exam:diff}}
\label{fig:Graph_for_F_R}
\end{figure}
Note that $h_{\text{top}}(\Sigma_A)=\lim_{n \to \infty}\frac{1}{n}\ln(|\Lambda_n|)\sim\ln(2.6)$. Here sum of entries of $A$ is 9 but $|\Lambda_3|=|\{(010,3),(011,3),(101,3),(110,2),(111,1)\}|=12$.
\end{exam}

\noindent Consider the following matrix $\tilde{A}$, indexed by $\mathcal{L}_{p-1}$, given by 
\[\tilde{A}_{XY} = 
\begin{cases}
m(X*Y), & X*Y\in\mathcal{L}_p \\
0, & \text{ otherwise}.
\end{cases}
\]	
Clearly $\mathcal{L}(\Sigma_{\tilde{A}}) \supseteq \Lambda^p$, where $\mathcal{L}(\Sigma_{\tilde{A}})$ denotes the language of $\Sigma_{\tilde{A}}$. When all the words in $\R$ have length $p$, then the adjacency matrix associated with $\F$ and $\R$ equals $\tilde{A}$. However, as in Example~\ref{exam:diff}, the matrices $A$ and $\tilde{A}$ can be different.

Let $\mathcal{L}(S)$ denote the language of an SFT $S$ with symbols from $\Sigma$. We will visualize $S$ using its $p$-block presentation. Let $\mathcal{S}$ be the collection of all subshifts of finite type $S$ such that $\mathcal{L}(S)\subset \Lambda^{p}$.  Clearly $\Sigma_A\in\mathcal{S}$ where $A$ is the adjacency matrix associated with the collections $\F$ and $\R$.
Let $S\in\mathcal{S}$. If $\tau_S(n)$ denotes the number of allowed words of length $n$ in $S$, then $\tau_S(n)\le |\Lambda_n|$. Hence $h_{top}(S)\le \ln \theta$. Moreover, we have the following result.


\begin{theorem}
There is no subshift of finite type $S\in\mathcal{S}$ that satisfies $\mathcal{L}(\Sigma_A)\subsetneq  \mathcal{L}(S)\subsetneq\Lambda^p.  $
\end{theorem}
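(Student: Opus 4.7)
The plan is to show that any $S \in \mathcal{S}$ with $\mathcal{L}(\Sigma_A) \subseteq \mathcal{L}(S) \subseteq \Lambda^p$ must in fact satisfy $\mathcal{L}(S) = \mathcal{L}(\Sigma_A)$, which immediately rules out any strict intermediate. First I would pass to the $p$-block presentation and write $S = \Sigma_B$ for a non-negative integer matrix $B$ indexed by $\mathcal{L}_{p-1}$, padding with zero rows and columns for any vertex not actually used by $\G_B$. The hypothesis $\mathcal{L}(\Sigma_A) \subseteq \mathcal{L}(S)$ translates entry-wise to $A_{XY} \le B_{XY}$, while $\mathcal{L}(S) \subseteq \Lambda^p$ forces $B_{XY} = 0$ whenever $X*Y \notin \mathcal{L}_p$ and, more importantly, requires that for every walk $X_1 \to X_2 \to \cdots \to X_{k+1}$ in $\G_B$ the number $\prod_{i=1}^{k} B_{X_i X_{i+1}}$ of edge-path realizations of $w = X_1 * X_2 * \cdots * X_{k+1}$ is at most $m(w)$.

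Next I would invoke the identity
\[
m(w) \;=\; \Bigl(\prod_{i=1}^{k} A_{X_i X_{i+1}}\Bigr)\, m(X_{k+1}),
\]
which is essentially the inductive formula established in the proof of Theorem~\ref{thm:same_roots}(i), combined with $m(X_i * X_{i+1}) = k(X_i * X_{i+1})\, m(X_{i+1}) = A_{X_i X_{i+1}}\, m(X_{i+1})$. Setting $c_{XY} := B_{XY}/A_{XY}$ wherever $A_{XY} > 0$, one has $c_{XY} \ge 1$, and the walk constraint becomes
\[
\prod_{i=1}^{k} c_{X_i X_{i+1}} \;\le\; m(X_{k+1}) \qquad \text{for every walk } X_1 \to \cdots \to X_{k+1} \text{ in } \G_A.
\]

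The core step is an amplification along a closed walk. Assume, toward a contradiction, that $c_{X_0 Y_0} > 1$ for some allowed edge $(X_0, Y_0)$. I would first check that $\G_A$ is strongly connected: given $X, Y \in \mathcal{L}_{p-1}$, irreducibility of $\Sigma_\F$ yields a word $u$ with $XuY \in \mathcal{L}$, and the successive length-$(p-1)$ windows of $XuY$ supply an explicit walk from $X$ to $Y$ in $\G_A$. Therefore there is a cycle $\pi$ in $\G_A$ through the edge $(X_0, Y_0)$. For every $\ell \ge 1$, the closed walk $\pi^\ell$ based at $X_0$ traverses $(X_0, Y_0)$ exactly $\ell$ times, so the displayed inequality forces
\[
c_{X_0 Y_0}^{\ell} \;\le\; \prod_{\text{edges of }\pi^\ell} c \;\le\; m(X_0).
\]
Since $c_{X_0 Y_0} > 1$ the left-hand side tends to infinity with $\ell$, while the right-hand side is a fixed finite number, a contradiction. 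Hence $c_{XY} = 1$ on every allowed edge, so $B = A$ and $\mathcal{L}(S) = \mathcal{L}(\Sigma_A)$, contradicting strict inclusion.

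The main obstacle is the bookkeeping rather than any conceptual leap: one must carefully extract the product identity from the inductive formula of Theorem~\ref{thm:same_roots}(i), verify that $A$ and $B$ can be compared on a common vertex set $\mathcal{L}_{p-1}$, and justify strong connectivity of $\G_A$ from irreducibility of $\Sigma_\F$. Once these are in place the cycle-amplification is immediate, and the rigidity statement drops out.
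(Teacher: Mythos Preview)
Your argument is correct but follows a genuinely different route from the paper. The paper's proof is a two-line entropy comparison: passing to a sufficiently high block level so that the adjacency matrices $A'$ and $A_S'$ of $\Sigma_A$ and $S$ are entrywise comparable with $A' \lneq A_S'$, Perron--Frobenius monotonicity gives $\theta < \lambda_S$, i.e.\ $h_{\mathrm{top}}(\Sigma_A) < h_{\mathrm{top}}(S)$, which contradicts the bound $h_{\mathrm{top}}(S) \le \ln\theta$ already recorded for every $S \in \mathcal{S}$ just before the theorem. Your approach is instead a direct rigidity argument: you extract from the proof of Theorem~\ref{thm:same_roots}(i) the product identity $m(w) = \bigl(\prod_i A_{X_iX_{i+1}}\bigr) m(X_{k+1})$, rewrite the containment constraint as $\prod_i c_{X_iX_{i+1}} \le m(X_{k+1})$ with $c_{XY} = B_{XY}/A_{XY} \ge 1$, and then amplify along a cycle to force every $c_{XY} = 1$, hence $B = A$.

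What each buys: the paper's argument is shorter and leans on machinery (Perron--Frobenius strict monotonicity and Theorem~\ref{thm:same_roots}(ii)) that is already in place, so it is the natural choice in context. Your argument is more elementary in that it avoids any spectral input, and it actually proves the sharper statement that the matrix $B$ is forced to equal $A$ rather than merely deriving an entropy contradiction. Your verification that $\G_A$ is strongly connected and that $B_{XY}=0$ whenever $X*Y\notin\mathcal{L}_p$ (so the ratios $c_{XY}$ are always well defined on edges of $\G_A$) is sound. The one place where both your write-up and the paper's are equally informal is the passage from an abstract $S\in\mathcal{S}$ to a matrix presentation on the vertex set $\mathcal{L}_{p-1}$; you handle this by ``padding with zero rows and columns,'' which is in line with the paper's own convention of visualizing $S$ through its $p$-block presentation.
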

\begin{proof}
Suppose there is such a subshift $S$. Let $t>1$ be the length of the shortest word in $\mathcal{L}(S)\setminus\mathcal{L}(\Sigma_A)$. Consider the adjacency matrices $A',A_S'$ corresponding to the $(t-1)$-step shift of both $\Sigma_A$ and $S$ respectively. Note that since $A'<A_S'$, their respective Perron roots satisfy $\theta<\lambda_S$. Hence $\ln \theta \lneq h_{top}(S)$ which is a contradiction since $\mathcal{L}(S)\subsetneq\Lambda^p$. 
\end{proof}

\begin{remark}\label{rem:new_R}
If the collection $\R$ has words of length strictly smaller than $p$, then we define a new extended collection
\[
\tilde{\R}=\{X*Y\ :\ X,Y\in\Lambda_{p-1},\ X*Y\in\Lambda_p \text{ and begins with a word from $\R$}\}.
\]
For $X*Y \in \tilde{\R}$, define its multiplicity to be $k(X*Y)$. Clearly, the adjacency matrix associated with $\F$ and $\tilde{\R}$ is the same as the adjacency matrix $A$ associated with $\F$ and $\R$. Since all the words in $\tilde{\R}$ are of length $p$, by Theorem \ref{thm:lang}, $\mathcal{L}(\Sigma_A)$ can be characterized using the generalized language associated with this new collection $\F$ and $\tilde{\R}$.
\end{remark}

\medskip

\noindent
For the rest of the paper, let us fix $q \ge 2$, a finite symbol set $\Sigma$ of size $q$ and reduced collections $\mathcal{F} = \{a_{1},\dots, a_{s}\}$ of forbidden words and $\mathcal{R} = \{ r_{1}(m_{1}), \dots, r_{\ell}(m_{\ell}) \}$ of repeated words with symbols from $\Sigma$. Here $m_{j} > 1$ for all $1 \le j \le \ell$. Fix $p$ to denote the length of the longest word in $\F \cup \R$ and $\Lambda^p = \cup_{n\ge p} \lambda_{n}$ be the generalized language associated with $\F$ and $\R$.  

\section{Enumeration of words in the generalized language}\label{sec:recurrence relations}
In this section, we study the asymptotic behaviour of $|\Lambda_{n}|$ (see Definition~\ref{def:lambda_n}) for given $\F$ and $\R$, using combinatorial tools. This gives the entropy of the subshift $\Sigma_A$ obtained in Theorem \ref{thm:same_roots}. We consider two cases depending on whether $\F \cup \R$ is reduced or not. 

\noindent
Let $u$ and $v$ be two words with symbols from $\Sigma$ and let $(u, v)=(c_{1},\dots,\,c_{|u|})$ be the correlation between $u$ and $v$ (see Definition~\ref{def:corr}). 

\begin{notation}
We say that $t \in (u,v)$ for $t > 0$, if the $t^{th}$ element in $(u,v)$ counted from the right is 1, that is, $c_{|u|-t+1} =1$. Thus the correlation polynomial can be written as
\[
(u,v)_{z} \  = \sum\limits_{0 \,< \,t \,\in \,(u,v)} z^{t-1}.
\]  
\end{notation}
Let $\mathcal{L}_n$ be the set of all allowed words of length $n$ in $\Sigma_\F$ and let $w = x_{1}x_{2} \dots x_{n}\in\mathcal{L}_n$. Observe that if one of the given repeated words $r_{j}$ is a subword of $w$, then there exists $t \in (w, r_{j})$ such that $t \ge |r_{j}|$. Let $f(n)=\sum_{w\in\mathcal{L}_n}m(w) = |\Lambda_n|$ (see Definition~\ref{def:lambda_n}). By convention, let $f(0) = 1$. 

For each $1\le j\le \ell$, let $g_{r_{j}}(n)=\sum_wm(w)$, where summation is over all the words $w \in \mathcal{L}_{n}$ which end with a repeated word $r_{j} \in \mathcal{R}$. Note that such a word $w$ can have more repeated words as subwords. By convention, let $g_{r_j}(0) = 0$. 

Although the multiplicity of any forbidden word is $0$ by Definition~\ref{def:multi}, we next redefine the multiplicity of certain kinds of forbidden words $w$, only limited to this section. For convenience of notation, we denote it also as $m(w)$.

\begin{definition}\label{def:multi_forb}
[Multiplicity of a word which contains a forbidden word as a subword only at the end -- \textit{only limited to the discussion in Section~\ref{sec:recurrence relations}}] Let $w$ be a forbidden word in $\Sigma_\F$ such that $w$ ends with $a_i \in \F$ and this $a_i$ at the end is the only occurrence of a forbidden word in $w$. We define multiplicity of $w$, denoted as $m(w)$, by
\[m(w) := \frac{\prod_{r_j\in\R} m_j^{n(w;r_j)}}{\prod_{r_j\in\R} m_j^{n(a_i;r_j)}}, \]
where $n(u;v)$ denotes the number of times the word $v$ appears as a subword of the word $u$. In particular, for each $1\le i\le s$, $m(a_i)=1$.
\end{definition}

For $1 \le i \le s$, let $f_{a_{i}}(n)=\sum_w m(w)$, where the summation is over all the words $w$ of length $n$ with symbols from $\Sigma$, which end with a forbidden word $a_{i} \in \mathcal{F}$, and moreover, this $a_i$ is the only occurrence of a forbidden word in $w$. By convention, $f_{a_i}(0) = 0$.

The corresponding generating functions for $f(n)$, $g_{r_{i}}(n)$ and $f_{a_{i}}(n)$ are defined as follows.
\[ F(z) \ = \sum\limits_{n \ge 0} f(n) z^{-n} , \quad G_{r_{j}}(z) \ = \sum\limits_{n \ge 0} g_{r_{j}}(n) z^{-n} , \ \ 1 \le j \le \ell,\]
\[ F_{a_{i}}(z) \ = \sum\limits_{n \ge 0} f_{a_{i}}(n) z^{-n} , \ \ 1 \le i \le s. \]

When $\R$ is an empty set, the generating functions $F$ and $F_{a_i}$ are described in Theorem~\ref{thm:Guibas}. In the general case, the generating functions $F, F_{a_i}, G_{r_j}$ will be described in Theorem~\ref{thm:reduced_gen_fun}, when $\F\cup\R$ is a reduced collection, and in Theorem~\ref{thm:non_reduced_gen_fun}, when $\F\cup\R$ is not a reduced collection.\\
In the following remark, we set up new terminologies which we will use in this section, and also make some related observations.  

\begin{remark}
Let $w\in\Lambda_n$.
\begin{enumerate}
\item By definition of $f(n)$, we say that $w$ gets counted in $f(n)$ or simply, $w$ is in $f(n)$.
\item We say that $w $ gets counted in $f_{a_{i}}(n+t)$ (or $g_{r_{j}}(n+s)$) for $t \le |a_{i}|$ (or $s \le |r_{j}|$) if there exists a word $w'$ in $f_{a_{i}}(n+t)$ (or $g_{r_{j}}(n+s)$) which begins with $w$. Note that this $w'$ is unique for a given $w$ when considered without the multiplicities.
\item For $w$ and $w'$ as above, we say that $w$ gets counted $m(w')$ times in $f_{a_{i}}(n+t)$ (or $g_{r_{j}}(n+s)$). Note that $m(w') \ge m(w)$, and moreover if a repeated word appears at some position in $w'$ and not as a subword of $w$, then $m(w') > m(w)$.
%
\item A repeated word $r_{j}$ in $w'$ gives $m_{j}-1$ many extra counting of $w$ in $f_{a_{i}}(n+t)$ (or $g_{r_{j}}(n+s)$) if $r_{j}$ appears in $w'$ other than being a subword of $w$. 
\item 	For any two words $u$ and $v$, let $uv$ denote the word obtained by concatenation of $v$ as a suffix to $u$. We say that a word $w$ appears on the join of $u$ and $v$ if there exists $t>0$ such that $0 < |w| -t \in (u,w)$ and $t \in (w, v)$. That is, some initial part of $w$ overlaps with the end part of $u$ and the remaining part of $w$ overlaps with the beginning part of $v$.
\end{enumerate}
\end{remark}

Suppose now that the collection $\mathcal{R}$ of words with repetitions is non-empty. Consider the collection $\F\cup\R$. In Subsection~\ref{subsec:case1_reduced}, we will discuss the first case where $\F\cup\R$ is reduced. In this case, any forbidden word from $\F$ does not contain a word from $\mathcal{R}$ as its subword. The situation gets complex in general, when $\F\cup\R$ is not reduced, that is, some repeated word sits inside a forbidden word as a subword. This will be discussed in Subsection~\ref{subsec:case2_non_reduced}.

\subsection{When $\F\cup\R$ is reduced} \label{subsec:case1_reduced}

In this case, the collections $\F \ \text{and } \R$ will be automatically reduced and no repeated word in $\R$ appears as a subword of a word in $\F$. 

\begin{theorem}\label{thm:reduced_gen_fun}
Let $\mathcal{F} = \{a_{1},\dots, a_{s}\}$ and $\mathcal{R} = \{ r_{1}(m_{1}), \dots, r_{\ell}(m_{\ell}) \}$ be the collections of forbidden and repeated words respectively, where $m_{j} > 1$ for all $1 \le j \le \ell$. If $\F\cup\R$ is reduced, then the generating functions $F(z), G_{r_{j}}(z)$ and $F_{a_{i}}(z)$ satisfy the following system of linear equations.
\begin{eqnarray}
(z-q) F(z)  - \sum\limits_{j = 1}^{\ell} z\left( 1 - \frac{1}{m_{j}} \right)G_{r_{j}}(z) + \sum\limits_{i = 1}^{s} z F_{a_{i}}(z) & =  & z, 	\label{eq:reduced1}\\
F(z)  +   \sum\limits_{j=1}^{\ell} \left[ z \left( 1 - \frac{1}{m_{j}} \right) (r_{j}, r_{k})_{z} - z^{|r_{j}|} \delta_{jk}  \right] G_{r_{j}}(z) - \sum\limits_{i = 1}^{s} z (a_{i}, r_{k})_{z} F_{a_{i}}(z) & = & 0,  \label{eq:reduced2}  \\
&& \text{for } 1 \le k \le \ell, 			\nonumber  \\
F(z) +  \sum\limits_{j = 1}^{\ell} \left[ z \left(1 - \frac{1}			{m_{j}} \right) (r_{j}, a_{k})_{z}   \right] G_{r_{j}}(z) - \ \sum\limits_{i = 1}^{s} z (a_{i}, a_{k})_{z} F_{a_{i}}(z) \ & = 				& \ 0, \ \label{eq:reduced3}\\
&& \text{for } 1 \le k \le s,				\nonumber
\end{eqnarray}
where $\delta_{jk} = 1$, if $j=k$, otherwise $\delta_{jk} = 0$.
\end{theorem}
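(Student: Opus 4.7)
My plan is to verify each of the three equations by extracting, for every $m \ge -1$, the coefficient of $z^{-m}$ on both sides and translating the identity into a combinatorial statement on $f(n)$, $g_{r_j}(n)$, and $f_{a_i}(n)$. For equations \eqref{eq:reduced2} and \eqref{eq:reduced3} it is convenient to substitute $n = m + |r_k|$ and $n = m + |a_k|$, respectively, so that the identities relate quantities indexed by word-length $n$.

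For equation \eqref{eq:reduced1} I would use an ``append a symbol'' bijection. Consider the multiset of pairs $(w, x) \in \Lambda_{n-1} \times \Sigma$ with each pair counted with weight $m(w)$; the total weight is $q\,f(n-1)$. Because $\F \cup \R$ is reduced, each concatenated word $wx$ falls into exactly one of three mutually exclusive classes: (a) $wx \in \mathcal{L}_n$ with no new repeated word ending at position $n$, so $m(wx) = m(w)$; (b) $wx \in \mathcal{L}_n$ with a unique new $r_j \in \R$ appearing as a suffix, which forces $m(wx) = m_j\,m(w)$, so the weighted count of such pairs equals $g_{r_j}(n)/m_j$; (c) $wx$ ends with some $a_i \in \F$ as its unique forbidden subword, in which case Definition~\ref{def:multi_forb} together with reducedness yields $m(wx) = m(w)$ and the weighted count equals $f_{a_i}(n)$. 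Summing the three contributions gives the desired coefficient identity for equation \eqref{eq:reduced1}.

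For equations \eqref{eq:reduced2} and \eqref{eq:reduced3} I would adapt the Guibas--Odlyzko ``append and decompose'' technique. Fix $k$. For each allowed $v$ of length $n - |r_k|$ (resp.\ $n - |a_k|$), counted with weight $m(v)$, consider the word $W = v\,r_k$ (resp.\ $W = v\,a_k$) of length $n$. A forbidden or repeated word straddling the join at offset $t$ must satisfy $t \in (a_i, r_k)$ or $t \in (r_j, r_k)$ (resp.\ $(a_i, a_k)$ or $(r_j, a_k)$), which is precisely how the correlation polynomials enter. I would classify $W$ by its leftmost forbidden subword, if any, and decompose accordingly: in the $r_k$ case either $W$ is allowed, with multiplicity $m_k\, m(v) \prod_j m_j^{c_j(v)}$ (where $c_j(v)$ is the number of $r_j$-crossings at the join) contributing to $g_{r_k}(n)$; or else the leftmost forbidden $a_i$ ends at position $n - |r_k| + t$ with $t \in (a_i, r_k)$, $t < |a_i|$, so that the prefix of $W$ of length $n - |r_k| + t$ lies in $F_{a_i}$. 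Rewriting each crossing factor as $m_j = 1 + (m_j - 1)$ and regrouping produces the $(1 - 1/m_j)$ coefficients after the $m_k$ from the $r_k$-suffix is divided out; the term $-z^{|r_j|}\delta_{jk}\,G_{r_j}(z)$ records the fact that the full overlap $t = |r_k|$ in $(r_k, r_k)$ coincides with the $r_k$ at the end of $W$ and must not be double-counted. Equation \eqref{eq:reduced3} is analogous except that $v a_k$ is never allowed, so the ``allowed $W$'' branch disappears and no $\delta$ correction is needed.

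The main obstacle will be the bookkeeping of multiplicities when several repeated words simultaneously cross the join. The reducedness hypothesis on $\F \cup \R$ is essential here: it prevents two straddling occurrences (two $r_j$'s at the same end-position, or an $r_j$ and an $a_i$, or an $a_i$ and the $r_k$ at the end) from being one a suffix of the other, and thereby allows each crossing to be enumerated independently by a single correlation parameter, producing the clean $(1 - 1/m_j)$ form in the final identity.
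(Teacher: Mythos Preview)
Your overall strategy coincides with the paper's: for \eqref{eq:reduced1} the paper also appends a single symbol $x$ to each $w$ counted in $f(n)$ and sorts $wx$ into $f(n+1)$, $f_{a_i}(n+1)$, or the ``new $r_j$ at the end'' class; for \eqref{eq:reduced2} and \eqref{eq:reduced3} the paper likewise appends $r_k$ (respectively $a_k$) to $w$, classifies $wr_k$ by its first forbidden subword on the join, and then corrects for the overcounting coming from repeated words straddling the join via the correlations $(r_j,r_k)$ and $(a_i,r_k)$.

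The one place where your sketch departs from the paper---and where I think you would run into trouble---is the derivation of the $(1-1/m_j)$ coefficients. You propose to expand each crossing factor as $m_j = 1 + (m_j-1)$ and ``regroup''. But if several repeated words $r_{j_1},\dots,r_{j_p}$ cross the join, the multiplicity of $wr_k$ picks up the product $m_{j_1}\cdots m_{j_p}$, and expanding $\prod_q(1+(m_{j_q}-1))$ gives a sum over \emph{subsets} of crossings, not the linear sum $\sum_j (1-1/m_j)(r_j,r_k)_z\,G_{r_j}(z)$ in the statement. The paper avoids this by a telescoping subtraction: for each crossing of $r_j$ ending at position $n+s$, the prefix $w'$ of $wr_k$ of that length is itself an allowed word ending in $r_j$ with $m(w')=m_j\cdot m(\text{prefix before }r_j)$; summing $(1-1/m_j)\,m(w')=(m_j-1)\,m(w')/m_j$ over the crossings telescopes exactly to the excess $m(wr_k)-m(w)$. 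That is the mechanism producing the $(1-1/m_j)$, not a product expansion. Once you replace your regrouping step by this telescoping observation, your argument and the paper's are identical.
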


\begin{proof}
In order to obtain the first equation in the above system, let the word $w$ be counted in $f(n)$ and $x \in \Sigma$ be a symbol. Consider the concatenation of $w$ and $x$ given by $wx$. Then the word $wx$ is counted with multiplicities in either $f(n+1)$, or in $f_{a_{i}}(n+1)$, for some $a_{i} \in \F$. Therefore
\begin{equation}
\label{eq:step1}
\sum_{x \in \Sigma} \sum_{w\in\mathcal{L}_n}m(wx) =  f(n+1) + \sum\limits_{i = 1}^{s} f_{a_{i}}(n			+1).
\end{equation}

To determine the quantity on the left hand side, observe that for each $x \in \Sigma$, the word $wx$ is counted at least $m(w)$ times, which gives total of $q f(n)$ counting. Moreover, only those $wx$ which end with some repeated word $r_{j} \in \mathcal{R}$, are counted $m_{j} - 1$ times extra (one counting is already incorporated in $qf(n)$). The word $wx$ ends with a repeated word if and only if the last symbol of $r_j$ is $x$ and a tail of $w$ matches with the beginning subword of $r_j$ of length $|r_{j}| -1$, that is, $|r_{j}| -1 \in (w, r_{j})$. Hence
\[ 
m_j\sum\limits_{\{ w\in\mathcal{L}_n\ :\ |r_j|-1\in(w,r_j)\}}m(w)= g_{r_{j}}(n+1).			\]

Since each $w$ satisfying $|r_{j}|-1 \in (w, r_{j})$ gets counted extra $m_{j}-1$ times when concatenated with an $x$ same as the last symbol of $r_{j}$ at the end, we have
\begin{equation}
\label{eq:step1_LHS}
\sum_{x \in \Sigma} \sum\limits_{w\in\mathcal{L}_n}m(wx) = \ q  f(n)  +  \sum\limits_{j=1}^{\ell}  \frac{g_{r_{j}}(n+1)}{m_{j}} (m_{j}-1). 
\end{equation}
Combining~\eqref{eq:step1} and~\eqref{eq:step1_LHS}, we get
\begin{equation}
\label{eq:step1_final}
q f(n) -  f(n+1) + \sum\limits_{j=1}^{\ell} \left( 1 -\frac{1}{m_{j}} \right) g_{r_{j}}(n+1)  - \sum\limits_{i = 1}^{s} f_{a_{i}}(n+1) \ = \ 0.
\end{equation}
On multiplying both the sides of~\eqref{eq:step1_final} by $z^{-n}$, taking summation over $n \ge 0$, making use of the conventions $f(0) = 1$, $f_{a_{i}}(0) = g_{r_{j}}(0) = 0$, we obtain first equation \eqref{eq:reduced1} in the system.\\~\\
Next, consider an allowed word $w$ counted in $f(n)$, $r_{k} \in \mathcal{R}$, and the word $w r_{k}$ obtained by concatenation. If $w r_{k}$ does not contain any forbidden word, then $w$ is counted in $g_{r_{k}}(n+|r_{k}|)$. Note that it may be counted more than the multiplicity $m(w)$, due to possible appearance of repeated words from $\R$ on the join and at the end. We will take care of these extra counting later. 

Further if some forbidden word from $\F$ appears in $w r_{k}$, then we look at the first occurrence of a forbidden word in $w r_{k}$. Suppose the forbidden word from $\F$ which occurs first is $a_{i}\in\F$. Since $w$ does not contain a forbidden word and since $\F\cup\R$ is reduced, the word $a_{i}$ appears on the join of $w$ and $r_{k}$. Thus there exists $0 < t \in (a_{i}, r_{k})$ such that $w$ is counted in $f_{a_{i}}(n+t)$ (counted more than $m(w)$ times if repeated words appear on the join and before the placement of the word $a_i$).

Now, we need to take away the extra counting of the words as mentioned before. Figure~\eqref{fig:reduced_rep_forb} illustrates situations like these. Suppose some $r_{j} \in \mathcal{R}$ appears on the join of $w$ and $r_{k}$, then there exists $0<s \in (r_{j}, r_{k})$. The number of words $w$ of length $n$ that give $r_{j}$ on the join (and no forbidden word before the placement of $r_j$) is precisely $\frac{g_{r_{j}} (n+s)}{m_{j}}$. But, each such $w$ gets counted $m_{j} - 1$ times more in $g_{r_{j}}(n+s)$, due to the multiplicity of the occurrence of $r_{j}$ at the end. Thus, we subtract such extra counting $\frac{g_{r_{j}} (n+s)}{m_{j}} (m_{j} - 1)$ to obtain
\begin{equation}
\label{eq:step2_final}
f(n)  \ =  \ g_{r_{k}}(n + |r_{k}|) \ + \ \sum\limits_{i=1}^{s} \ \sum\limits_{0 \, < \, t \, \in \, (a_{i}, r_{k})} f_{a_{i}}(n+t)\ - \ \sum\limits_{j = 1}^{\ell} \ \sum\limits_{0 \, < \, s \, \in \, (r_{j}, r_{k})} \left( 1 - \frac{1}{m_{j}} \right) g_{r_{j}} (n+s).
\end{equation}

The condition that $\F\cup\R$ is reduced implies that $t < |a_{i}|$ and $s < |r_{j}|$ in~\eqref{eq:step2_final}. Multiply the above equation by $z^{-n}$ and take sum over $n \ge 0$ to obtain~\eqref{eq:reduced2}.

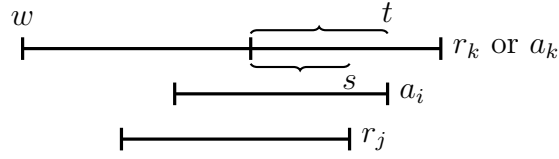
\begin{figure}[h!]
\begin{center}
\begin{tikzpicture}
\beginpgfgraphicnamed{situation-b}
\begin{scope}[very thick]
\draw[snake=brace] (3,0.2) -- (4.8,0.2) [thick] node[above] 						{$t$};
\draw (0,-0.15) -- (0,0.15)  node[anchor=south] {$w$} ;
\draw (0,0) -- (3,0) -- (5.5,0)  node[anchor=west] {$r_{k}$ or $a_{k}$};
\draw (3,-0.15) -- (3,0.15);
\draw (5.5,-0.15) -- (5.5,0.15);
\draw[snake=brace, mirror snake] (3,-0.2) -- (4.3,-0.2) 							[thick] node[below] {$s$};
\draw (2,-0.6) -- (4.8,-0.6) node[anchor=west] {$a_{i}$};
\draw (2,-0.75) -- (2,-0.45);
\draw (4.8,-0.75) -- (4.8,-0.45);
\draw (1.3,-1.2) -- (4.3,-1.2) node[anchor=west] {$r_{j}$};
\draw (1.3,-1.05) -- (1.3,-1.35);
\draw (4.3,-1.05) -- (4.3,-1.35);
\end{scope}
\endpgfgraphicnamed
\end{tikzpicture}
\caption{Words $a_{i}$ and $r_{j}$ appear on the join, with $r_{j}$ placed before $a_{i}$.}
\label{fig:reduced_rep_forb}
\end{center}
\end{figure}
Finally, in order to obtain the last set of equations~\eqref{eq:reduced3}, consider the concatenated word $wa_k$, where $w$ is counted in $f(n)$ and $a_{k} \in \F$. Suppose $a_{i}$ is the first occurrence of a forbidden word in $w a_{k}$. Then such $a_{i}$ appears either on the join of $w$ and $a_{k}$, or the attached $a_{k}$ at the end is the only occurrence of a forbidden word in $w a_{k}$. If $a_{i}$ appears on the join then there exists $0<t \in (a_{i}, a_{k})$ such that $w$ is counted in $f_{a_{i}}(n+t)$. Clearly there will be some extra counting due to possible appearances of repeated words on the join of $wa_k$, same as the previous case, also shown in Figure~\ref{fig:reduced_rep_forb}. Therefore the same counting argument described previously applies here and we obtain,
\begin{equation}
\label{eq:step3_final}
f(n)\ = \ \sum\limits_{i=1}^{s} \ \sum\limits_{0 \, < \,t \, \in \, (a_{i}, 		a_{k})} f_{a_{i}}(n+t)\ - \ \sum\limits_{j = 1}^{\ell} \ \sum\limits_{0 \, < 	\, s \, \in \, (r_{j}, a_{k})} \left( 1 - \frac{1}{m_{j}} \right)g_{r_{j}} 		(n+s).
\end{equation}

To complete the proof, we multiply~\eqref{eq:step3_final} by $z^{-n}$ and take sum over $n \ge 0$ to obtain the last equation~\eqref{eq:reduced3} in the system.
\end{proof}

\begin{exam}\label{exam:reduced}
We illustrate Theorem~\ref{thm:reduced_gen_fun} using this example. For $q = 2$, let $\mathcal{R} = \left\lbrace 000 (2) \right\rbrace$ and $\mathcal{F} = \left\lbrace 010 \right\rbrace$. Clearly $\F\cup\R$ is a reduced collection. Denote $r_1 = 000$ with $m_1=2$ and $a_1 = 010$. Then, $1 \in (a_1,r_1)$, $1,2,3 \in (r_1,r_1)$, $1,3 \in (a_1,a_1)$ and $1 \in (r_1,a_1)$. Equations ~\eqref{eq:step1_final},~\eqref{eq:step2_final} and ~\eqref{eq:step3_final} can be verified using the values given in the table below.

%

%
\begin{center}
\begin{tabular}{ | c | c c c c c c c c  c  c | }
\hline
$n$  & 1 & 2 & 3 & 4 & 5 & 6 & 7 & 8 & 9 & 10 \\
\hline 
$f(n)$ & 2 & 4 & 8 & 17 & 37 & 81 & 178 & 392 & 864 & 1905 \\
\hline
$g_{r_{1}}(n)$ & 0 & 0 & 2 & 6 & 14 & 32 & 72 & 160 & 354 & 782 \\
\hline
$f_{a_{i}}(n)$ & 0 & 0 & 1 & 2 & 4 & 9 & 20 & 44 & 97 & 214 \\
\hline
\end{tabular}
\end{center}	
\end{exam} 

%

\subsubsection*{Matrix representation of the system of linear equations obtained in Theorem~\ref{thm:reduced_gen_fun}}
Consider the matrix function $L(z)$ given as
\begin{equation*}
\label{eq:matrix_L}
L(z) \ = \ \begin{pmatrix}
\begin{matrix}
z-q 
\end{matrix}
& \rvline & \begin{matrix}
-z(1-\frac{1}{m_{1}}) & \dots & -z(1-\frac{1}{m_{\ell}}) & z \mathds{1}_{s}^{T}
\end{matrix} \\
\hline
\begin{matrix}
\mathds{1}_{\ell + s}
\end{matrix} & \rvline & \begin{matrix}
\ & \ & P(z) & \
\end{matrix} \\
\end{pmatrix},
\end{equation*}
where $\mathds{1}_m$ denotes the column matrix of size $m$ with all entries $1$, $P(z)=[P(z)_{ij}]_{1\le i,j\le \ell+s}$ with
\begin{equation}\label{eq:matrix_P}
P(z)_{ij}  = \begin{cases}
z(1 - \frac{1}{m_{j}})(r_{j}, r_{i})_{z} - \delta_{ij} z^{|r_{j}|}, \ \ \ & 1 \le i, j \le \ell, \\
- z(a_{j}, r_{i})_{z}, \ \ \ & 1 \le i \le \ell, \ \ell+1 \le j \le \ell+s,  \\
z(1 - \frac{1}{m_{j}})(r_{j}, a_{i})_{z}, \ \ \ & \ell+1 \le i \le \ell+s, \, 1 \le j \le \ell, \\
- z(a_{j}, a_{i})_{z}, \ \ \ & \ell+1 \le i, j \le \ell+s
\end{cases},
\end{equation}
where $\delta_{ij} = 1$ if $i = j$, else $\delta_{ij} = 0$. \\
Using the matrix function $L(z)$, the linear system of equations in Theorem~\ref{thm:reduced_gen_fun} can be expressed as				\begin{equation*}
\label{eq:matrix_system}
L(z) \begin{pmatrix}
F(z) \\ G_{r_{1}}(z)\\ \vdots \\ G_{r_{\ell}}(z) \\ F_{a_{1}}(z) 				\\ \vdots \\ F_{a_{s}}(z)
\end{pmatrix} \ = \ \begin{pmatrix}
z \\ 0 \\ \vdots \\ 0 \\ 0 \\ \vdots \\ 0
\end{pmatrix}.
\end{equation*}
We make an important observation, that the matrix $L$ is invertible, since the highest degree polynomials occur on diagonal of $L$, giving a non-zero determinant of $L$. 
\begin{theorem}\label{thm:gen1}
With notations as above, the generating function $F(z)$ takes the following form of a rational function:
\begin{equation}
\label{eq:0}
F(z)=\frac{z}{z-q+z\sum_{i=1}^\ell \left(1-\frac{1}{m_i}\right)R_i(z)-z		\sum_{j=1}^sR_{\ell+j}(z)},
\end{equation}
where $R_{i}(z)$ is the $i^{th}$ row sum of $P^{-1}(z)$. 
\end{theorem}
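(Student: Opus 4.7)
The plan is to apply a block-inversion (Schur complement) argument to the linear system already set up right before the theorem. Write the unknown vector as $\mathbf{x}(z) = (F(z), G_{r_1}(z), \ldots, G_{r_\ell}(z), F_{a_1}(z), \ldots, F_{a_s}(z))^T$ and split it as $(F(z), \mathbf{y}(z)^T)^T$ where $\mathbf{y}(z)$ collects all $G_{r_j}$ and $F_{a_i}$. Reading off the block structure of $L(z)$, the first row gives $(z-q)F(z) + \mathbf{v}(z)^T \mathbf{y}(z) = z$ where $\mathbf{v}(z)^T = \bigl(-z(1-\tfrac{1}{m_1}), \ldots, -z(1-\tfrac{1}{m_\ell}), z, \ldots, z\bigr)$, and the remaining $\ell+s$ rows give $\mathbbm{1}_{\ell+s} F(z) + P(z)\mathbf{y}(z) = \mathbf{0}$.

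Since $P(z)$ is invertible (the authors noted this via the highest-degree terms lying on the diagonal), I would solve the second block for $\mathbf{y}(z) = -P^{-1}(z)\,\mathbbm{1}_{\ell+s}\, F(z)$ and substitute into the first block. This yields the scalar equation
\[
\bigl[(z-q) - \mathbf{v}(z)^T P^{-1}(z)\,\mathbbm{1}_{\ell+s}\bigr]\,F(z) = z.
\]
The key observation I would highlight next is that $P^{-1}(z)\,\mathbbm{1}_{\ell+s}$ is precisely the column vector of row sums of $P^{-1}(z)$, namely $(R_1(z), \ldots, R_{\ell+s}(z))^T$. Expanding the inner product against $\mathbf{v}(z)$ gives
\[
\mathbf{v}(z)^T P^{-1}(z)\,\mathbbm{1}_{\ell+s} = -z\sum_{i=1}^{\ell}\!\left(1-\tfrac{1}{m_i}\right)\! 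R_i(z) + z\sum_{j=1}^{s} R_{\ell+j}(z),
\]
and moving this term to the other side produces exactly the denominator in \eqref{eq:0}.

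Once these algebraic steps are in place, solving for $F(z)$ is immediate. The rationality of $F(z)$ then follows because $P(z)$ is a polynomial matrix, so $P^{-1}(z)$ has rational entries (ratios of cofactor determinants to $\det P(z)$), and each $R_i(z)$ is a rational function; $z-q$ is polynomial. I do not anticipate a serious obstacle: everything reduces to verifying that $P(z)$ is invertible (already observed) and to a careful bookkeeping of signs when expanding $\mathbf{v}(z)^T P^{-1}(z)\mathbbm{1}_{\ell+s}$. The only mild subtlety worth flagging is that the minus signs in $\mathbf{v}(z)$ for the repeated-word coordinates and the plus signs for the forbidden-word coordinates must be tracked correctly, since a sign error there would flip the roles of the two sums in the denominator.
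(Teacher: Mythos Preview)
Your proposal is correct and follows essentially the same approach as the paper: the paper's proof consists of the single sentence ``The result follows by computing $L(z)^{-1}$ using the formula for inverse of a $2\times 2$ block matrix,'' which is precisely the Schur-complement elimination you have written out in detail. Your bookkeeping of signs is accurate, and the only minor point to note is that the invertibility observation the paper makes for $L(z)$ applies equally to the sub-block $P(z)$ (highest-degree terms on the diagonal), which is what your argument actually uses.
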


\begin{proof}
The result follows by computing $L(z)^{-1}$ using the formula for inverse of a $2 \times 2$ block matrix given in~\cite{Inverse}.
\end{proof}

We will now obtain another form of $F(z)$ in the following result. We will first define notations required for the result. Consider the diagonal matrix function $D(z)=\text{diag}\{D_{ii}(z)\}_{1\le i\le \ell+s}$, where 
\begin{equation}\label{eqn:diagD}
D_{ii}(z)=\begin{cases}
z\left(1-\frac{1}{m_i}\right), & 1\le i\le\ell,\\ -z, &  \ell+1\le i\le \ell+s.
\end{cases}
\end{equation}
Define another matrix function 
\begin{eqnarray}\label{eq:Q}
Q&=&D^{-1}P^T D.
\end{eqnarray} 

\begin{theorem}\label{thm:gen2} 
With notations as above, $F(z)$ takes the following form of a rational function:
\begin{eqnarray}\label{eq:00}
F(z)&=&\frac{z}{z-q+z\sum_{i=1}^\ell \left(1-\frac{1}{m_i}\right)S_i(z)-z\sum_{j=1}^sS_{\ell+j}(z)},
\end{eqnarray}
where $S_i(z)$ denotes the $i^{th}$ row sum of inverse of $Q^{-1}(z)$.
\end{theorem}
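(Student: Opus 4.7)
The plan is to reduce Theorem~\ref{thm:gen2} to a simple algebraic identity that relates the row sums $R_i(z)$ of $P^{-1}(z)$ to the row sums $S_i(z)$ of $Q^{-1}(z)$. Specifically, I will show that the denominator of $F(z)$ in Theorem~\ref{thm:gen1} and in the proposed formula~\eqref{eq:00} differ only in the way they are expressed; both reduce to the same expression $z-q+\sum_{i=1}^{\ell+s} D_{ii}(z)\, (\text{row sum of a certain inverse})$ once one uses the definitions of $D(z)$ and $Q(z)$.

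First I would rewrite the denominators using the diagonal matrix $D(z)$ from~\eqref{eqn:diagD}. Since $D_{ii}(z)=z(1-1/m_i)$ for $1\le i\le\ell$ and $D_{\ell+j,\ell+j}(z)=-z$ for $1\le j\le s$, the denominator in~\eqref{eq:0} equals
\[
z-q+\sum_{i=1}^{\ell+s} D_{ii}(z)\, R_i(z),
\]
and likewise the denominator claimed in~\eqref{eq:00} equals $z-q+\sum_{i=1}^{\ell+s} D_{ii}(z)\, S_i(z)$. Hence the theorem is equivalent to the identity
\[
\sum_{i=1}^{\ell+s} D_{ii}(z)\, S_i(z)\;=\;\sum_{i=1}^{\ell+s} D_{ii}(z)\, R_i(z).
\]

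Next I would unwind the definition $Q=D^{-1}P^TD$ from~\eqref{eq:Q}. Since $D$ is diagonal, inversion gives $Q^{-1}=D^{-1}(P^{-1})^T D$, so entrywise
\[
(Q^{-1})_{ij}(z)\;=\;\frac{D_{jj}(z)}{D_{ii}(z)}\,(P^{-1})_{ji}(z).
\]
Summing over $j$ produces the relation $D_{ii}(z)\, S_i(z)=\sum_j D_{jj}(z)\,(P^{-1})_{ji}(z)$, i.e.\ the weighted row sum of $Q^{-1}$ equals the corresponding weighted column sum of $P^{-1}$. Summing this identity over $i$ and swapping the order of summation,
\[
\sum_{i=1}^{\ell+s} D_{ii}(z)\, S_i(z)\;=\;\sum_{j=1}^{\ell+s} D_{jj}(z)\sum_{i=1}^{\ell+s}(P^{-1})_{ji}(z)\;=\;\sum_{j=1}^{\ell+s} D_{jj}(z)\, R_j(z),
\]
which is exactly the required identity. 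Substituting back into Theorem~\ref{thm:gen1} yields~\eqref{eq:00}.

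There is no real obstacle here beyond careful bookkeeping: the whole argument is a conjugation trick combined with the observation that turning ``row sums'' of $P^{-1}$ into ``column sums'' (against the weights $D_{ii}$) is invisible after one more summation. The only mild subtlety worth flagging is that $Q^{-1}$ exists precisely because $P$ is invertible (which was already noted after~\eqref{eq:matrix_P}, as the diagonal of $L(z)$ contains the highest-degree terms), so all the manipulations above are valid as identities of rational functions in $z$.
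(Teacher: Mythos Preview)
Your proof is correct and follows exactly the approach sketched in the paper, which merely says the result ``follows by observing the relationship between the matrix functions $P$ and $Q$ and Theorem~\ref{thm:gen1}.'' You have simply made that relationship explicit: the conjugation $Q=D^{-1}P^{T}D$ turns weighted row sums of $P^{-1}$ into weighted row sums of $Q^{-1}$ after one further summation, so the two denominators agree.
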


\begin{proof}
The proof follows by observing the relationship between the matrix functions $P$ and $Q$ and Theorem~\ref{thm:gen1}.
\end{proof}

\begin{remark}
If the collection $\mathcal{R}$ is empty, the matrix $L(z)$ is equal to $K(z)$ as defined in Theorem~\ref{thm:Guibas}. Consequently, Theorem~\ref{thm:Guibas} is obtained as a corrollary to the Theorem~\ref{thm:reduced_gen_fun}.
\end{remark}

%

\subsection{When $\F\cup\R$ is not reduced} \label{subsec:case2_non_reduced}

In this section, we consider the reduced collections $\F$ and $\R$ such that $\F\cup\R$ is not reduced. Since every word in $\R$ is an allowed word, it cannot contain a forbidden word as its subword. However, since $\F\cup\R$ is not reduced, some forbidden word in $\F$ contains a word from $\R$ as its subword. That is, in this section, we are allowing for existence of $a_{i} \in \F$, $r_{j} \in \R$ and $t \ge |r_{j}|$ such that $t \in (a_{i}, r_{j})$.

Let $w$ be in $f(n)$ and suppose it gets counted in $f_{a_{i}}(n+t)$ for some $0 < t \le |a_{i}|$. Let $w'$ be the corresponding word in $f_{a_{i}}(n+t)$. Suppose $r_{j}$ is a subword of $a_{i}$ which appears in $w'$ such that this occurrence of $r_{j}$ is not contained in $w$. Then by considering the redefined multiplicity of the forbidden $w'$ given in Definition~\ref{def:multi_forb}, this occurrence of $r_{j}$ does not contribute to any extra counting of $w$ in $f_{a_{i}}(n+t)$. 

We now make another important observation. If $w$ in $f(n)$ gets counted in $f_{a_{i}}(n+t)$ for some $0 < t \le |a_{i}|$ and $a_{i}$ contains some $r_{j}$ as a subword except at the end, then clearly $w$ also gets counted in $g_{r_{j}}(n+s)$ for some $0 < s < t$. However if $r_{j}$ appears as a subword of $a_{i}$ at the end, then $w$ can not get counted simultaneously in both $g_{r_{j}}(n+t)$ and $f_{a_{i}}(n+t)$, as $g_{r_{j}}$ counts only the allowed words ending with $r_{j}$. Therefore, we introduce the following notation.

\begin{notation}
The total number of distinct positions at which $r_{j}$ appears in $a_{i}$ as a subword, except for at the end is denoted as 
\begin{eqnarray}\label{eq:gamma}
\gamma(a_{i}, r_{j}) &=& \# \left\lbrace \alpha \in (a_{i}, r_{j}) : \alpha > |r_{j}| \right\rbrace.
\end{eqnarray} 
\end{notation} 

\noindent We also define another concept which will be useful in this section. 
\begin{definition}[$\alpha$-tail correlation and $\alpha$-tail correlation polynomial]
Let $u$ and $v$ be two words with symbols from $\Sigma$, $(u,v) =(c_{1},\dots,\,c_{|u|})$ be the correlation of $u$ and $v$ and let $ 1 \le \alpha \le |u|$. We define the \textit{$\alpha$-tail correlation of $u$ and $v$}, denoted by $(u,v)^{\alpha}$, as the binary string of length $\alpha$ consisting of the last $\alpha$ elements of $(u,v)$, that is,
\[(u,v)^{\alpha} \ = \ \left(c_{|u|-\alpha+1},\, c_{|u|-\alpha+2}, \, \dots,\,c_{|u|} \right).\]
The $\alpha$-tail correlation of $u$ and $v$ can also be interpreted as a polynomial in some variable $z$ as defined below, and is termed as the \textit{$\alpha$-tail correlation polynomial of $u$ and $v$}. For a given $\alpha \ge 1$, the $\alpha$-tail correlation polynomial of $u$ and $v$, denoted as $(u,v)^{\alpha}_{z}$, is defined as
\[ (u,v)^{\alpha}_{z} \ = \ \sum\limits_{i=\alpha}^{|u|} c_{i} z^{|u|-i} \ = \sum\limits_{0 \,< \,t \,\in \,(u,v)^{\alpha}} z^{t-1}.\]
\end{definition}

\begin{remark}
For $\alpha = |u|$, $(u,v)^{|u|} = (u,v)$ and $(u,v)^{|u|}_z = (u,v)_z$.
\end{remark}

\begin{exam}

Let $u = 210210$ and $v = 2102$. Then $(u,v) = \left( 1,0,0,1,0,0 \right)$ and $(v,u) = \left(1,0,0,1 \right)$. The $4$-tail correlation of $u$ and $v$ is $(u,v)^{4} = \left(0,1,0,0 \right)$ and the $2$-tail correlation of $v$ and $u$ is $(v,u)^{2} = \left(0,1 \right)$. Moreover the corresponding polynomials are
\[ (u,v)_{z} = z^{5}+z^{2}; \ (v,u)_{z} = z^{3}+1; \ (u,v)^{4}_{z} = z^{2}; \ (v,u)^{2}_{z} = 1.\]
\end{exam}

\begin{theorem}
\label{thm:non_reduced_gen_fun}
Let $\F = \{a_{1},\dots, a_{s}\}$ be a reduced collection of forbidden words and $\R = \{ r_{1}(m_{1}), \dots, r_{\ell}(m_{\ell}) \}$ be a reduced collection of repeated words with $m_{i} > 1$, such that the union $\F\cup\R$ is not reduced. Then the generating functions $F(z), G_{r_{j}}(z)$ and $F_{a_{i}}(z)$ satisfy the following system of linear equations:
\begin{align}
(z - q) F(z)-\sum\limits_{j=1}^{\ell} z \left( 1 -\frac{1}{m_{j}} \right) 					G_{r_{j}}(z)    	\qquad 			\qquad 	\qquad 		\qquad	\qquad	\qquad \nonumber \\
+ \sum\limits_{i = 1}^{s} z \left[ 1+  \sum\limits_{j = 		1}^{\ell} (m_{j}-1) \gamma(a_{i}, r_{j}) \right]F_{a_{i}}(z)   \ & = \ z \label{eq:non_reduced1} \\
F(z) + \sum\limits_{j=1}^{\ell} \ \left[ z \left( 1- \frac{1}{m_{j}} \right) (r_{j},r_{k})_{z} - z^{|r_{j}|}\delta_{jk} \right]  G_{r_{j}}(z) \	 	\qquad 	\qquad 	\qquad  \label{eq:non_reduced2} \\
- \ \sum\limits_{i=1}^{s} \  z (a_{i}, r_{k})_{z}^{|r_{k}|} 					\left[ 1 \, + \, \sum\limits_{j = 1}^{\ell} (m_{j} -1) \, 							\gamma(a_{i}, r_{j})  \right] F_{a_{i}}(z)& = \ 0, \ \  1 \le k \le \ell,  \nonumber \\
F(z)	+  \ \sum\limits_{j = 1}^{\ell} \ z \left( 1 - \frac{1}{m_{j}} \right) (r_{j}, a_{k})^{|r_{j}|-1}_{z}  G_{r_{j}} (z) \
\qquad 	\qquad 	\qquad 	\qquad \label{eq:non_reduced3} \\
-   \sum\limits_{i=1}^{s} \left[ z (a_{i}, a_{k})_{z} + \sum\limits_{0 \, < \, t \, \in \, (a_{i}, a_{k})} z^{t} \ \sum\limits_{j = 1}^{\ell}(m_{j} -1) \gamma_{t}(a_{i}, r_{j}) \,  \right] F_{a_{i}}(z)
& = \ 0,\ \  1  \le k \le s.  \nonumber
\end{align}

\end{theorem}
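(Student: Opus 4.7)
The plan is to establish three recurrence relations among $f(n)$, $g_{r_j}(n)$, and $f_{a_i}(n)$ and then convert them into the stated generating-function identities via the standard transform $\sum_{n \ge 0} (\cdot)\, z^{-n}$. This directly parallels the reduced-case argument in Theorem~\ref{thm:reduced_gen_fun}: recurrences are obtained by concatenating a word $w$ counted in $f(n)$ with a single symbol $x \in \Sigma$, with a repeated word $r_k \in \R$, or with a forbidden word $a_k \in \F$. The new ingredient compared to the reduced case is that a forbidden $a_i$ may now contain a repeated $r_j$ as an interior subword, so Definition~\ref{def:multi_forb} divides the natural multiplicity of any word ending in $a_i$ by $\prod_j m_j^{n(a_i;r_j)}$; the correction terms involving $\gamma$, $\gamma_t$, and the tail-correlation polynomials are precisely what restore the correct combinatorial accounting.

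For equation~\eqref{eq:non_reduced1}, I would evaluate $\sum_{x \in \Sigma} \sum_w m(w)\, m_j^{[wx \text{ ends in } r_j]}$ in two ways. As in the reduced case, since $\R$ is reduced and at most one new $r_j$-suffix can be created by appending $x$, this quantity equals $q f(n) + \sum_j (1 - 1/m_j)\, g_{r_j}(n+1)$ purely by counting of allowed and would-be-$r_j$-ending concatenations. On the other side, splitting by whether $wx$ is allowed or has a unique forbidden suffix $a_i$ gives $f(n+1) + \sum_i \Phi_i\, f_{a_i}(n+1)$, where $\Phi_i$ accounts for the interior-$r_j$ discrepancy between the natural multiplicity and the redefined one; the combinatorial translation gives the coefficient $\Phi_i = 1 + \sum_j (m_j-1)\, \gamma(a_i, r_j)$.

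For equation~\eqref{eq:non_reduced2}, I would instead attach a whole $r_k \in \R$ to $w$ and classify $w r_k$: if allowed, $w$ contributes to $g_{r_k}(n + |r_k|)$; otherwise the first forbidden word $a_i$ in $w r_k$ sits on the join, giving a position $t \in (a_i, r_k)$ and an $f_{a_i}(n+t)$ contribution. Since $\F \cup \R$ is not reduced, some join positions with $t \ge |r_k|$ would correspond to $r_k$ nested entirely inside $a_i$; restricting to the $|r_k|$-tail correlation $(a_i, r_k)^{|r_k|}_z$ excludes exactly those positions. The same inflation $1 + \sum_j (m_j - 1)\, \gamma(a_i, r_j)$ on $f_{a_i}$ arises as in the first equation, by the same interior-discount argument, and the correction from repeated words straddling the join is identical to the reduced case since $\R$ remains reduced. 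Equation~\eqref{eq:non_reduced3} is handled analogously with $a_k$ in place of $r_k$: the $(|r_j|-1)$-tail correlation $(r_j, a_k)^{|r_j|-1}_z$ excludes the length-$|r_j|$ overlap that would place $r_j$ as a suffix of the forbidden $a_k$ (impossible for a word counted in $g_{r_j}$), and the additive correction $\sum_{0 < t \in (a_i, a_k)} z^t \sum_j (m_j - 1)\, \gamma_t(a_i, r_j)$ captures the position-dependent overlap of interior $r_j$-copies of the straddling $a_i$ with the attached $a_k$-portion.

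The main obstacle is the simultaneous bookkeeping of (i) the redefined multiplicity attached to a trailing forbidden word, and (ii) the interior $r_j$-copies of any $a_i$ that arises either on the join or as the trailing block; verifying that the tail-correlation truncations and the $\gamma$/$\gamma_t$ corrections exactly compensate without double counting requires a careful case analysis by overlap position. Once the recurrences are established at each $n$, the passage to generating functions (multiply by $z^{-n}$, sum over $n \ge 0$, use $f(0) = 1$, $g_{r_j}(0) = f_{a_i}(0) = 0$) and the grouping of terms into the prescribed correlation polynomials is routine.
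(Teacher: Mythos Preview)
Your proposal is correct and follows essentially the same route as the paper's proof: derive three recurrences by appending a symbol, an $r_k$, or an $a_k$ to a word counted in $f(n)$, track the overcounting from repeated words on the join and the undercounting from Definition~\ref{def:multi_forb}, and pass to generating functions. The one point you gloss over---why \eqref{eq:non_reduced2} carries the position-independent $\gamma(a_i,r_j)$ while \eqref{eq:non_reduced3} requires the position-dependent $\gamma_t(a_i,r_j)$---is handled in the paper by observing that when $r_k$ is appended, any interior copy of $r_j\subset a_i$ at position $\alpha\le t$ would force $r_j$ to be a subword of $r_k$, contradicting that $\R$ is reduced, so $\alpha>t$ comes for free; when $a_k$ is appended this implication fails (since $a_k$ may contain $r_j$), and the constraint $\alpha>t$ must be imposed by hand, yielding $\gamma_t$.
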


The proof of this result is a counting argument as in the proof of Theorem~\ref{thm:reduced_gen_fun}. The counting is trickier here due to the appearance of repeated word as a subword of a forbidden word since the collection $\F\cup \R$ is not reduced. Due to heavy technical calculations, the proof is given in the Appendix~\ref{sec:app}.

\begin{remark}
When the collection $\F\cup\R$ is reduced, then $\gamma(a_{i}, r_{j}) = 0$ for all $1 \le j \le \ell, \ 1 \le i \le s$. Moreover, the tail-correlation polynomials $(a_{i}, r_{k})_z^{|r_{k}|}$ and $(r_{j}, a_{k})_z^{|r_{j}|-1}$ coincide with the correlation polynomials $(a_{i}, r_{k})_z$ and $(r_{j}, a_{k})_z$. Thus Theorem~\ref{thm:reduced_gen_fun} is a corollary to Theorem~\ref{thm:non_reduced_gen_fun}.  
\end{remark}

\section{Perron Root and Eigenvectors of $A$}\label{sec:Perron EVectors}
In this section, we obtain a description of the Perron root of the adjacency matrix $A$ associated with given $\F$ and $\R$ using the results from the previous section and also derive expressions for the left and right Perron eigenvectors of $A$ in terms of correlation polynomials. When $\Sigma_A$ is irreducible, we apply Theorems~\ref{thm:same_roots},~\ref{thm:gen1} and~\ref{thm:gen2} together to obtain the Perron root of $A$ as the largest real pole of $F(z)$. The proof follows along the same lines as the proof of~\cite[Theorem 4.1]{Parry}. Hence we immediately arrive at the following result. 



\begin{theorem}[The Perron root]\label{thm:Perron_root}
With the notations as above, the Perron root of $A$ is given by the largest real zero of $z-q+z\sum_{i=1}^\ell \left(1-\frac{1}{m_i}\right)R_i(z)-z		\sum_{j=1}^sR_{\ell+j}(z)$, which equals $z-q+z\sum_{i=1}^\ell \left(1-\frac{1}{m_i}\right)S_i(z)-z\sum_{j=1}^sS_{\ell+j}(z)$ (see~\eqref{eq:0} and~\eqref{eq:00} for notations).
\end{theorem}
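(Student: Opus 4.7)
The plan is to deduce the characterization of the Perron root $\theta$ of $A$ from the rational-function formulas for $F(z)$ established in Theorems~\ref{thm:gen1} and \ref{thm:gen2}, together with the asymptotic identification $\lim_{n\to\infty}\frac{1}{n}\ln|\Lambda_n|=\ln\theta$ provided by Theorem~\ref{thm:same_roots}. Since the two formulas for $F(z)$ differ only in the presentation of their denominators (via $R_i$ versus $S_i$), once one equality is proved the other follows from the equality of the two rational expressions for $F(z)$, so it suffices to focus on the first form.

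First I would record the consequences of Theorem~\ref{thm:same_roots}: because $f(n)=|\Lambda_n|$ grows like $\theta^n$, the Laurent series $F(z)=\sum_{n\ge 0}f(n)z^{-n}$ converges absolutely for $|z|>\theta$ and diverges for $|z|<\theta$. In particular $\theta$ equals the maximal modulus of any singularity of $F$. By Theorem~\ref{thm:gen1}, $F(z)$ is a rational function, so every singularity is a pole. Hence the largest-modulus pole of $F$ is $\theta$, and one needs to show that such a pole actually occurs at the real positive value $z=\theta$, and that it arises precisely as a zero of the displayed denominator
\[
\Delta(z):=z-q+z\sum_{i=1}^{\ell}\Bigl(1-\tfrac{1}{m_i}\Bigr)R_i(z)-z\sum_{j=1}^{s}R_{\ell+j}(z).
\]

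Next I would invoke a Pringsheim-type argument: the coefficients $f(n)$ are all non-negative, so the power series in $w=1/z$ has a singularity on the positive real $w$-axis at its radius of convergence. Translated back, $F$ has a singularity at $z=\theta>0$, and being rational, this singularity is a pole. The formula $F(z)=z/\Delta(z)$ from Theorem~\ref{thm:gen1} then forces $\Delta(\theta)=0$ (after checking that no cancellation with the numerator $z$ occurs, which follows since $\theta>0$ and, by Theorem~\ref{thm:same_roots}(iii) together with irreducibility, $\theta>1$; in the degenerate case $\R=\emptyset$ one appeals to the classical binary result). Any real zero $z_0>\theta$ of $\Delta(z)$ would force $F(z)$ to blow up at $z_0$, contradicting convergence for $|z|>\theta$; hence $\theta$ is the largest real zero of $\Delta$.

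The main obstacle I foresee is not the Perron--Frobenius/Pringsheim step, which is standard, but rather the bookkeeping needed to guarantee that the denominator $\Delta(z)$, a priori only a rational function because the $R_i(z)$ are row sums of $P^{-1}(z)$, is not artificially inflated by spurious poles of $P^{-1}$ that could mask the true pole of $F$. To handle this one clears denominators and works with the characteristic-style polynomial $\det L(z)$ obtained from Cramer's rule applied to the system of Theorem~\ref{thm:reduced_gen_fun} (and its counterpart in Section~\ref{subsec:case2_non_reduced}), and verifies that after cancellation the largest real root of the cleared denominator coincides with the largest real zero of $\Delta$. The equivalence with the $S_i$-form is then immediate from the identity $Q=D^{-1}P^TD$ in~\eqref{eq:Q}, since conjugation by the diagonal $D$ preserves the quantities that enter into the row-sum combinations in $\Delta$, completing the argument.
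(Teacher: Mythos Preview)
Your proposal is correct and follows essentially the same route as the paper: the paper's proof simply invokes Theorems~\ref{thm:same_roots}, \ref{thm:gen1}, and \ref{thm:gen2} and states that the argument proceeds along the lines of~\cite[Theorem~4.1]{Parry}, which is precisely the Pringsheim/radius-of-convergence identification you spell out. Your additional care about possible cancellations and spurious poles of $P^{-1}$ goes slightly beyond what the paper makes explicit, but this is exactly the kind of detail that the cited reference handles, so your write-up is a faithful expansion of the paper's one-line proof.
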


For a given non-negative irreducible integer matrix $A$ of size $q$ labelled by symbols $0,1,\dots,q-1$ (in order), define $\F=\{xy\ : \  A_{xy}=0\}=\{a_1,\dots,a_s\}$ and $\R=\{xy:A_{xy}>1\}=\{r_1,\dots,r_\ell\}$ with multiplicity $m_i$ of $r_i=xy$ given by $A_{xy}$. Then trivially, $\F\cup\R$ is reduced with the adjacency matrix $A$. Hence we can obtain an expression for the Perron root of $A$ using Theorem~\ref{thm:Perron_root}.

\begin{remark}
The Perron root of the adjacency matrix $A$ associated with $\F$ and $\R$ is given by the largest real pole of the generating function $F(z)$ even when $\F\cup\R$ is not reduced. Hence, one can solve the system of equations~\eqref{eq:non_reduced1}-~\eqref{eq:non_reduced3} to calculate the Perron root of $A$. For instance, consider the example when $q = 2$, $\F = \{ 001 \}$ and $\R= \{00(2)\}$, let $a=001$ and $r=00$. Then the system~\eqref{eq:non_reduced1}-~\eqref{eq:non_reduced3} is given by
\begin{equation*}
\begin{pmatrix}
z-2 & \frac{-z}{2} & 2z \\ 1 & \frac{z-z^{2}}{2} & 0 \\ 1 & \frac{z}{2} & -z^{3} \end{pmatrix}		
\begin{pmatrix}
F(z) \\ G_{r}(z)\\ F_{a}(z) 
\end{pmatrix} \ =  \ \begin{pmatrix}
z \\ 0 \\ 0
\end{pmatrix},
\end{equation*}	 
which gives $F(z)=\frac{z}{z-2}$. The largest pole of $F(z)$ is $2$, which is the Perron root of the associated adjacency matrix $A=\begin{pmatrix}
2 & 0 & 0 & 0 \\ 0 & 0 & 1 & 1 \\ 1 & 1 & 0 & 0 \\ 0 & 0 & 1 & 1 						\end{pmatrix}$, indexed by $\{00,01,10,11\}$ in order.
\end{remark}

Now, we will obtain an expression for the Perron eigenvectors of $A$. Let $p$ be the length of the longest word in $\F\cup\R$. Let us assume, from now on in this section, that all the words in $\R$ are of length $p$. Consequently $\F\cup\R$ is reduced.  In this case, $A_{XY} = m(X*Y)$ if $X*Y \in \mathcal{L}_{p}$, and is 0 otherwise. For a word $X$ of length $p-1$ with symbols from $\Sigma$, define
\begin{eqnarray}
U_X&=&1-\theta\sum_{i=1}^\ell \left(1-\frac{1}{m_i}\right)R_i(\theta)(\tilde{r}_i,X)_\theta+\theta\sum_{j=1}^sR_{\ell+j}(\theta)(\tilde{a}_j,X)_\theta, \label{eq:U}\\
V_X&=&1-\theta\sum_{i=1}^\ell \left(1-\frac{1}{m_i}\right)S_i(\theta)(X,r_i)_\theta+\theta\sum_{j=1}^sS_{\ell+j}(\theta)(X,a_j)_\theta, \label{eq:V}
\end{eqnarray}
where $\tilde{r}_i,\tilde{a}_j$ are the words obtained by removing the first symbol of $r_i,a_j$ respectively (note that $(\tilde{r}_i,X)_z=(r_i,X)^{|r_i|-1}_z$, $(\tilde{a}_j,X)_z=(a_j,X)^{|a_j|-1}_z$ are the $|r_i|-1$ and $|a_j|-1$ tail correlations as described in Section~\ref{subsec:case2_non_reduced} respectively) and $(u,w)_\theta$ is the correlation polynomial $(u,w)_z$ evaluated at $z=\theta$. Then we have the following result. As argued in~\cite[Lemma 5.3]{Parry} it is easy to show that $R_i(\theta)$ and $S_i(\theta)$ exist.

\begin{theorem}[Perron eigenvectors]\label{thm:perronvector}
Let $\theta$ be the Perron eigenvalue of the adjacency matrix $A$ associated with $\F$ and $\R$, $U=(U_X)_{X \in \mathcal{L}_{p-1}}$ and $V=(V_X)_{X \in \mathcal{L}_{p-1}}$ be as defined in~\eqref{eq:U} and~\eqref{eq:V}. Then $U$ and $V$ are respectively left and right Perron eigenvectors of $A$.
\end{theorem}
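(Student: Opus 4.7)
The plan is to verify the two eigenvector identities $AV = \theta V$ and $U^T A = \theta U^T$ by direct substitution, following the strategy of~\cite[Theorem 5.1]{Parry} and adapting it to the presence of repeated words. Since we are assuming every word in $\R$ has length $p$, we have $A_{XY} = m(X*Y)$ whenever $X*Y \in \mathcal{L}_p$, and the only source of multiplicities greater than one in the sum $\sum_{Y} A_{XY} V_Y$ is the possibility that $X*Y$ equals some $r_i \in \R$.

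For the right eigenvector, I would begin by substituting the formula \eqref{eq:V} for $V_Y$ into $(AV)_X = \sum_Y A_{XY} V_Y$ and separating the result into three pieces: a constant piece equal to the multiplicity-weighted out-degree of $X$, a piece involving the sums $T(X,r_i) := \sum_{Y} A_{XY}(Y, r_i)_\theta$, and an analogous piece involving $T(X,a_j)$. The key combinatorial step is to parameterize $Y$ by the last symbol $y$ of $X*Y$ and unpack the correlation polynomial $(Y,w)_\theta$ bit by bit: an overlap of length $t$ between $Y$ and a word $w$ corresponds precisely either to $y = w_1$ (when $t=1$) or to $y = w_t$ together with the condition that the last $t-1$ symbols of $X$ match the first $t-1$ symbols of $w$, i.e.\ $t-1 \in (X,w)$. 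This reduces $T(X,w)$ to a closed expression in terms of $(X,w)_\theta$, together with a ``boundary term'' coming from the $t = p-1$ case and a correction whenever $Xw_t$ coincides with some repeated word $r_i \in \R$.

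After substituting these reduced expressions back into $\sum_Y A_{XY} V_Y$, the correction terms reassemble into combinations of $(X,r_i)_\theta$ and $(X,a_j)_\theta$, while the leading $\theta(X,w)_\theta$ piece produces a copy of the defining expression for $V_X$ scaled by $\theta$. The remaining constant contribution can then be collapsed using the fact that $\theta$ is the largest real zero of $z - q + z\sum_i(1-1/m_i)S_i(z) - z\sum_j S_{\ell+j}(z)$ from Theorem~\ref{thm:gen2}, exactly so that the constants in $\theta V_X - (AV)_X$ cancel. The argument for $U^T A = \theta U^T$ is formally symmetric: one shifts by the first symbol of $X*Y$ rather than the last, so that $\tilde{r}_i$ and $\tilde{a}_j$ appear naturally, and one uses Theorem~\ref{thm:gen1} with the row sums $R_i(\theta)$ of $P^{-1}(\theta)$ in place of $S_i(\theta)$.

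The main obstacle will be the bookkeeping in the combinatorial reduction of $T(X,w)$: one must separately handle the cases when $w$ overlaps with $X$ by more than the allowed number of positions (relevant when $w \in \F$), when $X w_t \in \R$ so that $m(X w_t) = m_i > 1$, and when $X$ is itself a prefix of $w$ (the $t = p-1$ boundary term). Matching the resulting correction terms against the coefficients $\theta(1 - 1/m_i) S_i(\theta)$ and $\theta S_{\ell+j}(\theta)$ appearing in $V_X$ requires invoking the defining linear system of Theorem~\ref{thm:reduced_gen_fun} for the entries of $P^{-1}(\theta)$, rather than just its largest-root characterization. Once these identities are in place, the verification of the eigenvector equations reduces to algebra.
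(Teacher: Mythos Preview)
Your proposal is correct and follows essentially the same route as the paper: direct verification of $AV=\theta V$ by parameterizing the successor $Y$ via the last symbol $b$ of $X*Y$, reducing the sums $\sum_Y A_{XY}(Y,w)_\theta$ to $\theta(X,w)_\theta$ plus boundary and multiplicity corrections (the paper records this as the pair of identities~\eqref{eq:2}--\eqref{eq:3}), and then closing the computation using both the root equation for $\theta$ and the defining relation $\sum_j Q_{ij}S_j=1$ for $Q^{-1}$. The only cosmetic difference is in the left eigenvector: rather than repeating the calculation with a shift by the first symbol as you suggest, the paper passes to the reversed collections $\hat{\F},\hat{\R}$, observes that their adjacency matrix is $A^T$, and invokes the already-proved right-eigenvector result---your direct approach would work just as well.
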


\begin{proof}
We first prove that $V$ is a right eigenvector of $A$ with respect to the Perron root $\theta$. For a fixed allowed word $X=x_1x_2\dots x_{p-1}$ of length $p-1$, we need to show that 
\[
\sum_{b=0}^{q-1}m_{Xb}V_{\tilde{X}b}=\theta V_X,
\] where $\tilde{X}b=x_2x_3\dots x_{p-1}b$ and $m_{Xb}$ is the multiplicity of $Xb=x_1x_2\dots x_{p-1}b$. By convention, if $Xb$ is forbidden, then $m_{Xb}=0$. 
That is, we need to show that,
\begin{align}\label{eq:1}
0=&\sum_{b=0}^{q-1}m_{Xb}\left(1-\theta\sum_{i=1}^\ell \left(1-\frac{1}{m_i}\right)S_i(\theta)(\tilde{X}b,r_i)_\theta+\theta\sum_{j=1}^sS_{\ell+j}(\theta)(\tilde{X}b,a_j)_\theta\right)\nonumber	\\
&-\theta\left(1-\theta\sum_{i=1}^\ell \left(1-\frac{1}{m_i}\right)S_i(\theta)(X,r_i)_\theta+\theta\sum_{j=1}^sS_{\ell+j}(\theta)(X,a_j)_\theta\right)\nonumber\\
=&\sum_{b=0}^{q-1}m_{Xb}-\theta-\theta\left(\sum_{i=1}^\ell \left(1-\frac{1}{m_i}\right)S_i(\theta)\left(\sum_{b=0}^{q-1}m_{Xb}(\tilde{X}b,r_i)_\theta-\theta(X,r_i)_\theta\right)\right)\nonumber\\
&+\theta\left(\sum_{j=1}^s S_{\ell+j}(\theta)\left(\sum_{b=0}^{q-1}m_{Xb}(\tilde{X}b,a_j)_\theta-\theta(X,a_j)_\theta\right)\right).
\end{align}
First of all, we make a couple of observations. Fix $w\in\F\cup\R$. We consider the following two cases:\\
\noindent i) Let $X$ be such that $Xb$ does not end with $w$ for any $b\in\Sigma$. Then for any $z$, 
\begin{equation}\label{eq:2}
\sum_{b=0}^{q-1}(\tilde{X}b,w)_z=z(X,w)_z+1.
\end{equation}

\noindent ii) Let $X$ be such that $Xb_0$ ends with $w$ for some $b_0\in\Sigma$. Such $b_0$ is unique and for any $z$, 
\begin{equation}\label{eq:3}
\sum_{b=0}^{q-1}(\tilde{X}b,w)_z+z^{|w|-1}=z(X,w)_z+1.
\end{equation}

\noindent Let $X$ be such that $Xb_1 =r_1, \dots, Xb_d = r_d$ and $Xb_{d+1},\dots,Xb_{d+n}$ end with $a_1,\dots,a_n$, respectively, for some $0\le d\le \ell,0\le n\le s$ so that $m_1=m_{Xb_1},\dots,m_d=m_{Xb_d}>1, m_{Xb_{d+1}}=\dots=m_{Xb_{d+n}}=0$ and $m_{Xb_{d+n+1}}=\dots=m_{Xb_q}=1$ (rename words from $\F$ or $\R$ if needed). We look at terms in~\eqref{eq:1} separately.  

\noindent For $i=1,\dots,d$ and $k=1,\dots,d$, since $Xb_k=r_k$, we have for $k\ne i, (\tilde{X}{b_k},r_i)_\theta=(r_k,r_i)_\theta$, and for $k=i, (\tilde{X}{b_i},r_i)_\theta=(r_i,r_i)_\theta-\theta^{p-1}$. Also for $k=1,\dots,n$, since $Xb_{d+k}$ ends with $a_k$ and $\F\cup\R$ is reduced, we have $(\tilde{X}{b_{d+k}},r_i)_\theta=(a_k,r_i)_\theta$. Hence using~\eqref{eq:3},

\begin{equation}
\begin{split}
\sum_{b=0}^{q-1}m_{Xb}(\tilde{X}b,r_i)_\theta-\theta(X,r_i)_\theta
=& \sum_{k=1}^d m_k(\tilde{X}{b_k},r_i)_\theta+\sum_{k=d+n+1}^{q}(\tilde{X}{b_k},r_i)_\theta-\theta(X,r_i)_\theta\\
=& \sum_{b=0}^{q-1}(\tilde{X}b,r_i)_\theta-\theta(X,r_i)_\theta+\sum_{k=1}^d(m_k-1)(\tilde{X}{b_k},r_i)_\theta \\
& \qquad  -\sum_{k=1}^n(\tilde{X}{b_{d+k}},r_i)_\theta \\
=&1-m_i\theta^{p-1}+\sum_{k=1}^d(m_k-1)(r_k,r_i)_\theta-\sum_{k=1}^n(a_k,r_i)_\theta.
\end{split}\label{eq:4}
\end{equation}

\noindent Similarly for $i=d+1,\dots,\ell$, using~\eqref{eq:2}, we get
\begin{equation}
\begin{split}
\sum_{b=0}^{q-1}m_{Xb}(\tilde{X}b,r_i)_\theta-\theta(X,r_i)_\theta
=&1+\sum_{k=1}^d(m_k-1)(r_k,r_i)_\theta-\sum_{k=1}^n(a_k,r_i)_\theta.
\end{split}\label{eq:5}
\end{equation}

\noindent Combining~\eqref{eq:4} and~\eqref{eq:5}, we obtain
\begin{equation}
\begin{split}
\sum_{i=1}^\ell&\left(1-\frac{1}{m_i}\right) S_i(\theta)\left(\sum_{b=0}^{q-1}m_{Xb}(\tilde{X}b,r_i)_\theta-\theta(X,r_i)_\theta\right)\\=& \sum_{i=1}^\ell\left(1-\frac{1}{m_i}\right) S_i(\theta)\left(1+\sum_{k=1}^d(m_k-1)(r_k,r_i)_\theta-\sum_{k=1}^n(a_k,r_i)_\theta\right) -\sum_{i=1}^d(m_i-1)\theta^{p-1}S_i(\theta).
\end{split}\label{eq:6}
\end{equation}

\noindent We use similar steps to obtain,
\begin{align}\label{eq:7}
\sum_{j=1}^sS_{\ell+j} &(\theta)\left(\sum_{b=0}^{q-1}m_{Xb}(\tilde{X}b,a_j)_\theta-\theta(X,a_j)_\theta\right) \nonumber\\
&= \sum_{j=1}^sS_{\ell+j}(\theta)\left(1-\sum_{k=1}^d(m_k-1)(r_k,a_j)_\theta-\sum_{k=1}^n(a_k,a_j)_\theta\right).
\end{align}

\noindent Also note that $\sum_{b=0}^{q-1}m_{Xb}=q+\sum_{k=1}^d(m_k-1)-n$. 
Combining~\eqref{eq:6} and~\eqref{eq:7} in Equation~\eqref{eq:1}, we obtain the expression

\begin{equation*}
\begin{split}
q-\theta+&\sum_{k=1}^d(m_k-1)-n - \theta\left( \sum_{i=1}^\ell\left(1-\frac{1}{m_i}\right) S_i(\theta)\left(1+\sum_{k=1}^d(m_k-1)(r_k,r_i)_\theta-\sum_{k=1}^n(a_k,r_i)_\theta\right)\right)\\
+&\theta^p\sum_{i=1}^d(m_i-1)S_i(\theta)
+\theta
\sum_{j=1}^sS_{\ell+j}(\theta)\left(1-\sum_{k=1}^d(m_k-1)(r_k,a_j)_\theta-\sum_{k=1}^n(a_k,a_j)_\theta\right).
\end{split}
\end{equation*}

\noindent We use that $q-\theta-\theta\sum_{i=1}^\ell\left(1-\frac{1}{m_i}\right)S_i(\theta)+\theta\sum_{j=1}^sS_{\ell+j}(\theta)=0$. Now consider the following terms separately:

\begin{equation}\label{eq:8}
\sum_{k=1}^d(m_k-1)\left(1-\theta\left(\sum_{i=1}^\ell\left(1-\frac{1}{m_i}\right)(r_k,r_i)_\theta-\theta^{p-1}\right)S_i(\theta)+\theta\sum_{j=1}^s(r_k,a_j)_\theta S_{\ell+j}(\theta)\right),
\end{equation}
and

\begin{equation}\label{eq:9}
\sum_{k=1}^n\left(\theta\sum_{i=1}^\ell\left(1-\frac{1}{m_i}\right)(a_k,r_i)_\theta S_i(\theta)-\theta\sum_{j=1}^s(a_k,a_j)_\theta S_{\ell+j}(\theta)\right)-n.
\end{equation}

\noindent If $Q_{i,j}(z)$ denotes the $i,j$-th entry of $Q(z)$, then $\sum_{j=1}^{\ell+s}Q_{i,j}(z)S_j(z)=1$, for all $i=1,\dots,\ell+s$ and for all $z$. Using this, we get that both the terms~\eqref{eq:8} and~\eqref{eq:9} are zero.\\

To prove that $U$ is a left eigenvector of $A$ with respect to $\theta$, we define another adjacency matrix $B$ corresponding to the collections $\hat{\F}=\{\hat{a}_1,\dots,\hat{a}_s\}$ and $\hat{\R}=\{\hat{r}_1(m_1),\dots,\hat{r}_\ell(m_{\ell})\}$ where $\hat{w}$ is the reverse of $w$. Unlike $A$, here the rows and columns are indexed by the reverse of the allowed words of length $p-1$. For $X=x_1\dots x_{p-1}$ and $Y=y_1\dots y_{p-1}$, the $\hat{X}\hat{Y}$-th entry of $B$ is given by multiplicity of $\hat{X}*\hat{Y}$ (with respect to $\hat{\F}$ and $\hat{\R}$) where $\hat{X}*\hat{Y}$ is defined if $x_{p-2}\dots x_1=y_{p-1}\dots y_2$ and $\hat{X}*\hat{Y}=x_{p-1}y_{p-1}\dots y_1$. Note that this is same as the $YX$-th entry of $A$. Hence $B$ is the transpose of $A$. Since $\F\cup\R$ is reduced, for any $u,w\in\F\cup\R$, note that $(\hat{u},\hat{w})_z=(w,u)_z$ for all $z$. Hence by the first part of this proof, $B$ has a right Perron eigenvector ($A$ has a left Perron eigenvector) with the $\hat{X}$-th entry ($X$-th entry) given by
\[
1-\theta\sum_{i=1}^\ell \left(1-\frac{1}{m_i}\right)R_i(\theta)(\hat{X},\hat{r}_i)_\theta+\theta\sum_{j=1}^sR_{\ell+j}(\theta)(\hat{X},\hat{a}_j)_\theta.
\]    
To obtain the final expression, observe that $(\hat{X},\hat{r}_i)_z=(\tilde{r}_i,X)_z$ and $(\hat{X},\hat{a}_j)_z=(\tilde{a}_j,X)_z$.
\end{proof}

\begin{exam}\label{ex:evector}
Let $\Sigma=\{0,1\},\ \F=\{010\},\ \R=\{100(3)\}$. Then the adjacency matrix is indexed by $\mathcal{L}_2=\{00,01,10,11\}$ and is given by \[
A= \begin{pmatrix}
1& 1& 0& 0\\
0& 0& 0& 1\\
3& 1& 0& 0\\
0& 0& 1& 1
\end{pmatrix}.\] Here $P(z)=\begin{pmatrix}
-z^3/3& -z^2\\
2 z/3& -z (z^2 + 1)
\end{pmatrix}$, $Q(z)=\begin{pmatrix}
-z^3/3& -z\\
2 z^2/3& -z (z^2 + 1)\\
\end{pmatrix},$ $R_1(z)= \dfrac{-3(1 - z + z^2)}{z^2 (2 + z + z^3)}$, $R_2(z)= \dfrac{-(2 + z^2)}{z^2 (2 + z + z^3)}, S_1(z)= \dfrac{-3}{2 + z + z^3}$, and $S_2(z)= \dfrac{-(2 + z)}{2 z + z^2 + z^4}$. This gives the Perron root of $A$ to be the largest zero of $z - 2 +\dfrac{2}{3} z R_1(z) - zR_2(z)=z - 2 +\dfrac{2}{3} z S_1(z) - zS_2(z)$, which is $\theta=2$. Here
\[
U=\begin{pmatrix}
3/2\\1\\1/2\\1
\end{pmatrix}, \ \text{and }
V=\begin{pmatrix}
2/3\\2/3\\4/3\\4/3
\end{pmatrix}
\]
%

\end{exam}	

%

\begin{remark}
Consider the case when $\R$ has a word of length strictly less than $p$. Here we define a new collection of repeated words $\tilde{\R}$ as given in Remark~\ref{rem:new_R} that consists of all words of length $p$ that start with words from $\R$. In this case Theorem~\ref{thm:perronvector} gives Perron eigenvector of $A$ in terms of words from $\F$ and $\tilde{\R}$. For example, if $\Sigma=\{0,1\},\ \F=\{0000\}$ and $\R=\{01(2)\}$. Then we look at the following collection $\tilde{\R}=\{0100(2),0101(2),0110(2),0111(2)\}$ and calculate the eigenvectors of $A$ indexed by $\mathcal{L}_3=\{000,001,010,011,100,101,110,111\}$ where $A_{XY}=k(X*Y)$ for $X,Y\in\mathcal{L}_3$.
\end{remark}

\subsubsection*{Perron eigenvectors for a non-negative matrix}
Let $A=[A_{xy}]_{0\le x,y\le q-1}$ be a non-negative irreducible integer matrix of size $q$. Let $\F=\{xy :   A_{xy}=0\}=\{a_1,\dots,a_s\}$ and $\R=\{xy:A_{xy}>1\}=\{r_1,\dots,r_\ell\}$ with multiplicity $m_i$ of $r_i=xy$ given by $A_{xy}$. If $P(z),Q(z)$ denote the matrix functions as given in~\eqref{eq:matrix_P} and~\eqref{eq:Q} for these collections $\F$ and $\R$. Let $R_i(z),S_i(z)$ denote the $i^{th}$ row sum of $P^{-1}(z),Q^{-1}(z)$, respectively. Since all the forbidden words and repeated words are of length 2 here, the expressions for Perron eigenvectors take a simplified form as given in the following result.

\begin{corollary}[Perron eigenvectors for a non-negative matrix]
Let $U=(U_x)_{0\le x\le q-1}$
and $V=(V_x)_{0\le x\le q-1}$ be the vectors defined as 	
\[
U_x=1-\theta\left(\sum\limits_{\substack{i=1\\r_i\text{ ends with }x}}^\ell \left(1-\frac{1}{m_i}\right)R_i(\theta)+\sum\limits_{\substack{j=1\\a_j\text{ ends with }x}}^sR_{\ell+j}(\theta)\right),
\]
\[
V_x=1-\theta\left(\sum\limits_{\substack{i=1\\r_i\text{ begins with }x}}^\ell \left(1-\frac{1}{m_i}\right)S_i(\theta)+\sum\limits_{\substack{j=1\\a_j\text{ begins with }x}}^sS_{\ell+j}(\theta)\right),
\] where $\theta$ is the Perron root of $A$. Then $U$ and $V$ are left and right Perron eigenvectors, respectively, of $A$.
\end{corollary}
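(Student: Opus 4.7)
The proof is a direct specialization of Theorem~\ref{thm:perronvector} to the case where every forbidden or repeated word has length $p=2$. The plan is first to verify the hypotheses of Theorem~\ref{thm:perronvector} are satisfied, and then to collapse the correlation polynomials appearing in~\eqref{eq:U} and~\eqref{eq:V} to simple indicator functions.

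To set up, I would take $\F=\{xy:A_{xy}=0\}$ and $\R=\{xy:A_{xy}>1\}$ with multiplicity $m(xy)=A_{xy}$. Since all forbidden and repeated words are distinct length-$2$ words, $\F\cup\R$ is automatically reduced, and every $r_i\in\R$ has length $p=2$, so the hypothesis of Theorem~\ref{thm:perronvector} that all repeated words have length $p$ holds. One checks that the adjacency matrix associated with $\F$ and $\R$ (constructed via~\eqref{A_for_F_R}) coincides with the given matrix $A$, so the Perron data produced by Theorem~\ref{thm:perronvector} are indeed the Perron data of $A$. Because $p=2$, the index set $\mathcal{L}_{p-1}$ collapses to $\mathcal{L}_1=\Sigma$, and the vectors $U, V$ are naturally indexed by single symbols.

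The substantive step is to evaluate the correlation polynomials in~\eqref{eq:U} and~\eqref{eq:V} when one of the arguments has length one. For any $r_i = y_1 y_2\in\R$, the tilde word $\tilde r_i = y_2$ is a single symbol. The correlation string $(\tilde r_i, x)$ then has length one, and its sole bit is $1$ exactly when $y_2 = x$, so
\[
(\tilde r_i, x)_\theta \;=\; \dsone[r_i \text{ ends with } x],
\]
and by the same argument applied to $\tilde a_j$,
\[
(\tilde a_j, x)_\theta \;=\; \dsone[a_j \text{ ends with } x].
\]
For the right eigenvector the roles are reversed: for a length-two word $w$, the correlation $(x, w)$ has length one and its only bit is $1$ precisely when $w$ begins with $x$, giving $(x, w)_\theta = \dsone[w \text{ begins with } x]$.

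Substituting these indicator values into~\eqref{eq:U} and~\eqref{eq:V} restricts each sum to exactly those indices $i, j$ for which the corresponding word has the specified symbol at the required end, and collecting terms produces the closed-form expressions for $U_x$ and $V_x$ stated in the corollary. There is no genuine obstacle in the argument; the only point requiring care is the bookkeeping distinguishing terminal symbols (which appear in the formula for $U$, via the tilde words obtained by stripping the initial letter) from initial symbols (which appear in the formula for $V$), after which the result follows immediately from Theorem~\ref{thm:perronvector}.
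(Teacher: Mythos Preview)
Your proposal is correct and is precisely the intended derivation: the paper presents this result as an immediate specialization of Theorem~\ref{thm:perronvector} to the case $p=2$, and your reduction of the correlation polynomials $(\tilde r_i,x)_\theta$, $(\tilde a_j,x)_\theta$, $(x,r_i)_\theta$, $(x,a_j)_\theta$ to indicator functions on the terminal/initial symbol is exactly the simplification the paper alludes to but does not spell out.
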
	

\begin{remark}
Let $A$ be the adjacency matrix associated with the collections $\F$ and $\R$. Then $A$ is indexed by $\mathcal{L}_{p-1}$. Let $\G_A=(\mathcal{L}_{p-1},\E_A)$ denote the graph where $\E_A=\{(X*Y)_i:X,Y\in\mathcal{L}_{p-1},1\le i\le A_{XY}\}$ is the edge set of $\G_A$. Let $\Sigma_A$ denote the edge shift associated to the matrix $A$. Let us now consider a binary matrix $E$, indexed by $\E_A$. For $(X*Y)_i,(W*Z)_j\in\E_A$, $E_{(X*Y)_i,(W*Z)_j}=1$ if and only if $Y=W$. If $\Sigma^v_E$ denotes the vertex shift associated with $E$, then $\Sigma^v_E=\Sigma_A$. Note that $A$ and $E$ have the same Perron root, say $\theta$. If $U,V$ denote the left and right Perron (column) eigenvectors of $A$ and $\hat{U},\hat{V}$ denote the left and right Perron (column) eigenvectors of $E$, respectively, such that $U^TV=1=\hat{U}^T\hat{V}$, then we have for $X\in\mathcal{L}_{p-1}$,
\begin{eqnarray*}
V_X&=& \sum\limits_{Y\in\mathcal{L}_{p-1}}\sum\limits_{1\le i\le k(X*Y)}\hat{V}_{(X*Y)_i},\\
\theta U_X&=& \sum\limits_{Z\in\mathcal{L}_{p-1}}\sum\limits_{1\le i\le k(Z*X)}\hat{U}_{(Z*X)_i}.
\end{eqnarray*}

Hence the Perron root and eigenvectors of $E$ (equivalently, the Perron root and eigenvectors of $A$) can be calculated using the techniques discussed in~\cite{Parry}, since $E$ is irreducible whenever $A$ is irreducible. Here, we get a combinatorial expression for $\theta$ and $\hat{U}$ and $\hat{V}$ in terms of the correlation between the words corresponding to the zero entries in $E$. Note that the zero entries in $E$ correspond to all the pairs of non-adjacent edges in $\G_A$. It is usually a much bigger collection, especially when the size of $\mathcal{L}_{p-1}$ is large. Hence the results presented in this paper are computationally less expensive in most such cases. Other advantages of looking at $A$ instead of $E$ are discussed in~\cite{Williams}. 
\end{remark}


\section{Normalized Perron Eigenvectors of $A$}\label{sec:normalization}

Let $A$ be the adjacency matrix associated with given $\F$ and $\R$ and $p$ denote the length of the longest word in $\F \cup \R$. In this section, we assume that all words in $\R$ are of length $p$. Then $U$ and $V$ as obtained in Theorem~\ref{thm:perronvector} are left and right Perron (column) eigenvectors of $A$. We will obtain a simple expression for the normalizing factor $U^TV$. This will then be used to obtain a combinatorial expression for the Parry measure on the edge shift associated with $A$.

In~\cite{Parry}, subshifts with no repeated words (multiple edges) were considered and the concept of local escape rate was used to compute $U^TV$. We will use similar techniques here. The results in this section require $\Sigma_A$ to be an irreducible subshift with positive topological entropy, that is, the Perron root $\theta$ is strictly bigger than one. We will now define the concept of escape rate in the setting of an edge shift $\Sigma_A$. Let $\mu$ be the Parry measure on $\Sigma_A$.  

\begin{definition}[Escape rate]
Let $W$ be an allowed word in $\Sigma_A$. Consider the cylinder $C_W$ in $\Sigma_A$ based at $W$, which is the collection of all sequences which begin with $W$. The \emph{escape rate} into the hole $C_W$ measures the rate at which the orbits escape into the hole $C_W$ and is defined as 
\[
\rho(C_W) := -\lim_{n\rightarrow \infty} \dfrac{1}{n} \ln\mu(\mathcal{W}_n(W)),
\]
if the limit exists, where $\mathcal{W}_n(W)$ denotes the collection of all sequences in $\Sigma_A$ which do not include $W$ as a subword in their first $n$ positions.
\end{definition} 

In general, the escape rate is defined for any hole of positive measure in $\Sigma_A$, but we only consider that the hole is a cylinder. In the given setting, the limit exists and is given by the following result. Let $h_W(n)$ be the number of allowed words of length $n$ in $\Sigma_A$ that do not contain $W$ as a subword, let $\ln(\lambda_{W})=\lim_{n \to \infty}\frac{1}{n}\ln(h_W(n))$, and let $\theta$ be the Perron root of $A$. Then we have the following result which is a direct generalization of~\cite[Theorem 3.1]{Product}. 

\begin{theorem}\label{thm:esc_rate}
The escape rate into the hole $C_W$ satisfies $\rho(C_W)= \ln (\theta/\lambda_W)>0$. 
\end{theorem}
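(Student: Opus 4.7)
The plan is to reduce everything to a direct count of cylinders. Since $W$ is an allowed word of some length $k$, for all $n \ge k$ the set $\mathcal{W}_n(W)$ is the disjoint union $\bigsqcup_{Y} C_Y$, where $Y$ ranges over the allowed length-$n$ words in $\Sigma_A$ that do not contain $W$ as a subword; the number of such $Y$ is precisely $h_W(n)$. Applying Definition~\ref{def:sp} of the Shannon-Parry measure gives
\[
\mu(\mathcal{W}_n(W)) \;=\; \sum_{Y} \frac{U_{i(Y)}\, V_{t(Y)}}{\theta^{n}},
\]
where $i(Y)$ and $t(Y)$ denote the initial and terminal vertices of the edge-path $Y$.

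Since $\Sigma_A$ is irreducible, both Perron eigenvectors $U,V$ have strictly positive entries, and the vertex set is finite. Hence there exist constants $0 < c_1 \le c_2$ with $c_1 \le U_x V_y \le c_2$ for every pair $x,y$, which sandwiches $\mu(\mathcal{W}_n(W))$ between $c_1 h_W(n)/\theta^n$ and $c_2 h_W(n)/\theta^n$. Taking $-\tfrac{1}{n}\ln$ and sending $n\to\infty$ makes the constants disappear and yields
\[
\rho(C_W) \;=\; \ln\theta \;-\; \lim_{n\to\infty}\tfrac{1}{n}\ln h_W(n) \;=\; \ln(\theta/\lambda_W),
\]
which shows that the limit defining $\rho(C_W)$ exists and has the stated value.

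The remaining task, which I expect to be the main obstacle, is to prove the strict inequality $\lambda_W < \theta$. The natural approach is to realise the set of sequences avoiding $W$ as a subshift of finite type $X_W \subsetneq \Sigma_A$, obtained by adjoining $W$ to the collection of forbidden words and passing to the higher block presentation whose block length is at least $\max(p,|W|)-1$. In this common alphabet, the adjacency matrix $B$ of $X_W$ is dominated entry-wise by the adjacency matrix $A'$ of $\Sigma_A$, and $B \ne A'$ since at least one transition that realises an occurrence of $W$ in $\Sigma_A$ is killed in $X_W$ (here one uses that $W$ is allowed in $\Sigma_A$, and irreducibility of $\Sigma_A$ to place $W$ inside a long periodic orbit). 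Since $\Sigma_A$ is irreducible, $A'$ is an irreducible non-negative matrix, and by the standard strict monotonicity of the Perron root under entry-wise domination by an irreducible matrix, the Perron root of $B$ is strictly smaller than that of $A'$. Equivalently, $\lambda_W < \theta$, giving $\rho(C_W)>0$ and completing the proof.
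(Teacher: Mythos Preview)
Your argument is correct and is precisely the standard one. Note, however, that the paper does not actually supply its own proof of this statement: it simply records the result as ``a direct generalization of~\cite[Theorem~3.1]{Product}'' and moves on. What you have written is essentially that generalization spelled out---the cylinder decomposition of $\mathcal{W}_n(W)$, the uniform two-sided bound coming from positivity of the Perron eigenvectors, and the strict Perron monotonicity $B\lneq A'\Rightarrow \rho(B)<\rho(A')$ for the inequality $\lambda_W<\theta$. One minor clean-up: since $\Sigma_A$ is already an edge shift, the relevant block length for comparing $B$ and $A'$ is governed only by $|W|$, not by $p$; and to see $B\ne A'$ you do not need to invoke a periodic orbit, only that $W$ is allowed (so the single transition realising $W$ in the $|W|$-block presentation is present in $A'$ and absent in $B$).
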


\noindent For each word $W=(X_1*X_2)_{i_1}(X_2*X_3)_{i_2}\dots(X_{n-p+1}*X_{n-p+2})_{i_{n-p+1}},$ of length $n-p+1$ in $\Sigma_A$, there is a unique word $w=x_1\dots x_n$ of length $n$ in $\Sigma_\F$ where $X_{i} = x_i \dots x_{i+p-2}$. Also, if $m(w)=1$, then $h_W(n-p+1)=\tau_w(n)$, where $\tau_w(n)$ is the number of allowed words of length $n$ in $\Sigma_{\F\cup\{ w\}}$ counted with multiplicity with respect to $\R$. Hence, $\lambda_{W}=\theta_{w}$, where $ \ln \theta_w= \lim_{n \to \infty}\frac{1}{n}\ln \tau_w(n)$ can be calculated using the correlation between the words from $\F\cup\{w\}$ and $\R$ (as in Theorem~\ref{thm:Perron_root}). Note that $m(w)=1$ implies $\F\cup\{w\}\cup\R$ is reduced. 

\begin{remark}
If $w$ contains a repeated word, then $\lambda_W$ and $\theta_w$ may not be equal. 
%
For example, let $A=\begin{pmatrix}
0&2\\1&1
\end{pmatrix}$ and $\G_A=(\{0,1\},\E_A)$ where $\E_A=\{(01)_1,(01)_2,(10)_1,(11)_1\}$. If $W=(01)_2(11)_1$, then $w=011$, and $\tau_w(3)=|\{(011,2),(101,2),(110,1),(111,1)\}|=6$, $ h_W(2)=|\{(01)_1(10)_1,(01)_1(11)_1,(01)_2(10)_1,(10)_1(01)_1,(10)_1(01)_2,(11)_1(10)_1,$ $(11)_1(11)_1\}|$ $=7$.
Moreover, $\lambda_W>\theta_w$.
\end{remark}
\begin{definition}[Local escape rate]
Let $\alpha=\alpha_1\alpha_2\dots\in\Sigma_A$. The \emph{local escape rate} around $\alpha$ is defined as 
\[
\rho(\alpha)=\lim_{n\to\infty}\frac{\rho(C_{W_n})}{\mu(C_{W_n})},
\]
if it exists, where $W_n=\alpha_1\alpha_2\dots \alpha_n$. 
\end{definition}

%
For fixed $X,Y\in\mathcal{L}_{p-1}$, choose a point $\alpha\in\Sigma_A$ such that there exists a subsequence $(n_k)_k$ where for each $k$, $w_{n_k}$ (associated with $W_{n_k-p+1}$) is a word that begins with $X$ and ends with $Y$ (in other words $W_{n_k-p+1}$ is a path from vertex $X$ to vertex $Y$ in the graph $\G_A$). Such an $\alpha$ exists as $A$ is irreducible. We assume that there exist $X,Y\in\mathcal{L}_{p-1}$ such that the word $w_{n_k}$ has multiplicity 1 for all $k$. Using Theorem~\ref{thm:esc_rate} and the expression for the Parry measure, we get that 
\[
\rho(\alpha)=\lim_{k \to \infty}\frac{U^TV\theta^{n_k-p+1}\ln(\theta/\lambda_{W_{n_k-p+1}})}{U_{X}V_{Y}}=\frac{U^TV}{\theta^{p-1}U_{X}V_{Y}}\lim_{k \to \infty}\theta^{n_k}\ln(\theta/\theta_{w_{n_k}}),
\] where $U$ and $V$ are the Perron eigenvectors of $A$ as given in Theorem~\ref{thm:perronvector}.

\begin{remark}\label{rem:non_reduced}
(1) If $\alpha=(X_1*X_2)_{i_1}(X_2*X_3)_{i_2}\dots\in\Sigma_A$ is periodic with period $t$, then the word $\chi=X_1*X_2*X_3*\dots\in \Sigma_\F$ is periodic with period $t$. Using this and the local escape rate formula given by Ferguson and Pollicott in~\cite[Corollary 5.4.]{gibbs}, $\lim_{k \to \infty}\theta^{-k+1}(w_k,w_k)_\theta=\frac{1}{\rho(\alpha)}.$
\\
\end{remark}

\begin{definition}[Property (P)]
We say that the subshift $\Sigma_A$ satisfies \emph{property (P)} if there exist $X,Y\in\mathcal{L}_{p-1}$, a point $\alpha=\alpha_1\alpha_2\dots\in\Sigma_A$, and a strictly increasing sequence $(n_k)_{k\ge 1}$ of natural numbers such that for each $k\ge 1$, $W_{n_k-p+1}=\alpha_1\dots \alpha_{n_k-p+1}$ begins with $X$ and ends with $Y$ and $m(w_{n_k})=1$, where $w_{n_k}$ is the word in $\Sigma_\F$ associated to $W_{n_k-p+1}$. 
\end{definition}

\begin{remark}
For property (P) to be satisfied, it is enough to show the existence of two words $X,Y\in\mathcal{L}_{p-1}$ for which there are two words $Z,W\in\mathcal{L}$, each with multiplicity 1, such that the word $Z$ starts with $X$ and ends with $Y$, and the word $W$ both starts and ends with $Y$. A large family of subshifts satisfy property (P). For instance, if at least one vertex in $\G_A$ has a single loop (in other words, the adjacency matrix $A$ has at least one diagonal entry equal to 1), then $\Sigma_A$ satisfies property (P).   
\end{remark}

%

Let $P(z)$ be the matrix function corresponding to the collections $\F$ and $\R$ as defined in ~\eqref{eq:matrix_P} and let $R_i(z)$ be the $i^{th}$ row sum of $P^{-1}(z)$. Define rational function $R(z)$ as follows:
\begin{equation}
R(z)= z\sum_{i=1}^\ell\left(1-\frac{1}{m_i}\right)R_i(z)-z\sum_{j=1}^sR_{\ell+j}(z),\label{eq:R}
\end{equation} 
By Theorem~\ref{thm:Perron_root}, the Perron root of $A$ is given by the largest real zero of $z-q+R(z)$. Note that $R(z)=\sum DP^{-1}$, where $\sum$ denotes the sum of all the entries of the matrix and $D$ is the diagonal matrix as defined in Equation~\eqref{eqn:diagD}.

Now assume that $\Sigma_A$ satisfies property (P). With notations as in the definition of property (P), $w_{n_k}$ starts with $X$ and ends with $Y$, for all $k\ge 1$. Denote $\F_k=\F\cup\{w_{n_k}\}$, then $\F_{k}\cup\R$ is reduced for all $k\ge 1$. Suppose $P_{k}(z)$ be the matrix function (see~\eqref{eq:matrix_P}) and $R_{k}(z)$ be the rational function (see~\eqref{eq:R}) associated with the collections $\F_k$ and $\R$. Let $\mathcal{D}_k(z)$ and $\mathcal{D}(z)$ denote the determinant of $P_k(z)$ and $P(z)$, respectively. Then 
\[
P_k(z)=\begin{pmatrix}
P(z)&-zY^T(z)\\X(z)D&Z(z)
\end{pmatrix},
\] 
where $Z(z)=(w_{n_k},w_{n_k})_z$, 
\[
X^T(z)=\begin{pmatrix}
(r_1,w_{n_k})_z\\\vdots\\(r_\ell,w_{n_k})_z\\(a_1,w)_z\\\vdots\\(a_s,w_{n_k})_z
\end{pmatrix}=\begin{pmatrix}
(\tilde{r}_1,X)_z\\\vdots\\(\tilde{r}_\ell,X)_z\\(\tilde{a}_1,X)_z\\\vdots\\(\tilde{a}_s,X)_z
\end{pmatrix}\text{ and }\ Y^T(z)=\begin{pmatrix}
(w_{n_k},r_1)_z\\\vdots\\(w_{n_k},r_\ell)_z\\(w_{n_k},a_1)_z\\\vdots\\(w_{n_k},a_s)_z
\end{pmatrix}=\begin{pmatrix}
(Y,r_1)_z\\\vdots\\(Y,r_\ell)_z\\(Y,a_1)_z\\\vdots\\(Y,a_s)_z
\end{pmatrix},\] where for a word $u$, $\tilde{u}$ is the word obtained by removing the first symbol of $u$. 
Observe that if $U_X(z)=1-\sum XDP^{-1}(z)$ and $V_Y(z)=1-\sum Y(DP^{-1})^T(z)$, then $U_X(\theta)$ and $V_Y(\theta)$ are the Perron eigenvectors obtained as in Theorem~\ref{thm:perronvector}. Using the inverse formula for a $2\times2$ block matrix, we get  
\begin{equation}\label{eq:diff}
R_k(z)-R(z)=\frac{-z\mathcal{D}(z)}{\mathcal{D}_k(z)}U_X(z)V_Y(z).
\end{equation}
Then we have the following result, proof of which is similar to the proof of~\cite[Theorem 7.6]{Parry}, using~\eqref{eq:diff} and Remark~\ref{rem:non_reduced}(1).

\begin{theorem}
With the notations as above, if $\Sigma_A$ satisfies property (P), then 
\[
U^TV=\theta^{p-1}(1+R'(\theta)),
\] 	where $R'(\theta)$ is the derivative of the function $R(z)$ (defined in~\eqref{eq:R}) evaluated at $z=\theta$.
\end{theorem}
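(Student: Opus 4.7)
The plan is to combine three ingredients developed earlier in this section: (i) the local-escape-rate identity $\rho(\alpha) = \frac{U^T V}{\theta^{p-1} U_X V_Y} \lim_{k\to\infty} \theta^{n_k}\ln(\theta/\theta_{w_{n_k}})$, (ii) the formula \eqref{eq:diff} for $R_k(z) - R(z)$, and (iii) the asymptotic $\theta^{-n_k+1}(w_{n_k}, w_{n_k})_\theta \to 1/\rho(\alpha)$ from Remark~\ref{rem:non_reduced}(1). The strategy is to evaluate $\lim_k \theta^{n_k}\ln(\theta/\theta_{w_{n_k}})$ explicitly via a perturbation argument and then substitute back to solve for $U^T V$.

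First I would fix $\alpha \in \Sigma_A$, vertices $X, Y \in \mathcal{L}_{p-1}$ and the subsequence $(n_k)$ supplied by property (P), so that each $w_{n_k}$ has multiplicity $1$ and $\F_k := \F \cup \{w_{n_k}\}$ together with $\R$ remains reduced. The rational function $R_k(z)$ associated with $\F_k$ and $\R$ is then well defined, and $\theta_{w_{n_k}}$ is the largest real zero of $z - q + R_k(z)$. Subtracting the defining equations for $\theta$ and $\theta_{w_{n_k}}$, Taylor-expanding $R_k(\cdot)$ about $\theta$, and using $R_k \to R$ and $R_k' \to R'$ pointwise yields
\[
\theta - \theta_{w_{n_k}} \sim \frac{R_k(\theta) - R(\theta)}{1 + R'(\theta)}, \qquad k \to \infty,
\]
with $1 + R'(\theta) \ne 0$ because $\theta$ is a simple pole of $F(z)$.

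Next, I would substitute \eqref{eq:diff} and expand $\mathcal{D}_k(\theta)/\mathcal{D}(\theta)$ as the Schur complement $Z(\theta) + \theta X(\theta) D P^{-1}(\theta) Y^T(\theta)$. The dominant contribution comes from $Z(\theta) = (w_{n_k}, w_{n_k})_\theta$, whose degree in $z$ grows with $n_k$, while the cross-term is bounded uniformly in $k$; this gives the leading-order estimate
\[
R_k(\theta) - R(\theta) \sim \frac{U_X V_Y}{(w_{n_k}, w_{n_k})_\theta}.
\]
Using $\ln(\theta/\theta_{w_{n_k}}) \sim (\theta - \theta_{w_{n_k}})/\theta$ (as $\theta_{w_{n_k}} \to \theta$), multiplying by $\theta^{n_k}$, and invoking Remark~\ref{rem:non_reduced}(1) to replace $(w_{n_k}, w_{n_k})_\theta$ by $\theta^{n_k-1}/\rho(\alpha)$, the explicit factors of $\theta^{n_k}$ cancel and one obtains
\[
\lim_{k\to\infty} \theta^{n_k}\ln(\theta/\theta_{w_{n_k}}) \;=\; \frac{U_X V_Y\, \rho(\alpha)}{1 + R'(\theta)}.
\]
Feeding this back into the escape-rate identity and cancelling the common factors $\rho(\alpha)$ and $U_X V_Y$ leaves exactly $U^T V = \theta^{p-1}(1 + R'(\theta))$.

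The main obstacle will be the careful bookkeeping of signs and powers of $\theta$ through the Schur-complement expansion and through \eqref{eq:diff}, so that the various $\theta$-prefactors cancel to produce precisely $\theta^{p-1}$ (and not $\theta^{p-2}$ or $\theta^p$); the parallel argument in~\cite[Theorem~7.6]{Parry} serves as a reliable template. A secondary subtlety is justifying the Ferguson--Pollicott asymptotic recorded in Remark~\ref{rem:non_reduced}(1) along the subsequence $(n_k)$ arising from property (P): when $\alpha$ can be chosen periodic this is automatic, and for the general case permitted by property (P) one must verify that the limiting behaviour of $(w_{n_k}, w_{n_k})_\theta$ still equals $\theta^{n_k-1}/\rho(\alpha)$, typically arranged by an auxiliary periodic-orbit approximation.
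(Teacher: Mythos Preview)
Your proposal is correct and follows precisely the route the paper indicates: the paper does not give a detailed argument but states that the proof is ``similar to the proof of~\cite[Theorem 7.6]{Parry}, using~\eqref{eq:diff} and Remark~\ref{rem:non_reduced}(1)'', and your three ingredients (the local-escape-rate identity, the perturbation formula~\eqref{eq:diff}, and the Ferguson--Pollicott asymptotic) are exactly those. Your observation about the periodicity hypothesis in Remark~\ref{rem:non_reduced}(1) versus the general $\alpha$ allowed by property~(P) is apt; the paper does not address this explicitly, though the sufficient condition given in the remark following the definition of property~(P) shows one can always arrange an eventually periodic $\alpha$.
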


\begin{exam}
Let $\Sigma=\{0,1\},\F=\{010\},\R=\{100(3)\}$. Here $R(z)=\frac{2 - z}{z^3+z+2}$. The Perron root of the associated adjacency matrix $A$ is $\theta=2$.
The left and right eigenvectors of $A$ is calculated in Example~\ref{ex:evector} and is given as $U=\begin{pmatrix}
3/2\\1\\1/2\\1
\end{pmatrix}$,  and $V=\begin{pmatrix}
2/3\\2/3\\4/3\\4/3
\end{pmatrix}$. Note that $U^TV=\theta^2(1+R'(\theta))=11/3$. 
\end{exam} 

\begin{exam}\label{ex:sparseP}
Let $\Sigma=\{0,1,\dots,q-1\}$, $\F=\{a_1,\dots,a_s\}$ $\R=\{r_1(m_1),\dots,r_\ell(m_\ell)\}$ be such that $|a_i|=|r_i|=2$ satifying $(r_i,r_i)_z=z, (a_j,a_j)_z=z$ and $(r_i, a_j)_z = (a_j, r_i)_z = 0 $ for all $r_i \in \R$, $a_j \in \F$. Further, $ (r_i, r_j)_z = (a_i, a_j)_z = 0$ for all $r_i \neq r_j \in \R$ and $a_i \neq a_j \in \F$. Then the Perron root $\theta$ is the largest real zero of

\[
z-q-\left(\dfrac{\alpha+\ell-s}{z}\right)
\] where $\alpha=m_1+\dots+m_\ell$.
Therefore $\theta=\frac{q^2+\sqrt{q^2+4(\alpha-\ell+s)}}{2}$. 

\noindent For instance, consider $q=4$, $\R=\{10(a),20(b),30(c)\}$, $\F=\emptyset$. Set $\alpha=a+b+c$. Then the Perron root $\theta$ is the largest real zero of \[
z-4-\left(\dfrac{\alpha-3}{z}\right).
\]
Therefore $\theta=2+\sqrt{1+\alpha}$. Also in this case,
\[
U=\dfrac{1}{2+\sqrt{1+\alpha}}\begin{pmatrix}
\alpha-1+\sqrt{1+\alpha}\\ 2+\sqrt{1+\alpha}\\ 2+\sqrt{1+\alpha}\\ 2+\sqrt{1+\alpha}
\end{pmatrix}, \ \ 	V=\dfrac{1}{2+\sqrt{1+\alpha}}\begin{pmatrix}
2+\sqrt{1+\alpha}\\	a+1+\sqrt{1+\alpha}\\ b+1+\sqrt{1+\alpha}\\ c+1+\sqrt{1+\alpha}
\end{pmatrix}.
\]
Since property (P) is satisfied by the adjacency matrix for the corresponding shift space, $U^TV=\theta(1+R'(\theta))$, where $R(z)=\dfrac{\alpha-3}{z}$. Hence
\[
U^TV=\dfrac{5-\alpha+\sqrt{1+\alpha}}{2+\sqrt{1+\alpha}}.
\]	
\end{exam}

This normalization result immediately gives an alternate definition for the Parry measure on $\Sigma_A$ and is stated below as a corollary. 

\begin{corollary}[A combinatorial expression for the Parry measure]\label{cor:parry}
With the notations as above, assume that $\Sigma_A$ satisfies property (P).
If $\ W=(X_1*X_2)_{i_1}\dots(X_n*X_{n+1})_{i_n}$ is an allowed word in $\Sigma_A$, then the Parry measure of the cylinder based at $W$ is given by,
\begin{equation*}
\mu(C_W)=\dfrac{U_{X_1}V_{X_{n+1}}}{\theta^{n+p-1} \left(1+R'(\theta)\right)},
\end{equation*} 
where $U$ and $V$ are as given in equations \eqref{eq:U} and \eqref{eq:V}.
\end{corollary}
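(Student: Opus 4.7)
The proof is essentially a direct substitution: the corollary follows by plugging the explicit normalization constant from the preceding theorem into the general Shannon--Parry measure formula. The plan is therefore short and modular.

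First, I would recall that by Definition~\ref{def:sp}, the Shannon--Parry measure on the edge shift $\Sigma_A$ (with $A$ indexed by $\mathcal{L}_{p-1}$) assigns to the cylinder based on an allowed word $W=(X_1*X_2)_{i_1}\dots(X_n*X_{n+1})_{i_n}$ of length $n$ the value
\[
\mu(C_W)=\frac{\widetilde{U}_{X_1}\widetilde{V}_{X_{n+1}}}{\theta^{n}},
\]
where $\widetilde{U},\widetilde{V}$ are any left and right Perron column eigenvectors of $A$ normalized so that $\widetilde{U}^{T}\widetilde{V}=1$. Note that this expression depends on $U, V$ only through the product $\widetilde{U}_{X_1}\widetilde{V}_{X_{n+1}}$, so it is invariant under rescaling $U\mapsto cU,\;V\mapsto c^{-1}V$.

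Next I would observe that the specific Perron eigenvectors $U=(U_X)$ and $V=(V_X)$ provided by Theorem~\ref{thm:perronvector} through the formulas \eqref{eq:U} and \eqref{eq:V} are \emph{not} normalized so that $U^{T}V=1$. However, since $\Sigma_A$ is assumed to satisfy property~(P), the preceding normalization theorem tells us the exact value of $U^{T}V$, namely
\[
U^{T}V \;=\; \theta^{p-1}\bigl(1+R'(\theta)\bigr).
\]
Hence the rescaled vectors $\widetilde{U}:=U/\sqrt{\theta^{p-1}(1+R'(\theta))}$ and $\widetilde{V}:=V/\sqrt{\theta^{p-1}(1+R'(\theta))}$ satisfy $\widetilde{U}^{T}\widetilde{V}=1$ and
\[
\widetilde{U}_{X_1}\widetilde{V}_{X_{n+1}} \;=\; \frac{U_{X_1}V_{X_{n+1}}}{\theta^{p-1}(1+R'(\theta))}.
\]

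Finally I would substitute this identity into the Shannon--Parry formula displayed in the first step, which gives
\[
\mu(C_W)\;=\;\frac{U_{X_1}V_{X_{n+1}}}{\theta^{n+p-1}\bigl(1+R'(\theta)\bigr)},
\]
which is precisely the asserted combinatorial expression. There is no real obstacle here, because all the analytical content sits in the preceding normalization theorem (whose derivation uses the local escape rate identity from Remark~\ref{rem:non_reduced}(1) together with \eqref{eq:diff}); the corollary itself is a one-line normalization argument. The only point to be careful about is to explicitly record the invariance of the Parry formula under rescaling of the eigenvector pair, so that it is legitimate to feed in the non-normalized $U,V$ from \eqref{eq:U}--\eqref{eq:V} and simply divide by $U^{T}V$.
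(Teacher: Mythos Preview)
Your proposal is correct and matches the paper's own treatment: the paper simply states that ``this normalization result immediately gives an alternate definition for the Parry measure'' and records the corollary without further proof. Your substitution of $U^{T}V=\theta^{p-1}(1+R'(\theta))$ into the Shannon--Parry formula of Definition~\ref{def:sp} is exactly the intended argument.
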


\begin{remark}
As discussed in Remark~\ref{rem:non_reduced}(2), the above mentioned techniques cannot be used if $\Sigma_A$ does not satisfy property (P). However, we conjecture that Corollary~\ref{cor:parry} holds true provided $\F\cup\R$ is reduced and all words from $\R$ have length $p$. That is, if $U$ and $V$ denote the Perron eigenvectors of $A$ as given in Theorem~\ref{thm:perronvector}, then $U^TV=\theta^{p-1}(1+R'(\theta))$. We are tempted to give an example where the conjecture is true. Consider the shift $\Sigma_A$ with $A=\begin{pmatrix}
0&2\\3&2
\end{pmatrix}$. Here $\Sigma=\{0,1\},$ $\F=\{00\}$ and $\R=\{01(2),10(3),11(2)\}$. Clearly property (P) is not satisfied as, there does not exist a point in $\Sigma_\F$ that contains no repeated words. Here we get $F(z)=\dfrac{z}{z-q+R(z)}$, where $R(z)=\dfrac{-3(z+2)}{z^2-3}$. Hence the Perron root of $A$ is given by $\theta=1+\sqrt{7}$. Also, by Theorem~\ref{thm:perronvector}, the left and right eigenvalues of $A$ is given by $U=\begin{pmatrix}
\sqrt{7}-1\\2
\end{pmatrix}$ and $V=\begin{pmatrix}
2(\sqrt{7}-2)\\5-\sqrt{7}
\end{pmatrix}$, respectively. Note that $U^TV=\theta(1+R'(\theta))=28-8\sqrt{7}$.
\end{remark}

		\section{$\Sigma_{\F}$ as a factor of $\Sigma_A$}
		\label{sec:conjugacy and measures}
		In this section, we study the edge shift $\Sigma_{\F}$ as a factor of the edge shift $\Sigma_A$ and discuss some properties of Markov measures on these spaces. Let $\Sigma, \F , \R$ and $p$ as before and $A$ be the adjacency adjacency matrix associated with $\F$ and $\R$. Let $\hat{A}$ be the binary matrix indexed by $\mathcal{L}_{p-1}$, \textit{compatible with $A$}, that is, $\hat{A}_{XY}=1$ if and only if $A_{XY}>0$. Let $\Sigma_A$ and $\Sigma_{\hat{A}}$ be the edge shifts associated with the matrices $A$ and $\hat{A}$, respectively. Note that $\Sigma_{\hat{A}} = \Sigma_{\F}^{[p]}$. Observe that if $\R$ is an empty collection, then $A$ and $\hat{A}$ are the same. However, it was observed earlier, that $\Sigma_A$ and $\Sigma_{\hat{A}}$ are not conjugate to each other if the collection $\R$ is non-empty.
		
		 Let $\G = (\mathcal{V}, \mathcal{E})$ and $\hat{\G} = (\hat{\mathcal{V}}, \hat{\mathcal{E}})$ be the digraphs associated to the matrices $A$ and $\hat{A}$, respectively, both having the same vertex sets, $\mathcal{V} = \hat{\mathcal{V}} = \mathcal{L}_{p-1}$, but (possibly) different edge sets. For $X,Y \in \mathcal{L}_{p-1}$, let $\mathcal{E}_{XY}$ and $\hat{\mathcal{E}}_{XY}$ denote the set of edges from $X$ to $Y$ in $\G$ and $\hat{\G}$, respectively. If $X *Y\in\mathcal{L}_p$, then $\hat{\mathcal{E}}_{XY}=\{X*Y\}$ and $\mathcal{E}_{XY} = \left\lbrace (X * Y)_{j} \, : \, 1 \le j \le k(X * Y) \right\rbrace$, else, $\hat{\mathcal{E}}_{XY}=\mathcal{E}_{XY}=\emptyset$.

		
		\begin{definition}[Projection map for $\G$ and $\hat{\G}$]
			With notations as above, a projection map $\pi: \G \longrightarrow \hat{\G}$ is defined as follows. On vertex sets, $\pi(\mathcal{V}) = \hat{\mathcal{V}} = \mathcal{V} $ is an identity map and on edge sets, $\pi (\mathcal{E}) = \hat{\mathcal{E}}$, with $\pi (\mathcal{E}_{XY}) = \hat{\mathcal{E}}_{XY}$ for all $X,Y \in \mathcal{V}$. 
		\end{definition}
		
		The projection map $\pi$ identifies all the edges in $\G$ from vertex $X$ to $Y$ as a single edge. Whenever $X*Y$ is allowed, all the edges from $X$ to $Y$ in $\G$ are projected onto the only edge $X$ to $Y$ in $\hat{\G}$.
		
		\begin{exam}
			We look at a simple example to understand the map $\pi$. Let $q = 2$, $\Sigma = \{ 0, \, 1\}$. Let $\mathcal{F} = \{ 11 \}$ and a matrix $A = \begin{pmatrix}
				3 & 1 \\ 2 & 0
			\end{pmatrix} $. Then $\hat{A} = \begin{pmatrix}
				1 & 1 \\ 1 & 0
			\end{pmatrix} $. Consider the digraphs $\G$ and $\hat{\G}$ defined by the adjacency matrices $A$ and $\hat{A}$ as depicted in Figure \ref{fig:G_and_hatG}. The vertex set for both the graphs is $ \mathcal{V} = \{ 0, \, 1\}$. Let us name the edges in $\G$ as $a, b, \dots, f$ as shown in the figure. Then $\pi (\{ a,b,c \}) = \{ 00\}$, $\pi (\{ d \}) = \{ 01\}$, $\pi (\{ e,\, f \}) = \{ 10 \}$.
			
			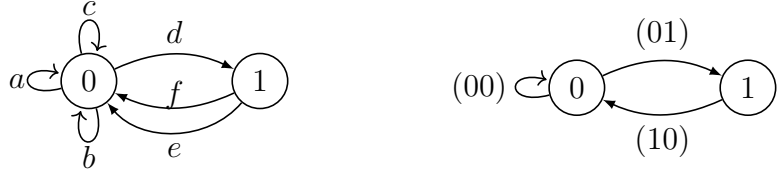
\begin{figure}[h]
				\centering
				$\displaystyle
				\begin {tikzpicture}[-latex ,auto ,node distance =1.5cm and 2.25cm ,on grid ,
				semithick ,
				state/.style ={draw, circle}] 
				\node[state] (A) {$0$};
				\node[state] (B) [right =of A] {$1$};
				
				\path (A) edge [loop left] node[right=-.4cm] {$a$} (A);
				\path (A) edge [loop below] node[above=-.5cm] {$b$} (A);
				\path (A) edge [loop above] node[below=-.4cm] {$c$} (A);
				\path (A) edge [bend left =25] node[above] {$d$} (B);
				\path (B) edge [bend right =-50] node[below] {$e$} (A);
				\path (B) edge [bend right = -25] node[below =-.5 cm] {$f$} (A);
			\end{tikzpicture}
			\hspace{2cm}
			\begin {tikzpicture}[-latex, baseline=-1.2cm, auto ,node distance =1.5cm and 2.25cm ,on grid ,
			semithick ,
			state/.style ={draw, circle}] 
			\node[state] (A) {$0$};
			\node[state] (B) [right =of A] {$1$};
			
			\path (A) edge [loop left] node[right=-1cm] {$(00)$} (A);
			\path (A) edge [bend left =25] node[above] {$(01)$} (B);
			\path (B) edge [bend right = -25] node[below] {$(10)$} (A);
		\end{tikzpicture}
		$
		\caption{Graph $\G$ on the left and graph $\hat{\G}$ on the right}
		\label{fig:G_and_hatG}
	\end{figure}	
\end{exam}

The definition of the projection map $\pi$ can be extended to finite paths as well. A word $W = (X_{1} * X_{2})_{i_{1}} (X_{2} * X_{3})_{i_{2}} \dots  (X_{n} * X_{n+1})_{i_{n}}$ denotes a path from $X_{1}$ to $X_{n+1}$ of length $n$ in $\G$. As $i_{i}, i_{2}, \dots, i_{n}$ vary over all the possible choices, we get all the paths from $X_{1}$ to $X_{n+1}$ with the fixed intermediate vertices (in order) $X_{2}, \dots, X_{n}$. We define $\pi$ to map all these different paths in $\G$ onto a fixed path in $\hat{\G}$ as,
\[ \pi \left( (X_{1} * X_{2})_{i_{1}} (X_{2} * X_{3})_{i_{2}} \dots  (X_{n} * X_{n+1})_{i_{n}} \right) := (X_{1} * X_{2})(X_{2} * X_{3}) \dots (X_{n} * X_{n+1}),\]
for all $1 \le i_{j} \le k(X_{j}*X_{j+1})$, $1 \le j \le n$.

The shifts $\Sigma_A$ and $\Sigma_{\F}^{[p]}$ represent the spaces of all one-sided infinite paths on the graphs $\G$ and $\hat{\G}$, respectively. The map $\pi$ can be further extended to $ \pi : \Sigma_A \longrightarrow \Sigma_{\F}^{[p]}$ as follows.
\begin{equation}
	\label{eq:projection}
	\pi \left( (X_{1} * X_{2})_{i_{1}} (X_{2} * X_{3})_{i_{2}} (X_{3} * X_{4})_{i_{3}} \dots \right) :=  (X_{1} * X_{2}) (X_{2} * X_{3}) (X_{3} * X_{4}) \dots \ ,
\end{equation}
for all $1 \le i_{j} \le k(X_{j}*X_{j+1})$, $j \ge 1$.


\begin{proposition}
	The shift $\Sigma_{\F}$ is a factor of the edge shift $\Sigma_A$ with factor map $\pi$.
\end{proposition}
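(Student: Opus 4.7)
\medskip

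\noindent\textbf{Proof proposal.} The plan is to verify the three defining properties of a factor map for $\pi$: it is a well-defined continuous surjection from $\Sigma_A$ onto $\Sigma_\F^{[p]}$ that commutes with the shift, and then to compose with the conjugacy between $\Sigma_\F^{[p]}$ and $\Sigma_\F$ discussed in Section~\ref{sec:SFT_binary} to obtain $\Sigma_\F$ as a factor of $\Sigma_A$.

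First I would verify that $\pi$ is well-defined, i.e., that for any $\alpha = (X_1 * X_2)_{i_1}(X_2 * X_3)_{i_2}\dots \in \Sigma_A$, its image $\pi(\alpha) = (X_1 * X_2)(X_2 * X_3)\dots$ lies in $\Sigma_\F^{[p]}$. This is immediate, because the existence of the labeled edge $(X_j * X_{j+1})_{i_j}$ in $\G$ forces $A_{X_j X_{j+1}} = k(X_j * X_{j+1}) \ge 1$, hence $X_j * X_{j+1} \in \mathcal{L}_p$, which is exactly the condition for consecutive symbols in $\Sigma_\F^{[p]}$. Shift-commutation $\pi \circ \sigma_A = \sigma_{\hat A} \circ \pi$ and continuity follow at once from the fact that $\pi$ is a $1$-block code: the $n$-th coordinate of $\pi(\alpha)$ depends only on the $n$-th coordinate of $\alpha$, so $\pi$ is a sliding block code and therefore continuous, and applying the shift before or after $\pi$ yields the same sequence.

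The remaining point is surjectivity. Given any $\beta = (X_1 * X_2)(X_2 * X_3)\dots \in \Sigma_\F^{[p]}$, each two-letter pattern $X_j * X_{j+1}$ is allowed in $\Sigma_\F^{[p]}$, so $\hat{A}_{X_j X_{j+1}} = 1$, which by compatibility implies $A_{X_j X_{j+1}} = k(X_j * X_{j+1}) \ge 1$. For each $j \ge 1$ one can therefore choose an index $i_j$ with $1 \le i_j \le k(X_j * X_{j+1})$; setting
\[
\alpha = (X_1 * X_2)_{i_1}(X_2 * X_3)_{i_2}(X_3 * X_4)_{i_3}\dots
\]
produces a sequence in $\Sigma_A$ whose image under $\pi$ is exactly $\beta$. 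Thus $\pi(\Sigma_A) = \Sigma_\F^{[p]}$.

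I do not expect any real obstacle here; the only point that deserves a remark is that this is genuinely a factor map rather than a conjugacy precisely when $\R$ is non-empty, since in that case some fibre $\pi^{-1}(\beta)$ has cardinality $\prod_j k(X_j * X_{j+1}) > 1$, so $\pi$ is not injective. To finish, composing $\pi$ with the sliding block conjugacy $\Sigma_\F^{[p]} \to \Sigma_\F$ given by $(X_1 * X_2)(X_2 * X_3)\dots \mapsto x_1 x_2 x_3 \dots$ (where $X_j = x_j \dots x_{j+p-2}$) yields the desired factor map from $\Sigma_A$ onto $\Sigma_\F$.
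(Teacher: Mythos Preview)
Your proof is correct and follows essentially the same approach as the paper: the paper's proof simply draws the commuting square and asserts that surjectivity and shift-commutation follow directly from the definitions of $\pi$ and the shift maps, while you have spelled out those verifications (well-definedness, the $1$-block code argument, and the explicit preimage construction) in detail. Your additional remarks on non-injectivity when $\R \ne \emptyset$ and on composing with the conjugacy $\Sigma_\F^{[p]} \to \Sigma_\F$ are not in the paper's proof but are helpful clarifications.
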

\begin{proof}
	\begin{equation*}
		\label{eq:ch1_commuting_diag} 
		\begin{tikzcd}
			\Sigma_A\arrow[r, "\sigma_A"] \arrow[d, "\pi"] & \Sigma_A						\arrow[d, "\pi"] \\
			\Sigma_{\mathcal{F}}^{[p]} \arrow[r, "\sigma"] & 									\Sigma_{\mathcal{F}}^{[p]}
		\end{tikzcd}
	\end{equation*}
	We first recall that $\pi : \Sigma_A \longrightarrow \Sigma_{\F}$ is called a {\it factor map} or {\it projection} if $\pi$ is onto and the above diagram commutes. The proof now follows directly from the definitions of $\pi$ as given in \eqref{eq:projection} and of the left shift maps $\sigma_A$ and $\sigma$.
\end{proof}

We now recall some basic concepts related to a Markov measure on a shift space. A square matrix is called a {\it stochastic matrix} if it is non-negative, with each row sum being $1$. Let $\hat{\mathcal{P}}$ be a stochastic matrix compatible with $\hat{A}$. Then $1$ is the Perron root of $\hat{\mathcal{P}}$ with right Perron eigenvector given by $(1,1,\dots,1)^T$. Let $\hat{\rho}$ be the normalized left Perron eigenvector of $\hat{\mathcal{P}}$. A {\it Markov chain} corresponding to $\hat{\mathcal{P}}$ on the graph $\hat{\G}$ is the assignment of probability to the vertices given by $\nu(X) := \hat{\rho}_{X}$, and to the edges given by $\nu(X*Y | X) := \hat{\mathcal{P}}_{XY} $ which is the conditional probability of the edge $X*Y$. The probability of the edge $X*Y$ is then given by $\nu(X*Y) = \hat{\rho}_{X} \hat{\mathcal{P}}_{XY}$. The probability of a path is expressed as the product of probability of the initial vertex and conditional probabilities of the edges in the path.   

Let $\hat{W} = (X_{1} * X_{2}) (X_{2} * X_{3}) \dots  (X_{n} * X_{n+1})$ be an allowed word of length $n$ in $\Sigma_{\mathcal{F}}^{[p]}$ and let $C_{\hat{W}}$ be the cylinder in $\Sigma_{\mathcal{F}}^{[p]}$ based at $\hat{W}$. The stochastic matrix $\hat{\mathcal{P}}$ defines a \emph{Markov measure} $\nu$ on $\Sigma_{\F}^{[p]}$ which is described for the cylinder sets as follows,
\[  \nu (C_{\hat{W}}) = \hat{\rho}_{X_{1}} \, \hat{\mathcal{P}}_{X_{1}X_{2}} \, \hat{\mathcal{P}}_{X_{2}X_{3}} \, \dots \, \hat{\mathcal{P}}_{X_{n}X_{n+1}}.
\]

The Parry measure on $\Sigma_{\F}^{[p]}$ defined in Section~\ref{sec:SFT_binary} is a special kind of Markov measure. The stochastic matrix associated with it is called the \emph{Parry matrix}, and is given as follows.
\begin{definition}
	The Parry matrix $\hat{\mathcal{P}}$ associated to a binary matrix $\hat{A}$ is a stochastic matrix compatible with $\hat{A}$ and is defined as
	\begin{equation*}
		\label{eq:Parry matrix for binary}
		\hat{\mathcal{P}}_{XY} := \frac{\hat{A}_{XY}\, \hat{V}_{Y}}{\hat{\theta} \, 			\hat{V}_{X}}, \qquad 	\text{for } X, Y \in \mathcal{L}_{p-1},
	\end{equation*}
	where $\hat{\theta}$ is the Perron root and $\hat{U}$ and $\hat{V}$ are the left and right Perron (column) eigenvectors of $\hat{A}$ such that $\hat{U}^T\hat{V}=1$. The normalized left Perron eigenvector $\hat{\rho}$ of $\hat{\mathcal{P}}$ is given by
	\[ 
	\hat{\rho}_{X} = \hat{U}_{X}\hat{V}_{X}.
	\]
\end{definition}

Let $A$ be the adjacency matrix associated with $\F$ and $\R$. A Markov measure on $\Sigma_A$ is defined in a similar way.  Let $\mathcal{P}$ be a stochastic matrix compatible with $A$ and $\rho$ be its normalized left Perron eigenvector. A \textit{Markov chain} $\mu$ associated to $\mathcal{P}$ on the graph $\G$ (associated to matrix $A$) is given by $\mu(X) = \rho_{X}$ and $\sum\limits_{1 \le i \le k(X*Y)} \mu((X*Y)_{i}|X) = \mathcal{P}_{XY}$. Here the sum of the conditional probabilities of all the edges from $X$ to $Y$ is the $XY^{th}$ entry of $\mathcal{P}$. Let $C_{W}$ be the cylinder in $\Sigma_A$ based at a word $ W = (X_{1} * X_{2})_{i_{1}} (X_{2} * X_{3})_{i_{2}} \dots  (X_{n} * X_{n+1})_{i_{n}}$ in $\Sigma_A$. The stochastic matrix $\mathcal{P}$ gives rise to a Markov measure on $\Sigma_A$, which is described for cylinder sets as follows: for a cylinder $C_W$,
\[	\mu (C_{W}) = \mu(X_{1}) \, \mu((X_{1} * X_{2})_{i_{i}}\, \big| \,X_{1}) \, \mu((X_{2} * X_{3})_{i_{2}}\, \big| \,X_{2}) \, \dots \, \mu((X_{n} * X_{n+1})_{i_{n}}\, \big| \,X_{n}).
\]

Consider the preimage of the word $\hat{W}= (X_{1} * X_{2}) \dots  (X_{n} * X_{n+1})$ in $\Sigma_{\F}^{[p]}$ under $\pi$, which is the set of all paths from $X_{1}$ to $X_{n+1}$ with the fixed intermediate vertices $X_{2}, \dots, X_{n}$, in order, in the graph $\G$. That is,
\[ \pi^{-1}(\hat{W}) = \left\lbrace (X_{1} * X_{2})_{i_{1}} \dots  (X_{n} * X_{n+1})_{i_{n}} \, \big| \,  1 \le i_{j} \le k(X_{j}*X_{j+1}), \ 1 \le j \le n  \right\rbrace.
\]
The Markov measure of the set of all cylinder sets based at words $W \in \pi^{-1}(\hat{W})$ is given as
\begin{align*}
	\mu \left( \bigcup\limits_{W \in \pi^{-1}(\hat{W})}  C_{W} \right) & =  \sum\limits_{W \, \in \, \pi^{-1}(\hat{W})} \mu \left( C_{W} \right) \\
	& = \sum\limits_{i_{1} = 1}^{k(X_{1}*X_{2})} \dots \sum\limits_{i_{n} = 1}^{k(X_{n}*X_{n+1})}  \mu(X_{1}) \mu((X_{1} * X_{2})_{i_{i}}\big| X_{1}) \dots  \mu((X_{n} * X_{n+1})_{i_{n}} \big| X_{n})  \\
	& = \rho_{X_{1}}\, \mathcal{P}_{X_{1}X_{2}} \, \mathcal{P}_{X_{2}X_{3}} \, \dots \, \mathcal{P}_{X_{n}X_{n+1}},
\end{align*}
since the union on the left hand side above is a disjoint union of cylinder sets $C_{W}$ for all $W \in \pi^{-1}(\hat{W})$.

The Shannon-Parry measure on $\Sigma_A$ as defined in Definition~\ref{def:sp} is also a Markov measure, very similar to the Parry measure on $\Sigma_{\F}^{[p]}$. The corresponding stochastic matrix is known as the \textit{Shannon-Parry matrix} and is defined below. Let $\theta$ be the Perron eigenvalue and $U$ and $V$ be the corresponding right and left Perron (column) eigenvectors of $A$ such that $U^{T}V = 1$. 
\begin{definition}
	The Shannon-Parry matrix $\mathcal{P}$ associated with $A$ and the normalized left Perron eigenvector $\rho$ of $\mathcal{P}$ are defined as
	\begin{equation*}
		\begin{split}
			\label{eq:Parry matrix}
			\mathcal{P}_{XY} &:= \frac{A_{XY}\, V_{Y}}{\theta \, V_{X}}, \qquad 				\text{for } X, Y \in \mathcal{L}_{p-1} \\
			\rho_{X} &:= U_{X}V_{X}.
		\end{split}
	\end{equation*}
\end{definition}

\begin{remark}
	Let $ W = (X_{1} * X_{2})_{i_{1}} (X_{2} * X_{3})_{i_{2}} \dots  (X_{n} * X_{n+1})_{i_{n}}$ be a word of length $n$ in $\Sigma_A$. The Shannon-Parry measure $\mu$ of $C_{W}$ is given in terms of the Shannon-Parry matrix $\mathcal{P}$ as 
	\begin{equation*}
		\label{eq:Parry_cylinder}
		\mu (C_{W}) = \rho_{X_{1}} \, \frac{\mathcal{P}_{X_{1}X_{2}}}{A_{X_{1}X_{2}}} \,\frac{\mathcal{P}_{X_{2}X_{3}}}{A_{X_{2}X_{3}}} \, \dots \frac{\mathcal{P}_{X_{n}X_{n+1}}}{A_{X_{n}X_{n+1}}}.
	\end{equation*}
	Note that if $W' = (X_{1} * X_{2})_{j_{1}} (X_{2} * X_{3})_{j_{2}} \dots  (X_{n} * X_{n+1})_{j_{n}}$ is another path of length $n$ in $\G$, then $\mu (C_{W}) = \mu (C_{W'})$. In this case, all the edges from two fixed vertices are given equal conditional probabilities which need not be the case for a general Markov measure on $\Sigma_A$. 
\end{remark}
%
%
%

\begin{theorem}
	Any Markov measure on $\Sigma_A$ induces a Markov measure on $\Sigma_{\F}^{[p]}$ as a push forward measure under $\pi$. That is, for every Markov measure $\mu$ on $\Sigma_A$, there exists a Markov measure $\nu$ on $\Sigma_{\F}^{[p]}$ such that for any cylinder $C_{\hat{W}}$ in $\Sigma_{\F}^{[p]}$ based at a word $\hat{W} \in \mathcal{L}$, 
	\[  \nu(C_{\hat{W}}) = \mu \left( \pi^{-1}(C_{\hat{W}}) \right).
	\]
\end{theorem}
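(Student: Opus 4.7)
The plan is to construct the pushforward measure $\nu := \pi_{*}\mu$ explicitly on cylinder sets of $\Sigma_{\F}^{[p]}$ and then exhibit a stochastic matrix $\hat{\mathcal{P}}$ compatible with $\hat{A}$ together with a probability vector $\hat{\rho}$ so that $\nu$ takes the Markov form with respect to $(\hat{\mathcal{P}}, \hat{\rho})$. Since $\pi$ is continuous and surjective, $\nu$ is a well-defined Borel probability measure on $\Sigma_{\F}^{[p]}$, and it is $\sigma$-invariant because $\pi$ intertwines $\sigma_A$ and $\sigma$.

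First I would evaluate $\nu$ on a cylinder $C_{\hat{W}}$ based at $\hat{W} = (X_{1}*X_{2})(X_{2}*X_{3})\dots(X_{n}*X_{n+1})$. The crucial observation is that
\[
\pi^{-1}(C_{\hat{W}}) \;=\; \bigsqcup_{W \in \pi^{-1}(\hat{W})} C_{W},
\]
a disjoint union indexed by the finite set
\[
\pi^{-1}(\hat{W}) = \{(X_{1}*X_{2})_{i_{1}}\dots(X_{n}*X_{n+1})_{i_{n}} : 1 \le i_{k} \le k(X_{k}*X_{k+1})\}.
\]
Using the Markov description of $\mu$ on $\Sigma_A$ and the definition $\sum_{i} \mu((X*Y)_{i} \mid X) = \mathcal{P}_{XY}$, summation over the $i_{k}$'s factors as a telescoping product, giving
\[
\nu(C_{\hat{W}}) \;=\; \rho_{X_{1}} \, \mathcal{P}_{X_{1}X_{2}} \, \mathcal{P}_{X_{2}X_{3}} \cdots \mathcal{P}_{X_{n}X_{n+1}}.
\]

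Next I would define $\hat{\mathcal{P}}_{XY} := \mathcal{P}_{XY}$ and $\hat{\rho}_{X} := \rho_{X}$, and verify the three conditions that make this into a Markov chain on $\hat{\G}$. Stochasticity of $\hat{\mathcal{P}}$ is inherited from $\mathcal{P}$. Compatibility with the binary matrix $\hat{A}$ holds because $\mathcal{P}_{XY} > 0$ iff $A_{XY} > 0$ iff $\hat{A}_{XY} = 1$, using that $\mathcal{P}$ was compatible with $A$. Finally, $\hat{\rho}$ is the normalized left Perron eigenvector of $\hat{\mathcal{P}}$ since the matrices are literally equal. With these three checks in place, the formula for $\nu(C_{\hat{W}})$ is exactly the defining formula for the Markov measure on $\Sigma_{\F}^{[p]}$ associated to $(\hat{\mathcal{P}}, \hat{\rho})$, so $\nu$ is Markov. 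A standard extension from cylinders to the Borel $\sigma$-algebra (cylinders form a generating semi-algebra) completes the identification.

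I do not anticipate a serious obstacle here; the argument is essentially the telescoping computation in the second paragraph. The only subtle point worth stating carefully is that compatibility of $\mathcal{P}$ with the integer matrix $A$ translates, after forgetting multiplicities, to compatibility with the binary matrix $\hat{A}$, so the push-forward lands inside the correct family of Markov measures on $\Sigma_{\F}^{[p]}$ and not merely on an abstract SFT. One may also remark, as a sanity check, that when $\mu$ is the Shannon--Parry measure on $\Sigma_A$, the same computation does \emph{not} in general yield the Parry measure on $\Sigma_{\F}^{[p]}$, since $\mathcal{P}_{XY} = A_{XY}V_{Y}/(\theta V_{X})$ differs from the Parry stochastic matrix of $\hat{A}$; the discrepancy accounts for multiplicities and is precisely what is explored in the subsequent material of the paper.
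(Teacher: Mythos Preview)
Your proof is correct and follows essentially the same approach as the paper: both arguments hinge on the observation that the stochastic matrix $\mathcal{P}$ compatible with $A$ is automatically compatible with $\hat{A}$, and that summing $\mu$ over the fibers $\pi^{-1}(\hat{W})$ collapses to the product $\rho_{X_1}\mathcal{P}_{X_1X_2}\cdots\mathcal{P}_{X_nX_{n+1}}$. Your write-up is in fact a bit more explicit than the paper's in verifying stochasticity, compatibility, and the left-eigenvector condition for $(\hat{\mathcal{P}},\hat{\rho})=(\mathcal{P},\rho)$, but the underlying argument is the same.
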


\begin{proof}
	Let $\hat{W} = (X_{1} * X_{2}) (X_{2} * X_{3}) \dots  (X_{n} * X_{n+1})$ be a word of length $n$ in $\Sigma_{\mathcal{F}}^{[p]}$ and let $C_{\hat{W}}$ be the cylinder in $\Sigma_{\mathcal{F}}^{[p]}$ based at $\hat{W}$. Let $\mathcal{P}$ be any stochastic matrix compatible with $A$ and let $\mu$ be the corresponding Markov measure on $\Sigma_A$. 
	Observe that $\mathcal{P}$ is a stochastic matrix compatible with $\hat{A}$ as well, and thus it defines a Markov measure $\nu$ on $\Sigma_{\F}^{[p]}$ as
	\[ \nu(C_{\hat{W}}) = \rho_{X_{1}}\, \mathcal{P}_{X_{1}X_{2}} \, \mathcal{P}_{X_{2}X_{3}} \, \dots \, \mathcal{P}_{X_{n}X_{n+1}}. \]
	
	Note that $\mu \left( \pi^{-1}(C_{\hat{W}}) \right) = \mu \left( \bigcup\limits_{W \, \in \, \pi^{-1}(\hat{W})}  \, C_{W} \right) = \rho_{X_{1}}\, \mathcal{P}_{X_{1}X_{2}} \, \mathcal{P}_{X_{2}X_{3}} \, \dots \, \mathcal{P}_{X_{n}X_{n+1}}$, which is same as $\nu(C_{\hat{W}})$. Hence the result.
	%
	%
	%
	%
	%
\end{proof}
Since the Shannon-Parry measure on $\Sigma_A$ is a Markov measure, we obtain the following corollary.
\begin{corollary}
	The Shannon-Parry measure on $\Sigma_A$ induces a Markov measure on $\Sigma_{\F}$ as a push forward measure under the projection $\pi$.
\end{corollary}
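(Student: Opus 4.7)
The plan is to derive this as a direct consequence of the preceding theorem together with the conjugacy between $\Sigma_{\F}$ and its $p$-block presentation $\Sigma_{\F}^{[p]}$. First I would observe that the Shannon-Parry measure on $\Sigma_A$ is, by its very definition, the Markov measure associated with the Shannon-Parry stochastic matrix $\mathcal{P}$ compatible with $A$, with initial distribution $\rho_X = U_X V_X$. Hence it is a particular instance of the Markov measures to which the preceding theorem applies.

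Next I would apply that theorem verbatim. Since $\mathcal{P}$ is also compatible with $\hat{A}$ (as $A_{XY}>0$ iff $\hat{A}_{XY}=1$), the stochastic matrix $\mathcal{P}$ together with $\rho$ defines a Markov measure $\nu$ on $\Sigma_{\F}^{[p]}$, and the theorem gives $\nu(C_{\hat{W}})=\mu(\pi^{-1}(C_{\hat{W}}))$ for every cylinder, i.e. $\nu = \pi_{*}\mu$. Thus the pushforward of the Shannon-Parry measure under $\pi$ is a Markov measure on $\Sigma_{\F}^{[p]}$.

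Finally, I would transfer this to $\Sigma_{\F}$ via the sliding block code conjugacy $\Phi: \Sigma_{\F} \to \Sigma_{\F}^{[p]}$ recalled in Section~\ref{sec:SFT_binary}. Composing with $\Phi^{-1}$ (which is itself a sliding block code and hence measurable), the measure $(\Phi^{-1})_{*}\nu = (\Phi^{-1} \circ \pi)_{*}\mu$ is a Markov measure on $\Sigma_{\F}$: on cylinders based at words of length $\ge p$ it factors as a product of conditional probabilities depending only on the preceding $p-1$ symbols, which is precisely the definition of a (higher-step) Markov measure on $\Sigma_{\F}$.

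The only subtle point, and the one worth flagging, is the word ``Markov'' in the conclusion: the induced measure on $\Sigma_{\F}$ is not in general $1$-step Markov with respect to the original alphabet $\Sigma$, but is $1$-step Markov after passing to the $p$-block presentation (equivalently, it is $(p-1)$-step Markov on $\Sigma_{\F}$). Apart from this bookkeeping, the corollary requires no computation beyond specializing the preceding theorem to the Markov measure $\mu$ determined by the Shannon-Parry matrix.
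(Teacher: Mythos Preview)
Your proposal is correct and takes essentially the same approach as the paper: the paper's proof is the single sentence ``Since the Shannon-Parry measure on $\Sigma_A$ is a Markov measure, we obtain the following corollary,'' which is exactly your first two paragraphs specialized to the Shannon-Parry stochastic matrix. Your additional care in passing from $\Sigma_{\F}^{[p]}$ to $\Sigma_{\F}$ via the conjugacy $\Phi$, and your remark that the resulting measure is $(p-1)$-step Markov on $\Sigma_{\F}$, make explicit an identification the paper leaves implicit.
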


As evident from the above proof, a sufficient condition for a Markov measure on $\Sigma_{\F}^{[p]}$ to be a push forward of the Shannon-Parry measure on some $\Sigma_A$ under $\pi$, is that the stochastic matrices corresponding the two measures should be the same. An intriguing question now arises is that, given any Markov measure on $\Sigma_{\F}^{[p]}$, can it be obtained as a push forward of a Shannon-Parry measure on $\Sigma_A$, for some non-negative irreducible integer matrix $A$. We show that this is not true in general. We further obtain certain conditions on the Markov measure under which this holds true.

\begin{theorem}
	\label{thm:converse_pushforward1}
	Suppose $\nu$ is a Markov measure on $\Sigma_{\F}^{[p]}$ such that the associated stochastic matrix $\mathcal{P}_{\nu}$ of $\nu$ has all rational entries. Then there exists a non-negative irreducible integer matrix $A$ such that $\nu$ is the push-forward of the Shannon-Parry measure on $\Sigma_A$.
\end{theorem}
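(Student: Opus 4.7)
The idea is to realise the stochastic matrix $\mathcal{P}_\nu$ of $\nu$ itself, up to a scalar, as a Shannon--Parry matrix. Let $\hat A$ be the binary matrix underlying $\Sigma_{\F}^{[p]}$. Since $\nu$ is a Markov measure on $\Sigma_{\F}^{[p]}$, the stochastic matrix $\mathcal{P}_\nu$ has support contained in (and we may assume equal to) that of $\hat A$. All non-zero entries of $\mathcal{P}_\nu$ are rational by hypothesis, so let $M$ be a positive integer multiple of the least common multiple of their denominators and set
\[
A \;:=\; M\,\mathcal{P}_\nu .
\]
Then $A$ is a non-negative integer matrix whose support coincides with that of $\hat A$; in particular $A$ is irreducible and compatible with $\hat A$, so the projection $\pi\colon \Sigma_A \to \Sigma_{\F}^{[p]}$ of Section~\ref{sec:conjugacy and measures} is well-defined.

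Next, I would read off the Perron data of $A$. Because $\mathcal{P}_\nu$ is stochastic, $A\mathds{1} = M\,\mathcal{P}_\nu\mathds{1} = M\mathds{1}$, so the all-ones vector is a strictly positive right eigenvector with eigenvalue $M$. By the Perron--Frobenius theorem this forces $\theta = M$ and $V = \mathds{1}$. The Shannon--Parry matrix of $A$ is therefore
\[
(\mathcal{P}_A)_{XY} \;=\; \frac{A_{XY}\,V_Y}{\theta\,V_X} \;=\; \frac{A_{XY}}{M} \;=\; (\mathcal{P}_\nu)_{XY},
\]
so $\mathcal{P}_A = \mathcal{P}_\nu$. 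Furthermore, with $V=\mathds{1}$ the normalisation $U^TV=1$ reduces to $\sum_X U_X = 1$, and $U$ satisfies $U^T A = M\,U^T$, i.e.\ $U^T\mathcal{P}_\nu = U^T$. Hence $U$ is \emph{the} probability left eigenvector of $\mathcal{P}_\nu$, which is precisely the stationary distribution of the Markov chain $\nu$. Consequently $\rho_A(X) = U_XV_X = U_X$ agrees with the initial distribution of $\nu$.

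Finally, by the previous theorem characterising push-forwards under $\pi$, the push-forward of the Shannon--Parry measure on $\Sigma_A$ is the Markov measure on $\Sigma_{\F}^{[p]}$ determined by the stochastic matrix $\mathcal{P}_A$ and initial distribution $\rho_A$; since $(\mathcal{P}_A,\rho_A) = (\mathcal{P}_\nu,\rho_\nu)$, this measure equals $\nu$, completing the proof.

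The only mild obstacle is the support condition on $\mathcal{P}_\nu$. If $\nu$ fails to charge some allowed transition of $\Sigma_{\F}^{[p]}$, then $\mathcal{P}_\nu$ has a strict zero where $\hat A$ has a $1$, and $A=M\mathcal{P}_\nu$ will not be compatible with $\hat A$; in that case one first passes to the irreducible component $\hat A'$ of $\hat A$ supporting $\nu$ (equivalently, enlarges $\F$ by the unused edges), and then runs the argument above with $\hat A'$ in place of $\hat A$. Choosing $V=\mathds{1}$ is the key trick: it forces both $\theta$ and the left eigenvector to come out exactly right, and simultaneously avoids any fractional rescaling of the rows.
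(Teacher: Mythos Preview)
Your proof is correct and follows essentially the same approach as the paper: both define $A$ as an integer multiple of the stochastic matrix $\mathcal{P}_\nu$ (the paper takes the least common multiple of the denominators), and then use that the all-ones vector is a positive right Perron eigenvector of a stochastic matrix to conclude that the Shannon--Parry matrix of $A$ coincides with $\mathcal{P}_\nu$. Your write-up simply spells out the details (identifying $\theta=M$, $V=\mathds{1}$, and matching the stationary distributions) that the paper leaves as ``simple calculations'', and adds a remark on the support issue that the paper does not discuss.
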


\begin{proof}
	Given that $\mathcal{P}_{\nu}$ is a rational matrix, any nonzero $XY^{th}$ entry of $\mathcal{P}_{\nu}$ is $\frac{n_{XY}}{d_{XY}}$ (reduced form). Set $L = \lcm  \left\lbrace d_{XY} \, : \, ({\mathcal{P}_{\nu}})_{XY} > 0 \right\rbrace$. It can be verified with simple calculations that the matrix $A := L \, \mathcal{P}_{\nu}$ is the required matrix. Here $\mathcal{P}_{\nu}$ itself is the Shannon-Parry matrix for $A$. We use the fact that $1$ is the Perron root of a stochastic matrix with corresponding right eigenvector $(1,1,\dots,1)$.
	
	%
	
\end{proof}

\begin{exam}
	We give an example to show that the hypothesis given in Theorem~\ref{thm:converse_pushforward1} is not necessary.
	Let $\Sigma=\{0,1\}$, and $\F=\{11\}$. We give two examples of Markov measures $\nu_1$ and $\nu_2$ on $\Sigma_\F^{[2]}$ with associated stochastic matrices $\mathcal{P}_{1}$ and $\mathcal{P}_{2}$, respectively, having irrational entries. We prove that $\nu_1$ is the push-forward for the Shannon-Parry measure on $\Sigma_A$ for some $A$ whereas $\nu_2$ is not. \\	
	\begin{enumerate}
		\item Let  $\mathcal{P}_1=\begin{pmatrix}
			2(\sqrt{2}-1)&3-2\sqrt{2}\\1&0
		\end{pmatrix}$. Note that $\nu_1$ is a push-forward of the Shannon-Parry measure on $\Sigma_A$ where $A=\begin{pmatrix}
			2&1\\1&0
		\end{pmatrix}$.
		\item Let $\mathcal{P}_2=\begin{pmatrix}
			1/\pi&1-1/\pi\\1&0
		\end{pmatrix}$. If there exists a non-negative integer matrix $A=\begin{pmatrix}
			A_{11}&A_{12}\\A_{21}&A_{22}
		\end{pmatrix}$ such that $\mathcal{P}_{XY}=\frac{A_{XY}V_Y}{\theta V_X}$ where $\theta$ is the Perron root of $A$ and $V$ is the right Perron eigenvector of $A$. We have, in particular, $\frac{1}{\pi}=\mathcal{P}_{11}=\frac{A_{11}}{\theta}$ which gives $\theta=A_{11}\pi$. Note that $\theta$ is a algebraic number as it satisfies the characteristic equation of $A$ which gives a contradiction. 
	\end{enumerate}
\end{exam}

\begin{theorem}
	\label{thm:converse_pushforward2}
	Let $\F$ be a given collection of forbidden words and $p$ be the length of the longest word in $\F$. Set $\R = \mathcal{L}_{p}$, such that all words in $\R$ are assigned equal multiplicities, say $M$. Let $A$ be the adjacency matrix associated with $\F$ and $\R$, and $\Sigma_A$ be the corresponding edge shift. Then the Parry measure on $\Sigma_{\F}^{[p]}$ is the push forward of the Shannon-Parry measure on $\Sigma_A$.
\end{theorem}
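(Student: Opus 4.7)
The plan is to exploit the fact that in this very symmetric setting the adjacency matrix $A$ is simply a scalar multiple of the binary matrix $\hat{A}$, which forces the Shannon-Parry matrix of $A$ to coincide with the Parry matrix of $\hat{A}$; then a short computation identical in spirit to the previous theorem's will finish the argument.

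First I would record the structural identity $A = M\hat{A}$. Indeed, since all words in $\R = \mathcal{L}_p$ have multiplicity $M$, the entry $A_{XY} = k(X*Y)$ equals $M$ whenever $X*Y \in \mathcal{L}_p$ and is $0$ otherwise, while $\hat{A}_{XY}$ is the indicator of the same event. This immediately gives that $\hat{A}$ and $A$ share all right and left eigenvectors and that their Perron data are related by $\theta = M\hat{\theta}$ and $V = \hat{V}$, $U = \hat{U}$ under the common normalization $U^T V = \hat{U}^T \hat{V} = 1$. In particular the stationary distributions satisfy $\rho_X = U_X V_X = \hat{U}_X \hat{V}_X = \hat{\rho}_X$.

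Next I would compare the two stochastic matrices. Substituting into the definitions,
\[
\mathcal{P}_{XY} \;=\; \frac{A_{XY} V_Y}{\theta V_X} \;=\; \frac{M\hat{A}_{XY}\hat{V}_Y}{M\hat{\theta}\hat{V}_X} \;=\; \frac{\hat{A}_{XY}\hat{V}_Y}{\hat{\theta}\hat{V}_X} \;=\; \hat{\mathcal{P}}_{XY}.
\]
Thus the Shannon-Parry matrix of $A$ equals the Parry matrix of $\hat{A}$, and the left Perron eigenvectors $\rho$, $\hat{\rho}$ agree as well. This is the key conceptual step; everything else will be bookkeeping.

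Finally I would verify the push-forward identity on cylinders, from which it extends to the whole sigma-algebra. For an allowed word $\hat{W} = (X_1*X_2)(X_2*X_3)\cdots(X_n*X_{n+1})$ in $\Sigma_\F^{[p]}$, the fiber $\pi^{-1}(C_{\hat W})$ is the disjoint union of the $M^n$ cylinders $C_W$ with $W = (X_1*X_2)_{i_1}\cdots(X_n*X_{n+1})_{i_n}$, $1 \le i_j \le M$. For the Shannon-Parry measure $\mu$ on $\Sigma_A$, each edge $(X_j*X_{j+1})_{i_j}$ receives conditional probability $\mathcal{P}_{X_jX_{j+1}}/A_{X_jX_{j+1}} = \hat{\mathcal{P}}_{X_jX_{j+1}}/M$, so
\[
\mu\bigl(\pi^{-1}(C_{\hat W})\bigr) \;=\; M^n\cdot \rho_{X_1}\prod_{j=1}^{n} \frac{\hat{\mathcal{P}}_{X_jX_{j+1}}}{M} \;=\; \hat{\rho}_{X_1}\prod_{j=1}^{n} \hat{\mathcal{P}}_{X_jX_{j+1}} \;=\; \nu(C_{\hat W}),
\]
where $\nu$ is the Parry measure on $\Sigma_\F^{[p]}$. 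Since cylinders generate the Borel sigma-algebra and both measures are probability measures, this identity on cylinders yields $\nu = \pi_* \mu$.

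I do not expect a genuine obstacle here: once the identification $A = M\hat{A}$ is noticed, the result reduces to the push-forward computation already carried out for general Markov measures in the preceding theorem. The only mild care needed is to check that the normalizations of the Perron eigenvectors can be chosen consistently, which is immediate because $A$ and $\hat{A}$ have the same eigenspaces.
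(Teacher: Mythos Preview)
Your proposal is correct and takes essentially the same approach as the paper: observe that $A = M\hat{A}$, conclude that the Shannon-Parry matrix of $A$ coincides with the Parry matrix of $\hat{A}$, and then invoke the push-forward computation already established for Markov measures. The paper's proof is terser, simply stating that ``simple calculations show that $\hat{\mathcal{P}} = \mathcal{P}$'' and appealing implicitly to the preceding theorem; you have spelled out those calculations and the cylinder-level verification explicitly.
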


\begin{proof}
	Let $\hat{A}$ be the binary matrix compatible with $A$ and $\hat{\mathcal{P}}$ be the associated Parry matrix. It is given that $A = M \hat{A}$. If $\mathcal{P}$ denotes the Shannon-Parry matrix associated to $A$, then simple calculations show that $\hat{\mathcal{P}} = \mathcal{P}$. That is, the same stochastic matrix induces the Parry measure on $\Sigma_{\mathcal{F}}^{[p]}$ and the Shannon-Parry measure on $\Sigma_A$. Hence the result follows. 
	%
	%
\end{proof}

\begin{exam}
	We give an example to show that the hypothesis given in Theorem~\ref{thm:converse_pushforward2} is not necessary. Let $\Sigma=\{0,1\}$ and $\F=\{11\}$. The adjacency matrix of $\Sigma_\F^{[2]}$ is given by $\hat{A}=\begin{pmatrix}
		1&1\\1&0
	\end{pmatrix}$ which has Perron root $\hat{\theta}=(1+\sqrt{5})/2$ and right Perron eigenvector $\hat{V}=\begin{pmatrix}
		(1+\sqrt{5})/2\\1
	\end{pmatrix}$. Let $\R=\{00(2),10(4)\}$. Note that $\R$ does not satisfy the conditions as in Theorem~\ref{thm:converse_pushforward2}. The adjacency matrix associated with $\F$ and $\R$ are $A=\begin{pmatrix}
		2&1\\4&0 
	\end{pmatrix}$ which has Perron root $\theta=1+\sqrt{5}$ and right Perron eigenvector $V=\begin{pmatrix}
		(1+\sqrt{5})/4\\1
	\end{pmatrix}$. Note that for $X,Y\in\{0,1\},$ $\frac{A_{XY}V_Y}{V_X}=\frac{\hat{A}_{XY}\hat{V}_Y}{\hat{V}_X}.$  
\end{exam}

\section{Concluding remarks and Potential questions}\label{sec:conclusion}
\noindent In this section, we discuss some immediate consequences of our work and also potential questions that emerge from this study.

The methods proposed in this paper for computing the Perron root and eigenvectors (Theorems~\ref{thm:Perron_root},\ref{thm:perronvector}) are particularly useful when the matrices $P,Q$ are sparse. That is, a lot of pairs of distinct words in $\F\cup\R$ have zero cross-correlations (i.e., $(u,w)_z=0$ for all $u\ne w\in\F\cup\R$). For instance, if all the cross-correlations are zero, then the matrices $P,Q$ take the form of a diagonal matrix. Hence it is much easier to use our methods than the traditional methods.
Here the rational function $F(z)$ of which the Perron root $\theta$ of the adjacency matrix $A$ is the largest real pole and also the expressions of the Perron eigenvectors take a simple form since 
\[
R_i(z)=Q_i(z)=\begin{cases}
	\left(z\left(1-\dfrac{1}{m_i}\right)(r_i,r_i)_z-z^{|r_i|}\right)^{-1}, & 1\le i\le \ell,\\
	\left(-z(a_i,a_i)_z\right)^{-1}, & \ell+1\le i\le \ell+s.
\end{cases}
\]
For an illustration, revisit Example~\ref{ex:sparseP}. Moreover, this method is extremely useful when the matrix $A_{n\times n}$ differs from $\mathbbm{1}_{n\times n}$ (the matrix where all entries are 1) only on a few places, in which case, the matrices $P,Q$ are small and the calculations are simpler in terms on computational complexity. Also, these algorithms do not depend on the size of $A$ hence may have applications in network theory where usually matrices of large order are studied. Another interesting observation about our work is that the expressions of the Perron root and eigenvectors only depend on the correlation between the words corresponding to zero or greater than 1 entries in the matrix, which implies that when the correlations are the same, no matter where they are located in the matrix, these quantities are the same.

For a matrix $A$ with non-negative rational entries, choose the smallest $L>0$ so that $LA$ is an integer matrix. Then if $\theta,U,V$ are the Perron root and left and right Perron eigenvectors of $A$, respectively, then $L\theta,U,V$ are the Perron root and left and right Perron eigenvectors of $LA$, respectively. Thus our results are easily extendable to matrices with non-negative rational entries. 

%
%
	%
	Further, since the recurrence relations obtained in~\cite{Guibas} (when the collection $\R$ is empty) are useful in non-transitive games, period prefix-synchronized coding, information theory or network theory, we believe that the generalized recurrence relations obtained in Section~\ref{sec:recurrence relations} may have several other independent applications. 
	
	There are some immediate potential questions that emerge from this study such as obtaining an expression for the Perron eigenvectors when $\R$ has words of different lengths or when $\F\cup\R$ is not reduced or proving the normalization result, as discussed in Section~\ref{sec:normalization}, when property (P) is not satisfied. 
	Moreover, this work leads to the question of characterizing collections $\F$ and $\R$ that maximize the entropy of the shift $\Sigma_A$.

	\addcontentsline{toc}{chapter}{References}
	\bibliographystyle{abbrv}  
	\renewcommand{\bibname}{References} 
	\bibliography{mybib} 
	
	\appendix
	\section*{Appendix}\label{sec:app}
	\begin{proof}[Proof of Theorem~\ref{thm:non_reduced_gen_fun}]
		To obtain the first equation, consider the word $w x$ obtained by adjoining a word $w$ in $f(n)$, and a symbol $x \in \Sigma$. Then, by similar arguments as in the proof of Theorem~\ref{thm:reduced_gen_fun}, we get 
		\begin{equation*}
			\sum_{x \in \Sigma} \ \sum_{w\in\mathcal{L}_n} m(wx) \ = \ q  f(n)  +  \sum\limits_{i=1}^{\ell}  \frac{g_{r_{i}}(n+1)}{m_{i}} (m_{i}-1). 
		\end{equation*}
		A word $w x$ gets counted either in $f(n+1)$ or in $f_{a_{i}}(n+1)$ with extra counting and thus
		\begin{equation}
			\label{eq:non_reduced_step1}
			q  f(n)  +  \sum\limits_{i=1}^{\ell}  \frac{g_{r_{i}}(n+1)}{m_{i}} (m_{i}-1) = f(n+1) + \sum\limits_{i = 1}^{s} f_{a_{i}}(n+1)\  + \ T(n).
		\end{equation}
		Here the term $T(n)$ appears only when $w x$ ends with a forbidden word $a_{i}$, such that $a_{i}$ contains a repeated word $r_{j}$, except at the end. In this case, $\alpha \in (a_{i}, r_{j})$ with $\alpha > |r_{j}|$. This implies $r_{j}$ is a subword of $w$ and thus $w$ is counted $m_j$ times in $q f(n)$ in the LHS of~\eqref{eq:non_reduced_step1}. Further, in the RHS of \eqref{eq:non_reduced_step1}, $w$ is counted only once in $f_{a_{i}}(n+1)$, since $r_{j}$ being a subword of $a_{i}$ does not contribute to any extra counting. Thus we need to add $(m_{j}-1)f_{a_{i}}(n+1)$ in all such cases. The term $T(n)$ refers to all such extra counting. It is given by 
		\[
		T(n)= \sum\limits_{i=1}^{s}  \ \sum\limits_{j = 1}						^{\ell} \sum\limits_{\substack{{ 0 \, < \, \alpha \, \in \, (a_{i}, 				r_{j})}\\{\alpha \, > \,|r_{j}|} }} (m_{j} -1) f_{a_{i}}(n+1).
		\] 
		Using the definition of $\gamma(a_{i}, r_{j})$ as given in~\eqref{eq:gamma}, we obtain the following
		\begin{equation}\label{eq:non_reduced_step1_final}
			\begin{split}
				q f(n) -  f(n+1) &=  \sum\limits_{i = 1}^{s} f_{a_{i}}(n+1) -\sum\limits_{i=1}^{\ell} \left( 1 -\frac{1}{m_{i}} \right) g_{r_{i}}(n					+1) \\
				& \ \qquad + \ \sum\limits_{i=1}^{s}  \ \left[ \sum\limits_{j = 1}^{\ell} \gamma(a_{i}, r_{j}) (m_{j} -1) \right] \, f_{a_{i}}(n+1).
			\end{split}
		\end{equation}
		Multiplying the above equation by $z^{-n}$ and taking the sum over $n \ge 0$, we get~\eqref{eq:non_reduced1} as required.
		
		In order to obtain the second equation, we adjoin a repeated word $r_{k}$ to an allowed word $w$ counted in $f(n)$ as a suffix. Our claim is that $f(n)$ satisfies
		\begin{equation}\label{eq:non_reduced_step2_final}
			\begin{split}
				f(n) & =  g_{r_{k}}(n+|r_{k}|) - \sum\limits_{j=1}^{\ell} \ \sum			\limits_{0 \, < \, s \, \in \, (r_{j}, r_{k})} \left( 1- \frac{1}					{m_{j}} \right) g_{r_{j}}(n+s)  + \sum\limits_{i=1}^{s} \ \sum\limits_{0 < t \in (a_{i}, r_{k})^{|r_{k}|}} f_{a_{i}}(n+t)  \\
				&  + \ \sum\limits_{i=1}^{s} \ \sum\limits_{0 \, < \, t \,\in \, (a_{i}, r_{k})} \ \sum\limits_{j = 1}^{\ell} \gamma(a_{i},r_{j}) (m_{j} -1) f_{a_{i}}(n+t).
			\end{split}		
		\end{equation}
		Two possible situations arise, as described below:\\
		1) If no forbidden word appears on the join of $w$ and $r_{k}$, then $w$ is counted in $g_{r_{k}}(n + |r_{k}|)$, the first term on the RHS of~\eqref{eq:non_reduced_step2_final}. Further, if a repeated word $r_{j}$ appears on the join, then there exists $0 < s \in (r_{j}, r_{k})$ such that $w$ is counted extra $m_{j} -1$ times in $g_{r_{k}}(n + |r_{k}|)$. The total number of $w$ in $f(n)$ which give $r_{j}$ on the join is $\frac{g_{r_{j}}(n+s)}{m_{j}}$, for some $0 < s \in (r_{j}, r_{k})$. We subtract these extra counting to obtain the second term on the RHS of~\eqref{eq:non_reduced_step2_final}.\\
		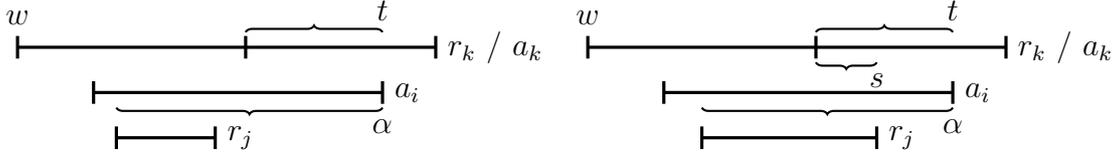
\begin{figure}
			\begin{center}
				\begin{tikzpicture}
					\beginpgfgraphicnamed{situation-a}
					\begin{scope}[very thick]
						\draw[snake=brace] (3,0.2) -- (4.8,0.2) [thick] node[above] 							{$t$};
						\draw (0,-0.15) -- (0,0.15)  node[anchor=south] {$w$} ;
						\draw (0,0) -- (3,0) -- (5.5,0)  node[anchor=west] {$r_{k}$ /	$a_{k}$};
						\draw (3,-0.15) -- (3,0.15);
						\draw (5.5,-0.15) -- (5.5,0.15);
						\draw[snake=brace, mirror snake] (1.3,-0.8) -- (4.8,-0.8) 								[thick] node[below] {$\alpha$};
						\draw (1,-0.6) -- (4.8,-0.6) node[anchor=west] {$a_{i}$};
						\draw (1,-0.75) -- (1,-0.45);
						\draw (4.8,-0.75) -- (4.8,-0.45);
						\draw (1.3,-1.2) -- (2.6,-1.2) node[anchor=west] {$r_{j}$};
						\draw (1.3,-1.05) -- (1.3,-1.35);
						\draw (2.6,-1.05) -- (2.6,-1.35)  ;
						\draw[snake=brace] (10.5,0.2) -- (12.3,0.2) [thick] 									node[above] {$t$};
						\draw (7.5,-0.15) -- (7.5,0.15)  node[anchor=south] {$w$} ;
						\draw (7.5,0) -- (10.5,0) -- (13,0)  node[anchor=west] 								{$r_{k}$ / $a_{k}$};
						\draw (10.5,-0.15) -- (10.5,0.15);
						\draw (13,-0.15) -- (13,0.15);
						\draw[snake=brace, mirror snake] (10.5,-0.2) -- (11.3,-0.2) 							[thick] node[below] {$s$};
						\draw[snake=brace, mirror snake] (9,-0.8) -- (12.3,-0.8) 								[thick] node[below] {$\alpha$};
						\draw (8.5,-0.6) -- (12.3,-0.6) node[anchor=west] {$a_{i}$};
						\draw (8.5,-0.75) -- (8.5,-0.45);
						\draw (12.3,-0.75) -- (12.3,-0.45);
						\draw (9,-1.2) -- (11.3,-1.2) node[anchor=west] {$r_{j}$};
						\draw (9,-1.05) -- (9,-1.35);
						\draw (11.3,-1.05) -- (11.3,-1.35)  ;
					\end{scope}
					\endpgfgraphicnamed
				\end{tikzpicture}
				\caption{$a_{i}$ appears on the join and $r_{j}$ is subword of $a_{i}$ except at the end.}
				\label{fig:non_reduced_rep_forb}
			\end{center}
		\end{figure}
		2) Now, suppose there exists a forbidden word on the join of $w$ and $r_{k}$, and let $a_{i}$ be the first such occurrence. That means, there exists $0 < t \in (a_{i}, r_{k})^{|r_{k}|}$ such that $w$ is counted in $f_{a_{i}}(n+t)$. These terms constitute the third term on the RHS of~\eqref{eq:non_reduced_step2_final}.
		Further, since the forbidden word $a_{i}$ on the join may contain some repeated word as its subword, we need to carefully handle the multiple counting of the words in such situations. As explained earlier, it is enough to look at the situations when $r_{j}$ is a subword of $a_{i}$, except at the end. There are only two possible positions where $r_{j}$ (as a subword of $a_{i}$) can appear in $w r_{k}$, which contributes extra to the counting. These are depicted in Figure~\ref{fig:non_reduced_rep_forb}. The figure on the left represents the situation when $r_{j}$ appears before the join, as a subword of $w$, and the figure on the right represents the situation when $r_{j}$ appears on the join.\\
		i) In the first one (figure on the left), $r_{j}$ appears before the join as a subword of $a_{i}$ except at the end, and also as a subword of $w$ if and only if there exists $\alpha \in (a_{i}, r_{j})$ such that $\alpha > |r_{j}|$ and $\alpha \ge t+|r_{j}|$, where $0 < t \in (a_{i}, r_{k})$ is fixed. In these cases, $r_{j}$ contributes $m_{j}$ to the multiplicity of $w$ in $f(n)$. Further, $w$ is counted in the third term on the RHS of~\eqref{eq:non_reduced_step2_final}, but $r_{j}$ as a subword of $a_{i}$ does not contribute to the counting of $w$ in $f_{a_{i}}(n+t)$. Therefore, to balance the counting, we need to add the term $(m_{j}-1)f_{a_{i}}(n+t)$ for appropriate choices of $m_{j}$, determined using $\alpha$. These terms are included in the last term on the RHS of~\eqref{eq:non_reduced_step2_final}. \\
		%
		%
		ii) In the second one (figure on the right), $r_{j}$ occurs on the join. This holds if and only if there exists $\alpha \in (a_{i}, r_{j}) $ such that $\alpha > |r_{j}|$ and $\alpha > t$. Note that $\alpha > t$ is implicit here, for if $\alpha \le t$, then $r_{j}$ is a subword of $r_{k}$, which is not allowed since $\R$ is a reduced collection. Let us suppose $w$ gets counted only once in $f(n)$. Moreover, $w$ gets counted $m_{j}-1$ times in $\frac{g_{r_{j}}(n+s)}{m_{j}}$ for a suitable choice of $0 < s \in (r_{j}, r_{k})$ and once in $f_{a_{i}}(n+t)$. In this case as well, we need to add $(m_{j}-1)f_{a_{i}}(n+t)$, which are included in the last term on the RHS of~\eqref{eq:non_reduced_step2_final} for appropriate choices of $\alpha$.
		
		These two ((i) and ii)) together give us: $\# \{ \alpha \in (a_{i}, r_{j}) : \alpha > |r_{j}| \text{ and } \alpha \ge t+|r_{j}|\} = \# \{ \alpha \in (a_{i}, r_{j}) : \alpha > |r_{j}| \} = \gamma(a_{i}, r_{j})$. Therefore the terms obtained from these two cases constitute the last term in the RHS of~\eqref{eq:non_reduced_step2_final}.
		
		Now,~\eqref{eq:non_reduced2} is obtained by multiplying~\eqref{eq:non_reduced_step2_final} by $z^{-n}$ and taking sum over $n \ge 0$.
		
		Finally, consider $wa_{k}$, where $w$ is counted in $f(n)$, and $a_{k} \in \mathcal{F}$. Using the arguments developed so far in this proof and the proof of Theorem~\ref{thm:reduced_gen_fun}, we can obtain an expression for $f(n)$ given by
		\begin{align}\label{eq:non_reduced_step3_final}
			f(n) & =  \sum\limits_{i=1}^{s} \ \sum\limits_{0 \,< \, t \, \in \, (a_{i}, a_{k})} f_{a_{i}}(n+t)\  -  \ \sum\limits_{j = 1}^{\ell} \ \sum\limits_{ 0 \, < \, s \, \in \, (r_{j}, a_{k})^{|r_{j}|-1}} \left( 1 - \frac{1}{m_{j}} \right) g_{r_{j}} (n+s) \nonumber \\
			& \  \qquad + \ \sum\limits_{i=1}^{s} \ \sum\limits_{0 \, < \, t \, \in \, (a_{i}, a_{k})} \ \left[  \sum\limits_{j = 1}^{\ell} (m_{j} -1) \, \gamma_{t}(a_{i}, r_{j})  \right] f_{a_{i}}(n+t),
		\end{align}
		%
		%
		where $\gamma_{t}(a_{i}, r_{j}) = \# \{ \alpha \in (a_{i}, r_{j}) : \alpha> \max \{|r_{j}|, t\}\} $. Here the condition on $\alpha$ is obtained as $\alpha> \max \{|r_{j}|, t\}$, for each fixed $0 < t \in (a_{i}, a_{k})$. Here $\alpha > t$ needs to be imposed explicitly unlike the previous case since, for $\alpha \le t$, $a_{k}$ can contain $r_{j}$ as a subword, however it does not contribute to the counting. To take care of this extra condition, we define $\gamma_{t}(a_{i}, r_{j})$ as above and obtain Equation \eqref{eq:non_reduced_step3_final}. Finally to complete the proof, we multiply the above equation by $z^{-n}$ and take sum over $n \ge 0$ to obtain~\eqref{eq:non_reduced3}.

	\end{proof}

%
	
\end{document}